\newcommand{\stkout}[1]{\ifmmode\text{\sout{\ensuremath{#1}}}\else\sout{#1}\fi}
\numberwithin{equation}{section}
\newtheorem{prop}{Proposition}
\newtheorem{lemma}[prop]{Lemma}
\newtheorem{thm}[prop]{Theorem}
\newtheorem{cor}[prop]{Corollary}
\numberwithin{prop}{section}
\theoremstyle{definition}
\newtheorem{rmk}[prop]{Remark}
\definecolor{c1}{rgb}{0.2,0.4,0.5}
\definecolor{c2}{rgb}{0.1,0.3,0.5}
\definecolor{c3}{rgb}{0.2,0.7,0.5}
\def \k {K\"ahler }
\newcommand{\oo}[1]{\overline{#1}}
\newcommand{\del}{\partial}
\newcommand{\bdel}{\bar{\partial}}
\newcommand{\gw}{\omega}
\newcommand{\ten}{\otimes}
\newcommand{\dbar}{\oo\partial}
\DeclareMathOperator{\Real}{Re}
\begin{document}

\title[Off-diagonal asymptotic  of Bergman kernels]{Off-diagonal asymptotic properties of Bergman kernels associated to analytic K\"ahler potentials}

\begin{abstract} We prove a new off-diagonal asymptotic of the Bergman kernels associated to tensor powers of a positive line bundle on a compact K\"ahler manifold. We show that if the K\"ahler potential is real analytic, then the Bergman kernel accepts a complete asymptotic expansion in a neighborhood of the diagonal of shrinking size $k^{-\frac14}$. These improve the earlier results in the subject for smooth potentials, where an expansion exists in a $k^{-\frac12}$ neighborhood of the diagonal. We obtain our results by finding upper bounds of the form $C^m m!^{2}$ for the Bergman coefficients $b_m(x, \bar y)$, which is an interesting problem on its own. We find such upper bounds using the method of \cite{BBS}. We also show that sharpening these upper bounds would improve the rate of shrinking neighborhoods of the diagonal $x=y$ in our results. In the special case of metrics with local constant holomorphic sectional curvatures, we obtain off-diagonal asymptotic in a fixed (as $k \to \infty$) neighborhood of the diagonal, which recovers a result of Berman \cite{Ber} (see Remark 3.5 of \cite{Ber} for higher dimensions). In this case, we also find an explicit formula for the Bergman kernel mod $O(e^{-k \delta} )$.

\end{abstract}


\author [Hezari]{Hamid Hezari}
\address{Department of Mathematics, UC Irvine, Irvine, CA 92617, USA} \email{\href{mailto:hezari@uci.edu}{hezari@uci.edu}}

\author[Lu]{Zhiqin Lu}
\address{Department of Mathematics, UC Irvine, Irvine, CA 92617, USA}
\email{\href{mailto:zlu@uci.edu}{zlu@uci.edu}}

\author [Xu]{Hang Xu}
\address{Department of Mathematics, Johns Hopkins University, Baltimore, MA 21218, USA}
\email{\href{mailto:hxu@math.jhu.edu}{hxu@math.jhu.edu}}

\maketitle

\section{Introduction}

Let $(L,h) \to M$ be a positive Hermitian holomorphic line bundle over a compact complex manifold of dimension $n$. The metric $h$ induces the \k form $\gw= -\tfrac{\sqrt{-1}}{2} \del \bdel \log(h)$ on $M$.  For $k$ in $\mathbb N$, let $H^0(M,L^k)$ denote the space of holomorphic sections of $L^k$. The {Bergman projection} is the orthogonal projection $\Pi_k: {L}^{2}(M,L^k) \to H^0(M,L^k)$ with respect to the natural inner product induced by the metric $h^k$ and the volume form $\frac{ \gw^n }{n!}$. The \emph{Bergman kernel} $K_k$, a section of $L^k\ten \bar{L^k}$, is the distribution kernel of $\Pi_k$.
Given $p \in M$,  let $(V, e_L)$ be a local trivialization of $L$ near $p$.   We write $| e_L |^2_{h}=e^{-\phi}$ and call $\phi$ a local \k potential. In the frame $ e_L^{k} \ten {\bar{e}_L^{k}}$, the Bergman kernel $K_k(x,y)$ is understood as a function on $V \times V$. We note that on the diagonal $x=y$, the function $K_k(x,x)e^{-k\phi(x)}$ is independent of the choice of the local frame, hence it is a globally defined function on $M$  called the \emph{Bergman function}, which is also equal to $| K_k(x, x)  |_{h^k}$. 

Zelditch \cite{Ze1} and Catlin \cite{Ca} proved that on the diagonal $x=y$, the Bergman kernel accepts a complete asymptotic expansion of the form
\begin{equation}\label{ZC}
K_k(x,x)e^{-k\phi(x)}\sim\frac{k^n}{\pi^n}\left(b_0(x,\bar{x})+\frac{b_1(x,\bar{x})}{k}+\frac{b_2(x,\bar{x})}{k^2}+\cdots\right).
\end{equation}
\textit{Near the diagonal}, i.e. in a $\frac{1}{\sqrt{k}}$-neighborhood of the diagonal, one has a scaling asymptotic expansion for the Bergman kernel (see \cite{ShZe, MaMaBook, MaMaOff, LuSh, HKSX}).  For $d(x, y) \gg \sqrt{\frac{\log k}{{k}}}$, where $d$ is the Riemannian distance induced by $\omega$, no useful asymptotics are known. However,  there are off-diagonal upper bounds of Agmon type 
\begin{equation}\label{Agmon} \left | K_k(x, y) \right |_{h^k}  \leq C k^n e^{- c \sqrt{k} d(x, y)}, \end{equation}
proved for smooth metrics by Ma-Marinescu \cite{MaMaAgmon}\footnote{Estimate \eqref{Agmon} was first stated in \cite{Bern}, which is analogous to earlier results \cite{Ch1, Del, Lindholm} in various settings.} In fact as shown by Christ in \cite{Ch3}, away from the diagonal one has better decay estimates. More precisely, for any $\delta>0$ there exist $K$ and a function $f(k) \to \infty$ as $k \to \infty$ such that\footnote{In \cite{LS}, similar decays are obtained  in the non-compact setting under the assumption of Ricci curvature being uniformly bounded from below. See also page 64 of \cite{Seto} for Agmon estimates in this setting.}
$$ \qquad  \left | K_k(x, y) \right |_{h^k} \leq C e^{- f(k) \sqrt{k \log k}}, \quad \text{for} \; d(x, y) > \delta \; \text{and} \; k >K. $$
When the metric $h$ is real analytic the much improved estimate 
$$ \left | K_k(x, y) \right |_{h^k} \leq C k^n e^{- c k d^2(x, y)},$$ holds\footnote{We were unable to find a proof of this estimate in the literature.} (see estimate 6.2 in Remark 6.6 of \cite{Ch}).  The goal of this article is to prove an asymptotic expansion in a $k^{-1/4}$ neighborhood of the diagonal in the real analytic case. In particular, we show that in the real analytic case, uniformly for all sequences $x_k$ and $y_k$ with $d(x_k, y_k) \leq k^{-1/4}$, we have
$$  \left | K_k(x_k, y_k) \right |_{h^k} \sim \frac{k^n}{\pi^n} e^{- \frac{k D(x_k, y_k)}{2}}, \quad \text{as} \quad k \to \infty,$$ where $D(x, y)$ is Calabi's diastasis function \eqref{Diastatis}, which is controlled from above and below by $d^2(x, y)$.  
Before we state the results we must also mention that in the literature\footnote{See for example page 214 of \cite{BBS}, page 66 of \cite{KS}, and page 28 of \cite{DLM}.  } there is an \textit{ill} off-diagonal asymptotic expansion for the Bergman kernel of the form 
\begin{equation}\label{ill}
K_k(x,y)=e^{k\psi(x,\bar y)}\frac{k^n}{\pi^n}\left (1+\sum _{j=1}^{ {N-1}}\frac{b_j(x, \bar y)}{k^j}\right )+e^{k\left(\frac{\phi(x)}{2}+\frac{\phi(y)}{2}\right)}k^{-N+ n}O_N(1),
\end{equation} 
which holds for all $d(x, y) \leq \delta$ for some $\delta >0$. Here, $\psi(x, \bar y)$ and $b_j(x, \bar y)$ are holomorphic extensions of $\phi(x)$ and $b_j(x, \bar x)$ from \eqref{ZC}. However, note that this expansion is only useful when the term $e^{k\left(\frac{\phi(x)}{2}+\frac{\phi(y)}{2}\right)}k^{-N+ n}$ is a true remainder term, i.e. it is less than the principal term $k^n e^{k \psi(x,\bar y)}$ in size, which holds only when $d(x, y) \leq {C_N} \sqrt{\frac{\log k}{{k}}}$. Thus, to obtain an asymptotic when  $d(x, y) \gg \sqrt{\frac{\log k}{{k}}}$, we need a quantitative description of  the term $O_N(1)$  and the coefficients $b_j$ in \eqref{ill} which would allow $N$ to depend on $k$ accordingly. 

We now state our main result and its corollaries. \begin{thm}\label{Main}
 Assume that the local \k potential $\phi$ is real analytic in $V$. Let $\psi(x, z)$ be the holomorphic extension of $\phi(x)$ near the diagonal obtained by \textit{polarization}, i.e., $\psi(x,z)$ is holomorphic and $\psi(x,\bar{x})=\phi(x)$. Also let $b_m(x , z)$ be the holomorphic extensions of the Bergman kernel coefficients $b_m(x, \bar x)$ in the on-diagonal expansion \eqref{ZC}.
Then there exist positive constants $\delta$ and $C$, and an open set $U \subset V$ containing $p$, such that for $N_0(k)=[(\frac{k}{C})^{\frac{1}{2}}]$ and uniformly for any $x,y\in U$, we have
\begin{equation*}
K_k(x,y)=e^{k\psi(x,\bar y)}\frac{k^n}{\pi^n}\left (1+\sum _{j=1}^{ N_0(k)-1}\frac{b_j(x, \bar y)}{k^j}\right )+e^{k\left(\frac{\phi(x)}{2}+\frac{\phi(y)}{2}\right)}e^{-\delta k^{\frac{1}{2}}}O(1). 
\end{equation*} 
Moreover, one can differentiate this expansion. More precisely,
\begin{equation*}
D^\alpha K_k(x,y)=D^\alpha\left(e^{k\psi(x,\bar y)}\frac{k^n}{\pi^n}\left (1+\sum _{j=1}^{ N_0(k)-1}\frac{b_j(x, \bar y)}{k^j}\right )\right)+k^{|\alpha|}\alpha! e^{k\left(\frac{\phi(x)}{2}+\frac{\phi(y)}{2}\right)}e^{-\delta k^{\frac{1}{2}}}O(1),
\end{equation*} 
for any differential operator $D^\alpha$  of order $\alpha$ with respect to $x$ and $y$.
\end{thm}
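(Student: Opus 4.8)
The plan is to control the remainder term $O_N(1)$ and the coefficients $b_j(x,\bar y)$ in the ill expansion \eqref{ill} quantitatively, so that $N$ may be taken to grow with $k$. The starting point is the observation, already highlighted in the introduction, that \eqref{ill} is useless once $d(x,y) \gg \sqrt{(\log k)/k}$ only because we lack control on how $O_N(1)$ depends on $N$. So the first task is to revisit the construction of the approximate Bergman kernel à la \cite{BBS}: one builds a local reproducing kernel by truncating the formal series $e^{k\psi(x,\bar y)}\sum_m b_m(x,\bar y) k^{-m}$ at order $N$, multiplies by a cutoff, and estimates the error produced both by the cutoff (exponentially small, of order $e^{-\delta k}$ away from the diagonal, harmless) and by the truncation tail. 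The truncation tail is governed precisely by the size of $b_N(x,\bar y)$, and this is where the promised bound $|b_m(x,\bar y)| \leq C^m m!^2$ enters. Granting that bound (stated in the abstract as the technical heart of the paper, proved by the method of \cite{BBS}), the tail contributes roughly $C^N N!^2 k^{-N}$, which by Stirling is minimized around $N \asymp \sqrt{k/C}$ and there has size $e^{-\delta\sqrt k}$ for a suitable $\delta > 0$. This is exactly the threshold $N_0(k) = [(k/C)^{1/2}]$ in the statement.

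Concretely, I would proceed as follows. First, fix the real-analytic potential $\phi$ on $V$ and, by polarization, obtain the holomorphic $\psi(x,z)$ with $\psi(x,\bar x)=\phi(x)$ on a possibly smaller neighborhood; likewise extend each $b_m(x,\bar x)$ to a holomorphic $b_m(x,z)$, using that the $b_m$ are universal real-analytic polynomials in the metric and its derivatives. Second, invoke the coefficient bound $|b_m(x,z)| \leq C^m m!^2$ uniformly on a fixed polydisc $U \subset V$ containing $p$ — this is the key input and the main obstacle, since it requires a careful inductive analysis of the recursion defining the $b_m$ together with Cauchy estimates to pass from real-analyticity to explicit factorial growth. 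Third, form the local approximate kernel
\[
K_k^{N}(x,y) = e^{k\psi(x,\bar y)} \frac{k^n}{\pi^n}\Bigl(1 + \sum_{j=1}^{N-1} \frac{b_j(x,\bar y)}{k^j}\Bigr)\chi(x,y),
\]
with $\chi$ a cutoff equal to $1$ near the diagonal, and show via the Kuranishi-type / $\bar\partial$ argument of \cite{BBS} that $\Pi_k K_k^N = K_k^N + R_k^N$ where, on $U \times U$,
\[
|R_k^N(x,y)| \leq e^{k(\phi(x)/2 + \phi(y)/2)}\Bigl( C^N N!^2 k^{n-N} + C k^n e^{-\delta k}\Bigr).
\]
Fourth, since $\Pi_k$ acts as the identity on the honest Bergman kernel, a standard reproducing-kernel estimate (bounding $\|\Pi_k(K_k - K_k^N)\|$ in terms of $\|R_k^N\|$ plus the exponentially small cutoff error, as in \cite{BBS, Ber}) upgrades this to the pointwise bound $|K_k(x,y) - K_k^N(x,y)| \leq e^{k(\phi(x)/2+\phi(y)/2)}(C^N N!^2 k^{n-N} + C k^n e^{-\delta k})$ on a slightly shrunk $U$. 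Fifth, set $N = N_0(k) = [(k/C)^{1/2}]$; Stirling gives $C^N N!^2 k^{-N} = \exp(2N\log N - N\log k + O(N)) = \exp(-\delta' k^{1/2})$ for suitable $\delta' > 0$, absorbing the polynomial $k^n$ and the $e^{-\delta k}$ term, which yields the stated expansion with remainder $e^{k(\phi(x)/2+\phi(y)/2)} e^{-\delta k^{1/2}} O(1)$.

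For the differentiated statement, I would use the Cauchy integral formula on polydiscs: both $K_k(x,y)$ and $K_k^{N_0(k)}(x,y)$ are holomorphic in $x$ and anti-holomorphic in $y$ (the kernel of a projection onto holomorphic sections is), so $D^\alpha$ of their difference at a point is an average of the difference over a circle of radius comparable to the distance to $\partial U$, picking up a factor $\alpha!\, r^{-|\alpha|}$. Since the radius $r$ can be taken a fixed fraction of the polydisc size and the pre-factor $e^{k\psi}$ varies by a bounded multiplicative factor over that circle, one gets the claimed $k^{|\alpha|}\alpha!\, e^{k(\phi(x)/2+\phi(y)/2)} e^{-\delta k^{1/2}} O(1)$ after shrinking $\delta$ slightly (the $k^{|\alpha|}$ coming from differentiating $e^{k\psi}$). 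The only subtlety is that $\psi(x,\bar y)$ is not globally holomorphic in all variables, but it is holomorphic in $x$ and in $\bar y$ separately on $U \times U$, which is all the Cauchy estimate needs; one handles mixed derivatives by iterating in each variable.

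I expect the main obstacle to be, by a wide margin, establishing the factorial bound $|b_m(x,z)| \leq C^m m!^2$ — everything downstream is a fairly mechanical packaging of the \cite{BBS} construction with an optimized choice of truncation order. The bound requires tracking constants through the recursive PDEs for the $b_m$ (or through the integral/Neumann-series representation in \cite{BBS}), and the appearance of $m!^2$ rather than $m!$ is what forces the $k^{-1/4}$ (rather than $k^{-1/2}$) size of the neighborhood via $d(x,y)^2 \lesssim D(x,y) \lesssim 1/\sqrt k$; any improvement of the exponent $2$ in $m!^2$ would directly enlarge the neighborhood, as remarked in the abstract.
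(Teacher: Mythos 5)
Your proposal is correct and follows essentially the same route as the paper: truncate the Berman--Berndtsson--Sj\"ostrand local kernel at order $N$, feed the bound $\|b_m\|_{L^\infty(U\times U)}\le C^m m!^2$ (the paper's Theorem \ref{MainLemma}) through the amplitude estimates to get a local reproducing error of size $C^N N!^2 k^{-N}$, pass to the global kernel by H\"ormander's $\bar\partial$-estimate, and optimize $N=N_0(k)=[(k/C)^{1/2}]$ via Stirling. The one detail to adjust is in the differentiated statement: the paper applies the Cauchy integral formula on a polydisc of radius $1/k$ rather than a fixed fraction of $U$, since only over such a small contour does $e^{k(\phi(x)/2+\phi(y)/2)}$ vary by a bounded factor, and this choice is precisely what yields the factor $k^{|\alpha|}\alpha!$.
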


As a first corollary of this theorem, we get a complete asymptotic expansion in a $k^{-\frac14}$ neighborhood of the diagonal.

\begin{cor}\label{complete asymptotics}
Given the same assumptions and notations as in the above theorem, there exist positive  constants $C$ and $\delta$, and an open set $U \subset V$ containing $p$, such that for all $k$ and $N\in \mathbb{N}$, we have for all $x,y\in U$ satisfying $d(x,y)\leq \delta k^{-\frac{1}{4}}$,
\begin{equation*}
K_k(x,y)=e^{-k\psi(x,\bar{y})}\frac{k^n}{\pi^n}\left(1+\sum_{j=1}^{{N-1}}\frac{b_j(x,\bar{y})}{k^j}+\mathcal{R}_N(x, \bar y,k)\right),
\end{equation*}
where 
\begin{equation*}
	\left|\mathcal{R}_N(x, \bar y ,k)\right|\leq {\frac{C^{N}N!^2}{k^N}}.
\end{equation*}
\end{cor}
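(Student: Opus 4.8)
The plan is to deduce Corollary~\ref{complete asymptotics} directly from Theorem~\ref{Main}, together with the bounds $|b_j(x,\bar y)|\le C^j j!^2$ on the Bergman coefficients proved elsewhere in this paper. Starting from the conclusion of Theorem~\ref{Main} and dividing through by the principal term $e^{k\psi(x,\bar y)}\tfrac{k^n}{\pi^n}$, one obtains
\[
K_k(x,y)\,e^{-k\psi(x,\bar y)}\,\frac{\pi^n}{k^n}=1+\sum_{j=1}^{N_0(k)-1}\frac{b_j(x,\bar y)}{k^j}+\rho_k(x,\bar y),
\]
where $\rho_k$ is the remainder of Theorem~\ref{Main} divided by that same principal term. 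The corollary then reduces to two elementary estimates: (i) $|\rho_k|\le C^N N!^2/k^N$ for every $N$, provided $d(x,y)\le\delta k^{-1/4}$; and (ii) replacing the $N_0(k)$-term partial sum by the $N$-term partial sum alters the remainder by at most the same order. Intersecting the relevant open neighborhoods and enlarging $C$ at the very end yields the $U$ and $C$ of the statement.

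For (i): the only thing preventing $e^{k(\phi(x)/2+\phi(y)/2)}e^{-\delta_0 k^{1/2}}$ from being negligible against $|e^{k\psi(x,\bar y)}|\,k^n$ is the difference between $\tfrac{\phi(x)+\phi(y)}{2}$ and $\Real\psi(x,\bar y)$ (here $\delta_0$ denotes the constant of Theorem~\ref{Main}, to avoid clashing with the $\delta$ of the corollary). Since $\phi$ is real analytic, its polarization satisfies $\psi(y,\bar x)=\overline{\psi(x,\bar y)}$; hence from the formula $D(x,y)=\psi(x,\bar x)+\psi(y,\bar y)-\psi(x,\bar y)-\psi(y,\bar x)$ for Calabi's diastasis we get exactly $\tfrac{\phi(x)+\phi(y)}{2}-\Real\psi(x,\bar y)=\tfrac12 D(x,y)$, so $e^{k(\phi(x)/2+\phi(y)/2)}=|e^{k\psi(x,\bar y)}|\,e^{kD(x,y)/2}$. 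Using the upper bound $D(x,y)\le C_0\,d^2(x,y)$ and the hypothesis $d(x,y)\le\delta k^{-1/4}$ gives $e^{kD(x,y)/2}\le e^{C_0\delta^2 k^{1/2}/2}$, and choosing $\delta$ small enough that $C_0\delta^2/2\le\tfrac12\delta_0$ yields $\rho_k=O\!\left(k^{-n}e^{-\delta_0 k^{1/2}/2}\right)$. Finally, the one-variable extremization $\max_{k>0}k^N e^{-\delta_0 k^{1/2}/2}=\left(\tfrac{4N}{\delta_0 e}\right)^{2N}$, combined with $N!^2\ge(N/e)^{2N}$, shows $|\rho_k|\le C^N N!^2/k^N$ for all $N$ and all $k$ with one fixed constant $C$.

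For (ii): set $\mathcal R_N(x,\bar y,k)=\rho_k(x,\bar y)+\sum_{j=N}^{N_0(k)-1}\tfrac{b_j(x,\bar y)}{k^j}$ when $N\le N_0(k)$, and $\mathcal R_N(x,\bar y,k)=\rho_k(x,\bar y)-\sum_{j=N_0(k)}^{N-1}\tfrac{b_j(x,\bar y)}{k^j}$ when $N>N_0(k)$. By $|b_j(x,\bar y)|\le C^j j!^2$, the ratio of the $(j+1)$st to the $j$th term of $\sum_j C^j j!^2 k^{-j}$ is $C(j+1)^2/k$, which is $<1$ exactly when $j+1\le N_0(k)$ and is $\le\tfrac14$ on a slightly shorter initial range. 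Hence for $N\le N_0(k)$ the finite sum $\sum_{j=N}^{N_0(k)-1}C^j j!^2 k^{-j}$ splits into a geometrically decaying part bounded by a small multiple of its leading term $C^N N!^2/k^N$, plus the finitely many borderline terms with $j$ within a fixed factor of $N_0(k)$, which are already of order $e^{-c k^{1/2}}$ by Stirling and get absorbed exactly as in (i). In the complementary regime $N>N_0(k)$ one has $k\lesssim N^2$, so only at most $N$ extra terms appear, each at most $C^{N-1}(N-1)!^2 k^{-(N-1)}\lesssim C^N N!^2 k^{-N}$ once $C$ is enlarged. Combining all of this gives $|\mathcal R_N|\le C^N N!^2/k^N$, as claimed.

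The real content sits upstream — in Theorem~\ref{Main} and in the $C^m m!^2$ bounds for $b_m$ — so the corollary is essentially bookkeeping. The one point that genuinely needs care is the uniformity of the constant $C$ in $C^N N!^2/k^N$ \emph{simultaneously} in $N$ and $k$, in particular across the crossover $N\sim k^{1/2}$ where the partial sum of Theorem~\ref{Main} runs out of terms; the extremization in (i) and the case split in (ii) are precisely what make this uniformity work.
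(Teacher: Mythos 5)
Your proof is correct and follows essentially the same route as the paper: rewrite via Theorem~\ref{Main}, control the tail $\sum_{j\geq N}b_j/k^j$ using the monotonicity of $C^j j!^2/k^j$ up to $j\approx N_0(k)$, and convert the exponential remainder to the form $C^N N!^2/k^N$ via the diastasis bound $D(x,y)\lesssim d^2(x,y)$ together with the one-variable extremization of $k^N e^{-ck^{1/2}}$. The only substantive difference is that you treat the regime $N>N_0(k)$ explicitly (by subtracting the extra terms and using $k\lesssim N^2$), a case the paper's written proof leaves implicit.
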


As another corollary, we obtain the following off-diagonal asymptotic in terms of Calabi's diastasis \cite{Cal} function defined by
\begin{equation}\label{Diastatis} 
D(x, y)= \phi(x) + \phi(y) - \psi(x, \bar y) - \psi(y, \bar x).  
\end{equation} We point out that near a given point $p \in M$, we have  $D(x, y) = |x -y|_p^2 + O(|x-p|_p^3+|y-p|_p^3)$, where $|z|^2_p:= \sum_{ i, j=1}^n \phi_{i \bar j}(p)z_i\oo{z_j }$. If we use \textit{Bochner coordinates} at $p$ (introduced in \cite{Boc}), in which the \k potential admits the form $\phi(x)=|x|^2+O(|x|^4)$, we have $D(x, y) = |x -y|_p^2 + O(|x-p|_p^4+|y-p|_p^4)$. 
\begin{cor}\label{Cor1}
Under the same assumptions and notations (and the same $\delta$ and same $U$) as in Theorem \ref{Main},  we have uniformly for all $x , y \in U$ satisfying $ D(x, y) \leq  \frac12 \delta k^{-\frac12}$,
\begin{equation}\label{Log}
\frac{1}{k}\log \left | K_k (  x, y ) \right | _{h^k}=-\frac{D(x, y)}{2} + \frac{n \log k}{k} -\frac{n\log\pi}{k}+ O \left (\frac{1}{k^2} \right ).
\end{equation}
\end{cor}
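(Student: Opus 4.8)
The plan is to apply Corollary \ref{complete asymptotics} with $N=2$ to extract the leading and first-order behavior of $K_k(x,y)$, and then take logarithms. First I would note that the hypothesis $D(x,y)\le \tfrac12\delta k^{-1/2}$, together with the comparability $D(x,y)\asymp d^2(x,y)$ recorded after \eqref{Diastatis}, guarantees (after shrinking $\delta$ if necessary) that $d(x,y)\le \delta k^{-1/4}$, so Corollary \ref{complete asymptotics} indeed applies on this range of $(x,y)$. With $N=2$ it gives
\begin{equation*}
K_k(x,y)=e^{-k\psi(x,\bar y)}\frac{k^n}{\pi^n}\Bigl(1+\frac{b_1(x,\bar y)}{k}+\mathcal R_2(x,\bar y,k)\Bigr),\qquad |\mathcal R_2|\le \frac{C^2\,2!^2}{k^2}=O\!\left(\tfrac1{k^2}\right).
\end{equation*}

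Next I would pass to $|K_k(x,y)|_{h^k}$. By definition of the pointwise norm on $L^k\otimes\overline{L^k}$ in the frame $e_L^k\otimes\bar e_L^k$, we have $|K_k(x,y)|_{h^k}=|K_k(x,y)|\,e^{-\frac k2(\phi(x)+\phi(y))}$. Combining this with the expansion above and the definition \eqref{Diastatis} of the diastasis — rewriting $e^{-k\,\Real\psi(x,\bar y)}e^{-\frac k2(\phi(x)+\phi(y))}$ in terms of $D(x,y)$, using that $\psi(y,\bar x)=\overline{\psi(x,\bar y)}$ so that $2\Real\psi(x,\bar y)=\psi(x,\bar y)+\psi(y,\bar x)=\phi(x)+\phi(y)-D(x,y)$ — I obtain
\begin{equation*}
|K_k(x,y)|_{h^k}=\frac{k^n}{\pi^n}\,e^{-\frac{k D(x,y)}{2}}\,\Bigl|\,1+\frac{b_1(x,\bar y)}{k}+\mathcal R_2(x,\bar y,k)\,\Bigr|.
\end{equation*}
Taking $\tfrac1k\log$ of both sides yields the three explicit terms $-\tfrac{D(x,y)}2$, $\tfrac{n\log k}{k}$, $-\tfrac{n\log\pi}{k}$ immediately. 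It remains to show $\tfrac1k\log\bigl|1+\tfrac{b_1(x,\bar y)}{k}+\mathcal R_2\bigr|=O(k^{-2})$; for this I would use that $b_1(x,\bar y)$ is bounded on the (relatively compact, after shrinking) set $U$ — being a holomorphic extension of the smooth coefficient $b_1(x,\bar x)$ — so the quantity inside the absolute value is $1+O(k^{-1})$, whence its logarithm is $O(k^{-1})$ and $\tfrac1k$ times it is $O(k^{-2})$. One should double-check that the implied constant is uniform in $x,y\in U$, which follows since both the bound on $b_1$ and the bound on $\mathcal R_2$ from Corollary \ref{complete asymptotics} are uniform there.

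The only mild subtlety — and the step I would treat most carefully — is the bookkeeping in the exponent: one must make sure the purely imaginary part of $\psi(x,\bar y)$ does not interfere, which is exactly why one takes the absolute value $|K_k|_{h^k}$ rather than $K_k$ itself, and why the real combination $2\Real\psi(x,\bar y)$ is what appears and cleanly produces $D(x,y)$. There is no analytic hard part here beyond Corollary \ref{complete asymptotics}; the corollary is essentially a clean algebraic consequence of the $N=2$ case together with the elementary estimate $\log(1+t)=O(t)$ for small $t$. If one wanted the error term sharper than $O(k^{-2})$ one would need more terms $b_j$, but $N=2$ suffices for the stated result.
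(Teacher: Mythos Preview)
Your approach is correct and is a slightly more economical variant of the paper's own argument. The paper works directly from Theorem~\ref{Main}: it keeps the full sum $\sum_{j=1}^{N_0-1} b_j/k^j$ together with the $e^{-\delta k^{1/2}}$ remainder, and then separately shows (using Theorem~\ref{MainLemma} and the monotonicity of $C^j j^{2j}/(e^{2j}k^j)$ on $[2,N_0-1]$) that this whole sum is $O(1/k)$. You instead invoke Corollary~\ref{complete asymptotics} with $N=2$, which has already absorbed that tail estimate into $\mathcal R_2=O(k^{-2})$; this lets you conclude with nothing more than boundedness of $b_1$ on $U$. Both routes are the same in spirit, but yours packages the work more cleanly.

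Two small points to tidy up. First, there is a sign slip you inherited from a typo in the paper's statement of Corollary~\ref{complete asymptotics}: the prefactor should be $e^{k\psi(x,\bar y)}$ (as in Theorem~\ref{Main}), not $e^{-k\psi(x,\bar y)}$. With the correct sign, $|K_k|_{h^k}$ carries the exponent $k\Real\psi-\tfrac{k}{2}(\phi(x)+\phi(y))=-\tfrac{k}{2}D(x,y)$, which is the formula you state; as written, your intermediate expression $e^{-k\Real\psi}e^{-\frac{k}{2}(\phi+\phi)}$ does not actually equal $e^{-kD/2}$. Second, the phrase ``after shrinking $\delta$ if necessary'' conflicts with the statement's insistence on ``the same $\delta$'' as in Theorem~\ref{Main}; you should check directly that $D(x,y)\le\tfrac12\delta k^{-1/2}$ already forces $d(x,y)\le \delta' k^{-1/4}$ for a $\delta'$ no larger than the $\delta$ appearing in Corollary~\ref{complete asymptotics} (tracing the proof of that corollary shows the relevant constraint is exactly $D\le\tfrac12\delta k^{-1/2}$, so this is fine).
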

The following scaling asymptotic is then immediate:
\begin{cor}\label{Cor2} In Bochner coordinates at $p$, we have uniformly for all $u,v \in \mathbb{C}^n$ with $| u|_p$ and $|v|_{p} < \frac{\sqrt{\delta}}{3}$,
\begin{equation*}
\frac{1}{k^{\frac12}}\log \left | K_k \left (\frac{u}{k^{\frac14}},\frac{v}{k^{\frac14} }\right ) \right |_{h^k}=-\frac{|u-v|_p^2}{2} + \frac{n \log k}{k^{\frac12}}-\frac{n\log\pi}{k^{\frac{1}{2}}} + O \left ( \frac{1}{k^\frac32}\right ).
\end{equation*}
\end{cor}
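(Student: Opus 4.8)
The plan is to obtain Corollary~\ref{Cor2} as a direct rescaling of Corollary~\ref{Cor1}. Fix $u,v\in\mathbb{C}^n$ with $|u|_p,|v|_p<\sqrt\delta/3$ and set $x_k=u/k^{1/4}$, $y_k=v/k^{1/4}$. The argument will consist of three routine steps: (i) verify that $(x_k,y_k)$ lies in $U$ and in the admissible range $D(x_k,y_k)\le\tfrac12\delta k^{-1/2}$ of Corollary~\ref{Cor1} once $k$ is large; (ii) apply Corollary~\ref{Cor1} to $(x_k,y_k)$; and (iii) multiply by $k^{1/2}$ and substitute the Taylor expansion of the diastasis at $p$ to identify the leading term. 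No analysis beyond Corollary~\ref{Cor1} is needed.

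For step (i), since $x_k,y_k\to p$ and $U$ is open, $x_k,y_k\in U$ for all $k\ge k_0$ with $k_0$ independent of $u,v$. Using the Bochner-coordinate expansion $D(x,y)=|x-y|_p^2+O(|x-p|_p^4+|y-p|_p^4)$ recorded before Corollary~\ref{Cor1}, together with the fact that $p$ is the origin in these coordinates, one gets
\[
D(x_k,y_k)=\frac{|u-v|_p^2}{k^{1/2}}+O\!\Big(\frac{|u|_p^4+|v|_p^4}{k}\Big),
\]
uniformly over $u,v$ in the given ball. Since $|u-v|_p\le|u|_p+|v|_p<\tfrac23\sqrt\delta$, we have $|u-v|_p^2<\tfrac49\delta$, which is strictly below $\tfrac12\delta$; hence, after possibly enlarging $k_0$ (still uniformly in $u,v$, because the remainder above is uniform), we obtain $D(x_k,y_k)\le\tfrac12\delta k^{-1/2}$ for all $k\ge k_0$. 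Thus Corollary~\ref{Cor1} applies to $(x_k,y_k)$ uniformly in $u,v$; in particular $K_k(x_k,y_k)\neq0$, so the left-hand side of the assertion makes sense.

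For steps (ii)--(iii), multiplying the identity of Corollary~\ref{Cor1} by $k^{1/2}$ gives
\[
\frac{1}{k^{1/2}}\log\big|K_k(x_k,y_k)\big|_{h^k}=-\frac{k^{1/2}D(x_k,y_k)}{2}+\frac{n\log k}{k^{1/2}}-\frac{n\log\pi}{k^{1/2}}+O\!\big(k^{-3/2}\big),
\]
and inserting the expansion of $D(x_k,y_k)$ from step (i) turns $-\tfrac12 k^{1/2}D(x_k,y_k)$ into $-\tfrac12|u-v|_p^2$ plus a remainder governed by the rescaled higher-order Taylor terms of the diastasis at $p$ (which in Bochner coordinates start only at fourth order). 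Collecting these error contributions with the $O(k^{-2})$ remainder of Corollary~\ref{Cor1} scaled by $k^{1/2}$ leaves the leading term $-\tfrac12|u-v|_p^2$, the logarithmic corrections $\tfrac{n\log k}{k^{1/2}}-\tfrac{n\log\pi}{k^{1/2}}$, and the claimed error, uniformly over $|u|_p,|v|_p<\sqrt\delta/3$.

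I do not expect a genuine obstacle here; the only care needed is uniformity. One must check that the threshold $k_0$ beyond which the rescaled points are admissible, and every implied constant in the $O(\cdot)$ along the way, can be chosen independently of $u,v$ over the fixed ball $\{|u|_p,|v|_p<\sqrt\delta/3\}$. This is precisely what the slightly reduced radius $\sqrt\delta/3$ buys: it forces $|u-v|_p^2<\tfrac49\delta$, leaving a fixed positive gap below the threshold $\tfrac12\delta$ of Corollary~\ref{Cor1} into which the $O(k^{-1})$ error of $D(x_k,y_k)$ fits for all large $k$. The real-analytic hypothesis enters only through Corollary~\ref{Cor1} and through the real-analyticity of the diastasis, which is what makes its Taylor remainder at $p$ --- in Bochner coordinates --- of fourth order in $|x-p|_p$ and $|y-p|_p$.
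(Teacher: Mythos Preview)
Your approach is exactly the paper's: the paper says the corollary is ``immediate'' from Corollary~\ref{Cor1}, and your steps (i)--(iii) spell out precisely that substitution and rescaling. The uniformity discussion is appropriate and the use of the reduced radius $\sqrt{\delta}/3$ to guarantee $|u-v|_p^2<\tfrac{4}{9}\delta<\tfrac12\delta$ is the right idea.

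There is, however, one point where your write-up (and, in fact, the stated error in the corollary) does not survive scrutiny. In step~(iii) you say the diastasis remainder ``leaves \dots the claimed error'' $O(k^{-3/2})$, but your own expansion from step~(i) gives
\[
k^{1/2}D(x_k,y_k)=|u-v|_p^2+k^{1/2}\cdot O\!\Big(\frac{|u|_p^4+|v|_p^4}{k}\Big)=|u-v|_p^2+O\big(k^{-1/2}\big),
\]
so the contribution of the fourth-order Taylor remainder of $D$ is $O(k^{-1/2})$, not $O(k^{-3/2})$. The $O(k^{-2})$ from Corollary~\ref{Cor1}, after multiplying by $k^{1/2}$, indeed becomes $O(k^{-3/2})$, but it is dominated by the $O(k^{-1/2})$ coming from the diastasis. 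Thus the argument as written proves the asymptotic only with error $O(k^{-1/2})$. Since the paper offers no proof beyond ``immediate'', this appears to be a misprint in the stated remainder rather than a flaw specific to your method; but you should not claim $O(k^{-3/2})$ without further justification.
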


One of the key ingredients in our proofs is the following estimate on the Bergman kernel coefficients $b_m(x, z)$. We emphasize again that $b_m(x , z)$ are the holomorphic extensions of the Bergman kernel coefficients $b_m(x, \bar x)$ appearing in the on-diagonal expansion \eqref{ZC} of Zelditch \cite{Ze1} and Catlin \cite{Ca}.
\begin{thm}\label{MainLemma}
Assume the \k potential $\phi$ is real analytic in some neighborhood $V$ of $p$. Then, there exists a neighborhood $U\subset V$ of $p$, such that for any $m \in \mathbb N$ we have
\begin{equation*}
\|b_m(x,z)\|_{L^\infty(U\times U)} \leq C^{m}m!^{2},
\end{equation*}
where $C$ is a constant independent of $m$. 
\end{thm}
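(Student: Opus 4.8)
The plan is to follow the approach of Berman--Berndtsson--Sjöstrand \cite{BBS}, exploiting the fact that the Bergman coefficients $b_m$ arise as the coefficients in a formal solution of a recursive system (the Tian--Yau--Zelditch/Lu recursion) driven by the Kähler potential $\phi$. Since $\phi$ is real analytic near $p$, after complexifying it admits a holomorphic extension $\psi(x,z)$ on a polydisc of some fixed polyradius $r_0$ around $(p,\bar p)$, and on a slightly smaller polydisc we have a uniform bound $|\psi(x,z)| \le A$. The strategy is to produce, by induction on $m$, bounds of the form $\|b_m\|_{L^\infty(U_m \times U_m)} \le C^m m!^2$ on a \emph{nested} sequence of polydiscs $U_0 \supset U_1 \supset \cdots$ whose radii shrink geometrically, say $r_m = r_0(1 - \theta \sum_{j \le m} 2^{-j})$, so that they all contain a fixed neighborhood $U$ of $p$. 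The geometric shrinking of the domains is what converts the loss of derivatives at each recursive step into the clean factorial bound; this is the standard Cauchy-estimate mechanism behind analyticity proofs, but here we must track constants carefully enough to get $m!^2$ rather than, say, $(m!)^{\text{const}}$ or worse.

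The key technical steps, in order, would be: (1) Write down explicitly the recursion satisfied by $b_m(x,\bar x)$ — in the Bochner/Kähler normal coordinate framework this expresses $b_m$ in terms of $b_0, \dots, b_{m-1}$, the Laplacian $\Delta_\phi$, and derivatives of $\phi$, together with the solution operator of a model $\bar\partial$-type or Laplace-type equation; then polarize to get the relation for the holomorphic extensions $b_m(x,z)$. (2) Estimate the two ``inputs'': the analytic data, i.e. Cauchy estimates $\|D^\alpha \psi\|_{L^\infty} \le A\,\alpha!\,R^{-|\alpha|}$ on the nested discs, and the boundedness of the solution operator — one needs that inverting the relevant operator loses only a controlled number of derivatives and that, measured against the shrinking of the domain, this costs a factor like $C\, m^2$ at step $m$ (the square is the source of the $m!^2$ rather than $m!$, and reflects that the recursion involves a second-order operator applied to the previous coefficient). (3) Run the induction: assuming $\|b_j\|_{L^\infty(U_j \times U_j)} \le C^j j!^2$ for $j < m$, plug into the recursion, apply Cauchy estimates to pass from $U_{m-1}$ to $U_m$ (picking up $(r_{m-1} - r_m)^{-2} \sim (2^m/\theta r_0)^2$), bound the combinatorial sum over the ways $b_m$ depends on lower $b_j$'s and derivatives of $\psi$, and check that for $C$ chosen large enough (depending only on $n$, $A$, $r_0$, $\theta$) the resulting bound closes as $C^m m!^2$. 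Then set $U = \bigcap_m U_m$, which is a fixed polydisc of radius $r_0(1-\theta)$.

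I expect the main obstacle to be step (2)–(3): namely, carrying out the bookkeeping of the recursion so that the per-step loss is genuinely only $O(m^2)$ and the combinatorial sums (over partitions / multi-indices appearing when a differential operator hits a product of previously-bounded factors) are absorbed into the constant $C^m$ rather than degrading the factorial power. In particular one must confirm that Lu's closed formula (or the Bergman recursion as organized in \cite{BBS}) does not secretly involve, at stage $m$, derivatives of $\phi$ of order growing faster than linearly in $m$ in an uncontrolled way; a careful inductive formulation — perhaps bounding a weighted seminorm $\sup_{|\alpha| \le L} \|D^\alpha b_m\|_{L^\infty(U_m)} \rho^{|\alpha|}$ rather than just the sup norm, so that the Cauchy estimates are built into the induction hypothesis — should make this tractable. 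Everything else (holomorphic extension by polarization, existence of the recursion, boundedness of the model solution operator on polydiscs) is either standard or quoted from \cite{Ze1, Ca, BBS}.
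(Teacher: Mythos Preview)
Your primary approach---shrinking domains with geometrically decaying gaps---contains a concrete arithmetic gap that prevents the induction from closing. You correctly compute that passing from $U_{m-1}$ to $U_m$ via Cauchy costs $(r_{m-1}-r_m)^{-2}\sim (2^m/\theta r_0)^2$, but this is a per-step factor of order $4^m$, and iterating from $b_0$ to $b_m$ therefore accumulates a factor $\prod_{j\le m}4^j=4^{m(m+1)/2}\sim 2^{m^2}$, not $(m!)^2\sim e^{2m\log m}$. No choice of $C$ absorbs this: requiring $C^{m-1}(m-1)!^2\cdot 4^m\le C^m m!^2$ forces $4^m\le C m^2$, which fails for large $m$. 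More structurally, for the intersection $\bigcap U_m$ to have nonempty interior the gaps $r_{m-1}-r_m$ must be summable, hence eventually $o(1/m)$, so the Cauchy cost for two derivatives eventually exceeds $m^2$; a shrinking-domain scheme of this type cannot deliver $(m!)^2$ for a recursion that loses two derivatives per step.

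The paper avoids this obstruction by working on a \emph{fixed} neighborhood $U$ throughout and never applying Cauchy estimates to the $b_j$'s. Instead it strengthens the induction hypothesis to control all $z$-derivatives simultaneously: with $b_{m,\xi}:=\|D_z^\xi b_m\|_{L^\infty(U\times U)}$, one first expands the BBS recursion $b_m=-\sum_{l=1}^m\frac{(D_y\cdot D_\theta)^l}{l!}(b_{m-l}\Delta_0)|_{y=x}$ via a Fa\`a di Bruno-type computation to obtain a recursive inequality expressing $b_{m,\xi}$ in terms of $b_{m-l,\gamma+\xi_0}$ with $|\gamma|\le 2l$ (Lemma~\ref{bmLemmaAnalytic}); the Cauchy estimates enter only here, and only for the fixed analytic data $z(x,y,\theta)$, $\Delta_0$, $\psi_x$. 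One then proves by induction on $m$ the bound
\[
\frac{b_{m,\xi}}{(2m+1)!\,\xi!}\le \binom{2m+|\xi|}{|\xi|}A^m(2C)^{|\xi|},
\]
which at $\xi=0$ gives $b_{m,0}\le (2m+1)!A^m\le (64A)^m(m!)^2$. The binomial factor $\binom{2m+|\xi|}{|\xi|}$ and the normalization by $(2m+1)!\,\xi!$ are exactly what make the combinatorial sums close. Your final parenthetical---tracking a weighted seminorm over all $D^\alpha b_m$---is therefore the right instinct, but the weight must be $\xi!$ coupled to $m$ through that binomial coefficient, not merely $\rho^{|\alpha|}$; and the induction must run on the fixed domain, not a shrinking one.
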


\begin{rmk} We conjecture that in the real analytic case
\begin{equation}\label{conj}
\|b_m(x,z)\|_{L^\infty(U\times U)} \leq C^{m}m! \, .
\end{equation}
As we show in this paper, if this conjecture holds true, then all of the above results can be improved accordingly. In particular, the quantities $N_0(k)= (k /C)^\frac12$ and $e^{- \delta k^\frac12}$ in the remainder estimate of Theorem \ref{Main} would be replaced by $k/C$ and $e^{- \delta k}$, moreover Corollary \ref{Log} would hold for all $D(x, y) \leq \frac{\delta}{2}$.  However we are unable to prove this conjecture for general real analytic \k metrics using our method, which is based  on a recursive formula of \cite{BBS}. In Section \ref{optimal}, we discuss the optimality and limitations of this method. One can also check, with a bit of more effort, that the recursive formula of \cite{Loi} (hence equivalently the method of \cite{Xu}) would also provide the same estimates as our Theorem \ref{MainLemma}. In addition, in Theorem \ref{Transport}, we find a parametrix representation of the Bergman kernel by means of transport equations similar to those for the wave equation, but as we discuss in Remark \ref{FinalRemark}, this method would also give the bounds $C^m (m!)^2$. Another way to explore this conjecture is to use the method of peak sections. It would be interesting to confirm the bounds $C^m (m!)^2$ using this method. We must also mention that in \cite{LiuLu2}, it is claimed that in the analytic case \begin{equation*}
\|b_m(x,\bar x)\|_{L^\infty(U)} \leq C^{m},
\end{equation*}
however the proof contains some errors. In fact we do not expect the upper bounds $C^m$  to be correct in general although we do not have any counterexamples. We doubt the bounds $C^m$ because  by \cite{LuTian} the leading term in $b_m(x,\bar{x})$ is $\frac{m}{(m+1)!}\Delta^{m-1}\rho(x)$ where $\rho$ is the scalar curvature, so when the metric is real analytic we have $\frac{m}{(m+1)!}\Delta^{m-1}\rho(x)\approx C^m m!$. Nevertheless, as we shall see below, the bounds $C^m$ hold trivially in the case of constant holomorphic sectional curvatures. 

\end{rmk}

\subsection{Metrics with local constant holomorphic sectional curvatures} As a special case, if we assume that the \k metric $\omega=\frac{\sqrt{-1}}{2}\partial\dbar \phi$ has constant holomorphic sectional curvatures in $V$, then $\phi(x)$ is analytic in $V$ and there exists $U \subset V$ containing $p$ such that for $x, z \in U$, $b_m(x,z)$ are all constants and vanish for $m>n=\dim M$. We now state the improved results.

\begin{thm}\label{Constant1}
Assume the \k metric $\omega$ has constant holomorphic sectional curvatures $c$ in $V$. Then there exist a positive constant $\delta$, an open set $U \subset V$ containing $p$, and constants $b_1, \dots, b_n$ only dependent on $n$ and $c$, such that uniformly for all $x, y \in U$, we have
\begin{equation*}
K_k(x,y)=e^{k\psi(x,\bar y)}\frac{k^n}{\pi^n}\left (1+\sum _{j=1}^{ n}\frac{b_j}{k^j}\right )+e^{k\left(\frac{\phi(x)}{2}+\frac{\phi(y)}{2}\right)}e^{-\delta k}O(1).
\end{equation*}
Moreover, we have an explicit formula for the amplitude given by
\begin{align*} \label{gamma}
		1+\sum_{j=1}^n\frac{b_j}{k^j}=
		\begin{cases}
		\frac{c^n}{k^n}\frac{\Gamma (\frac{k}{c}+n+1)}{\Gamma(\frac{k}{c}+1)}=\frac{c^n}{k^n} \left (\frac{k}{c}+n \right ) \left (\frac{k}{c}+n-1 \right ) \dots \left (\frac{k}{c}+1 \right ), & \mbox{when } c\neq 0, \\
		1, & \mbox{when } c=0. 
		\end{cases}
	\end{align*}

In particular, if the \k metric $\omega$ has global constant holomorphic sectional curvature, we have
\begin{equation}\label{BFAsymptotic}
\left\|B_k(x)-\frac{k^n}{\pi^n}\left (1+\sum _{j=1}^{ n}\frac{b_j}{k^j}\right )\right\|_{L^\infty(M)}=e^{-\delta k}O(1),
\end{equation}
where $B_k(x):=\left|K_k(x,x)\right|_{h^k}=K_k(x,x)e^{-k\phi(x)}$ is the Bergman function.
\end{thm}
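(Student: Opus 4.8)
The plan is to reduce the constant-holomorphic-sectional-curvature case to an essentially explicit computation, and then feed it into Theorem \ref{Main}. First I would recall that a \k metric with constant holomorphic sectional curvature $c$ is locally biholomorphically isometric to (an open set in) one of the three model spaces: $\mathbb{CP}^n$ with the Fubini--Study metric (rescaled) when $c>0$, $\mathbb{C}^n$ with the flat metric when $c=0$, and the unit ball $\mathbb{B}^n$ with the Bergman metric when $c<0$. In each model the local \k potential is explicit --- e.g. $\phi(x) = \frac{1}{c}\log(1+c|x|^2)$ in suitable coordinates --- so that its polarization $\psi(x,\bar y) = \frac{1}{c}\log(1+c\langle x, \bar y\rangle)$ is immediate, and one checks directly that the Zelditch--Catlin coefficients $b_m(x,\bar x)$ are \emph{constants} (independent of $x$): this follows because the metric is homogeneous, so the Bergman function $B_k$ is constant on $M$ (when the metric is global) or locally constant, and each $b_m$ is a universal polynomial in the curvature and its covariant derivatives, which here are parallel. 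Since the $b_m$ are constants, their holomorphic extensions $b_m(x,z)$ are the same constants, and in particular the bound $\|b_m(x,z)\|_{L^\infty(U\times U)}\le C^m$ holds trivially; moreover, by the Lu--Tian leading-term formula combined with the fact that $\Delta^{m-1}\rho \equiv 0$ for $m>1$ in the constant-curvature case (or more robustly, by the exact computation below), one sees $b_m \equiv 0$ for $m > n$.

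Next I would establish the explicit amplitude formula. For the model metrics the Bergman kernel $K_k$ is classically known in closed form: on $\mathbb{CP}^n$ it is given by the monomial basis of $H^0(\mathbb{CP}^n, \mathcal{O}(k))$, yielding $K_k(x,y) = (1+c\langle x,\bar y\rangle)^{k/c} \cdot \binom{k/c+n}{n} \cdot (\text{normalization})$, and similarly for the ball one gets a ratio of Gamma functions. Extracting the coefficient of $e^{k\psi}k^n/\pi^n$ gives exactly
\[
1+\sum_{j=1}^n \frac{b_j}{k^j} = \frac{c^n}{k^n}\,\frac{\Gamma(\frac{k}{c}+n+1)}{\Gamma(\frac{k}{c}+1)} = \frac{c^n}{k^n}\Big(\tfrac{k}{c}+n\Big)\Big(\tfrac{k}{c}+n-1\Big)\cdots\Big(\tfrac{k}{c}+1\Big),
\]
which is a polynomial of degree $n$ in $1/k$ with constant coefficients, matching the claimed form; letting $c\to 0$ recovers the flat case value $1$. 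One should double-check the normalization of the volume form $\omega^n/n!$ and of the Hermitian metric $h^k$ against the conventions in the introduction so that the leading constant is $k^n/\pi^n$ with no extra factor; this is a finite bookkeeping task using \eqref{ZC} as the anchor on the diagonal.

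With the coefficients identified as constants vanishing beyond order $n$, the first displayed equation of the theorem follows from Theorem \ref{Main}: there the sum runs to $N_0(k)-1 = [(k/C)^{1/2}]-1$, but since $b_j \equiv 0$ for $j>n$ the sum is actually finite and equals $\sum_{j=1}^n b_j/k^j$ for $k$ large, while the remainder $e^{k(\phi(x)/2+\phi(y)/2)}e^{-\delta k^{1/2}}O(1)$ improves to $e^{-\delta k}O(1)$ --- this sharpening is exactly the point flagged in the Remark after Theorem \ref{MainLemma}, namely that the bound $C^m$ (rather than $C^m m!^2$) on the coefficients lets one take $N_0(k) \sim k/C$ and hence get $e^{-\delta k}$; one reruns the proof of Theorem \ref{Main} with this better input, or cites it. Finally, for the global constant-curvature case, $M$ is then compact and homogeneous, so the potential identities hold near every point with uniform $\delta, C$; taking $x=y$ in the pointwise estimate gives $B_k(x) = \frac{k^n}{\pi^n}(1+\sum_{j=1}^n b_j/k^j) + e^{-\delta k}O(1)$ uniformly in $x$, which is \eqref{BFAsymptotic}. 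The main obstacle I anticipate is not any single step but the careful matching of normalizations --- between the diastasis/polarization conventions, the closed-form model Bergman kernels, and the $k^n/\pi^n$ normalization --- together with justifying that the $b_m$ computed from the global model agree with the \emph{local} Zelditch--Catlin coefficients $b_m(x,\bar x)$ for a metric that only has constant curvature on the chart $V$; the latter is handled by the locality of the $b_m$ (they depend only on finitely many jets of $\phi$ at the point), so the local and global computations coincide on $U$.
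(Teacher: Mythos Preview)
Your proposal is correct and lands on the same endgame as the paper --- improved bounds on the $b_m$ fed back through the local-to-global machinery to upgrade the remainder from $e^{-\delta k^{1/2}}$ to $e^{-\delta k}$ --- but you take a genuinely different route to identify the coefficients.

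The paper does not invoke model spaces or closed-form Bergman kernels. Instead it works directly with the explicit Bochner-coordinate potential $\phi(x)=\tfrac{1}{c}\log(1+c|x|^2)$, computes $\theta(x,y,z)$ and $\Delta_0(x,y,\theta)$ in closed form, and plugs these into the BBS recursion \eqref{Recursive} to obtain a scalar recursion $b_m = -\sum_{l=1}^m (-c)^l \tfrac{(l+n-1)!}{(n-1)!} a_l\, b_{m-l}$ (Lemma~\ref{recursiveCHSC}). A rescaling $\widetilde b_m = b_m/c^m$ then reduces every $c\neq 0$ to the $c=1$ case, for which the explicit $\Gamma$-formula is quoted from \cite{Lu}. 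Your route --- classify locally, compute the model Bergman kernel, read off the coefficients, and transfer via locality of the $b_m$ --- is more classical and arguably more transparent, but it requires you to police normalizations carefully and to justify the transfer for $c<0$ (where the model is noncompact); the paper's recursion approach sidesteps both issues and stays entirely within the machinery already built. For the remainder estimate, be aware that you cannot simply ``cite'' Theorem~\ref{Main}: the paper explicitly reruns Lemma~\ref{error term} with the bound $\|b_m\|\le C^m m!$ to get $\|A_N\|\le C^N N!$, which is what permits $N_0(k)=[k/C]$ and hence $e^{-\delta k}$.
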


Estimate \eqref{BFAsymptotic} was previously obtained by Berman \cite{Ber} for Riemann surfaces (see also Remark 3.5 of \cite{Ber} for a sketch of a proof for all dimensions). 

Accordingly, we get the following corollary. \begin{cor}\label{Constant2}
Under the same assumptions and notations as in the above theorem,  we have uniformly for all $x , y \in U$ satisfying $ D(x, y) \leq  \frac{1}{2} \delta $,
\begin{equation*}
\frac{1}{k}\log \left | K_k (  x, y ) \right | _{h^k}=-\frac{D(x, y)}{2} + \frac{n \log k}{k}-\frac{n\log\pi}{k} + O \left (\frac{1}{k^2} \right ).
\end{equation*}
\end{cor}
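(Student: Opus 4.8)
\textbf{Proof strategy for Corollary \ref{Constant2}.}
The plan is to read the statement off directly from the off-diagonal expansion in Theorem \ref{Constant1} by taking absolute values, stripping the metric factor $e^{k(\phi(x)+\phi(y))/2}$, and then taking a logarithm — this is exactly the analytic-potential analogue of how Corollary \ref{Cor1} follows from Theorem \ref{Main}, the only structural difference being that here the remainder is $e^{-\delta k}O(1)$ rather than $e^{-\delta k^{1/2}}O(1)$, so that $D(x,y)$ need only be bounded by a fixed constant rather than by a $k^{-1/2}$-shrinking quantity. The one auxiliary identity I would record first is the polarization relation $\overline{\psi(y,\bar x)}=\psi(x,\bar y)$: the map $(x,z)\mapsto\overline{\psi(\bar z,\bar x)}$ is holomorphic in $(x,z)$, and since $\phi$ is real-valued it coincides with $\phi(x)=\psi(x,\bar x)$ on the totally real set $z=\bar x$, hence equals $\psi$ by uniqueness of the holomorphic extension. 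Therefore $\psi(x,\bar y)+\psi(y,\bar x)=2\Real\psi(x,\bar y)$, and the definition \eqref{Diastatis} of the diastasis gives
\[
\Real\psi(x,\bar y)=\tfrac12\big(\phi(x)+\phi(y)\big)-\tfrac12 D(x,y).
\]

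Next, using $|K_k(x,y)|_{h^k}=|K_k(x,y)|\,e^{-k(\phi(x)+\phi(y))/2}$ together with $|e^{k\psi(x,\bar y)}|=e^{k\Real\psi(x,\bar y)}$, the principal term of Theorem \ref{Constant1} contributes precisely $\frac{k^n}{\pi^n}\,e^{-kD(x,y)/2}\big(1+\sum_{j=1}^{n}b_j k^{-j}\big)$ to $|K_k(x,y)|_{h^k}$, the parenthesis being positive for large $k$ by the explicit $\Gamma$-quotient formula, while the error term contributes $e^{-\delta k}O(1)$. Since $D(x,y)\le\tfrac12\delta$, the principal term is bounded below by $c\,k^n e^{-\delta k/4}$ for large $k$, which dominates the error by a factor $O(e^{-3\delta k/4}k^{-n})$. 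Hence
\[
|K_k(x,y)|_{h^k}=\frac{k^n}{\pi^n}\,e^{-kD(x,y)/2}\Big(1+\sum_{j=1}^{n}b_j k^{-j}\Big)\big(1+O(e^{-\delta k/2})\big),
\]
and taking $\log$, dividing by $k$, and using $\frac1k\log\big(1+\sum_{j=1}^{n}b_j k^{-j}\big)=O(k^{-2})$ and $\frac1k\log\big(1+O(e^{-\delta k/2})\big)=O(e^{-\delta k/2})$ — both absorbed into $O(k^{-2})$ — yields the asserted asymptotic.

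I do not expect any genuine obstacle here: all the substance sits in Theorem \ref{Constant1}, and what remains is the routine logarithm manipulation above. The only mild points to watch are that $U$ should be taken as in Theorem \ref{Constant1} so that $\psi$ and its polarization identity are valid on $U\times U$, and that the hypothesis $D(x,y)\le\tfrac12\delta$ indeed leaves enough margin for the principal term to swamp the $e^{-\delta k}$ remainder — the factor $\tfrac12$ is comfortably sufficient, which is why no further shrinking of $\delta$ is needed.
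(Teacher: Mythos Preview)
Your proposal is correct and takes essentially the same approach as the paper: the paper explicitly says that Corollary \ref{Constant2} ``follow[s] by using the same argument'' as Corollary \ref{Cor1}, with the only change being that the remainder $e^{-\delta k}O(1)$ replaces $e^{-\delta k^{1/2}}O(1)$, so that the diastasis constraint $D(x,y)\le\tfrac12\delta$ suffices without shrinking in $k$. Your write-up is slightly more explicit (factoring out the principal term and invoking the polarization identity $\overline{\psi(y,\bar x)}=\psi(x,\bar y)$ directly), but the substance is identical.
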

 

There is a huge literature on Bergman kernels on compact complex manifolds. Before closing the introduction we only list some related work that were not cited above: \cite{BoSj, En, Ch, Loi, LuTian, MaMa, Liu, LiuLu1, LuZe, Ze3}. Applications of the Bergman kernel, and the closely related Szeg\"o kernel, can be found in \cite{Do}, \cite{BSZ}, \cite{ShZe}, \cite{YZ}. The book of Ma and Marinescu \cite{MaMaBook} contains an introduction to the asymptotic expansion of the Bergman kernel and its applications. See also the book review \cite{ZeBookReview} for more on the applications of Bergman kernels.  

\subsection{Organization of the paper} In Sections \ref{Sec Local} and \ref{Sec MethodBBS}, we follow the construction of local Bergman kernel in \cite{BBS}, but we obtain precise estimates for the error term by using the growth rate of Bergman coefficients $b_m(x,z)$ provided by Theorem \ref{MainLemma}. In Section \ref{Sec Global}, we give the proofs of Theorem \ref{Main} and Corollaries  \ref{complete asymptotics} and \ref{Cor1}. The proof of Theorem \ref{MainLemma} will be given in Section \ref{Sec ProofofMainLemma}. Section \ref{optimal} discusses the optimality of our bounds on Bergman coefficients. Section \ref{Sec ConstantCurvature} concerns the special case of constant holomorphic sectional curvatures. Finally, in Section \ref{Sec NewFormula} we derive transport equations for the amplitude of the parametrix of \cite{BBS}, which is analogous to transport equations for the parametrices of the wave kernel. 

\section{Local Bergman kernels}\label{Sec Local}

In \cite{BBS}, by using \emph{good} complex contour integrals, Berman-Berndtsson-Sj\"ostrand constructed  \textit{local reproducing kernels} (mod $e^{-k\delta}$) for $U=B^n(0,1)\subset \mathbb{C}^n$, which reproduce holomorphic sections in $U$ up to  $e^{-k\delta}$ error terms.  These kernels  are in general not holomorphic. By allowing more flexibility in choosing the amplitudes in the integral, the authors modified  these local reproducing kernels  to  local Bergman kernels, which  means that they are \emph{holomorphic}  local reproducing kernels mod $O(k^{-N})$. The global Bergman kernels are then approximated  using the standard  H\"ormander's $L^2$ estimates. 

Throughout this paper, we assume that $\phi$ is real analytic in a small open neighborhood $V\subset M$ of a given point $p$. Let $B^n(0,r)$ be the ball of radius $r$ in $\mathbb C^n$. We identify $p$ with $0\in \mathbb{C}^n$ and $V$ with the ball $B^n(0,3)\subset\mathbb{C}^n$ and denote $U=B^n(0, 1)$. Let $e_L$ be a local holomorphic frame of $L$ over $V$ as introduced in the introduction. 
For each positive integer $k$, we denote $H_{k\phi}(U)$ to be the inner product space of $L^2$-holomorphic functions on $U$ with respect to
\begin{equation*}
\left ( u, v \right)_{k\phi}=\int_U u \bar v \, e^{-k\phi}d \text{Vol},
\end{equation*} 
where $d\text{Vol}=\frac{\omega^n}{n!}$ is the natural volume form induced by the \k form $\omega= \frac{\sqrt{-1}}{2} \partial \bar \partial \phi$. 
So the norm of $u \in H_{k\phi}(U)$ is given by
\begin{equation*}
\|u\|^2_{k\phi}=\int_U |u|^2e^{-k\phi}d\text{Vol}. 
\end{equation*}
Let $\chi\in C_0^\infty(B^n(0,1))$ be a smooth cut-off function such that $\chi=1$ in $B^n(0,\frac{1}{2})$ and vanishes outside $B^n(0,\frac{3}{4})$.  The following result  gives a refinement of the the result of \cite{BBS} by giving a more precise estimate for the error term when the K\"ahler potential is real analytic. The main ingredient of the proof is Theorem \ref{MainLemma}, whose proof is delayed to Section \ref{Sec ProofofMainLemma}.
\begin{prop}\label{LocalBergmanKernel}
For each $N \in \mathbb N$, there exist $K_{k,x}^{(N)}(y) \in H_{k\phi}(U)$ and a positive constant $C$ independent of $N$ and $k$, such that for all $u \in H_{k\phi}(U)$ we have 
\begin{align}\label{eq 5.2}
\forall x \in B^n(0, 1/4): \quad u(x)=\left(\chi u, K^{(N)}_{k,x}\right)_{k\phi}+k^ne^{\frac{k\phi(x)}{2}}\mathcal R_{N+1}(\phi, k) \|u\|_{k\phi},
\end{align}
where
\begin{equation}\label{eq 5.3}
\quad |\mathcal R_{N+1}(\phi, k)| \leq \frac{C^{N+1}(N+1)!^2}{k^{N+1}}.
\end{equation}
The function $K_{k,x}^{(N)}$ is called a local Bergman kernel of order $N$.
\end{prop}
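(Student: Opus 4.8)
The plan is to run the local Bergman kernel construction of \cite{BBS} and, instead of stopping at the qualitative $O(k^{-N})$ error, to keep the error explicit and estimate it by the factorial growth bound $\|b_m\|_{L^\infty}\le C^m m!^2$ of Theorem \ref{MainLemma}. \emph{Setup.} Identify $p$ with $0$; after rescaling the identification $V\cong B^n(0,3)$ we may assume that the neighborhood of the diagonal on which the polarization $\psi(x,\bar y)$ of $\phi$ is holomorphic, and on which Theorem \ref{MainLemma} holds, contains $\overline{B^n(0,1)}$. Recall that $\psi$ is Hermitian, $\overline{\psi(x,\bar y)}=\psi(y,\bar x)$, that Calabi's diastasis $D(x,y)=\phi(x)+\phi(y)-\psi(x,\bar y)-\psi(y,\bar x)$ satisfies $c|x-y|^2\le D(x,y)\le C|x-y|^2$, and that the Zelditch--Catlin coefficients extend holomorphically to $b_m(x,\bar y)$, with $b_0\equiv 1$ and the recursion $b_l=-\sum_{j=1}^l \mathcal D_j b_{l-j}$, where $\mathcal D_j$ is the order-$2j$ operator in the steepest-descent expansion of the phase $\psi(x,\bar y)-\phi(y)$ at its critical point on the diagonal. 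In Morse coordinates $\mathcal D_j=\frac{1}{j!\,4^j}\Delta^j$, so $\mathcal D_j$ has real analytic coefficients and, by Cauchy's estimates on a slightly larger ball together with $(2j)!\le 4^j(j!)^2$, one has $\|\mathcal D_j g\|_{L^\infty(B^n(0,1)\times B^n(0,1))}\le j!\,C^j\,\|g\|_{L^\infty(B^n(0,1+\varepsilon)\times B^n(0,1+\varepsilon))}$ for every holomorphic $g$ — the point being that the $(2j)!$ from $\Delta^j$ is absorbed by the prefactor $1/j!$. Finally set $a^{(N)}(x,\bar y,k)=\sum_{m=0}^N k^{-m}b_m(x,\bar y)$ and let $K^{(N)}_{k,x}(y)$ be the local Bergman kernel manufactured from $a^{(N)}$ by the contour integral of \cite{BBS}; it is holomorphic in $y\in U$ (so $K^{(N)}_{k,x}\in H_{k\phi}(U)$), and modulo $O(e^{-\delta k}e^{k(\phi(x)+\phi(y))/2})$ one has $\overline{K^{(N)}_{k,x}(y)}=\frac{k^n}{\pi^n}e^{k\psi(x,\bar y)}a^{(N)}(x,\bar y,k)$ for $y$ near $x$, with exponentially small tails for $y$ bounded away from $x$.

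\emph{The error as a controlled oscillatory integral.} Fix holomorphic $u\in H_{k\phi}(U)$ and $x\in B^n(0,1/4)$. Substituting the above into $(\chi u,K^{(N)}_{k,x})_{k\phi}$, deforming the $y$-contour as in \cite{BBS}, and using that the $b_m$ are chosen precisely so that the reproducing identity holds through order $N$, one obtains
\begin{equation*}
(\chi u,K^{(N)}_{k,x})_{k\phi}=u(x)+\frac{k^n}{\pi^n}\,k^{-(N+1)}\!\int_U\!\chi(y)\,u(y)\,e^{k\psi(x,\bar y)-k\phi(y)}\,\varrho_{N+1}(x,\bar y,k)\,dV(y)+O\!\big(e^{-\delta k}\big)\|u\|_{k\phi},
\end{equation*}
where $\varrho_{N+1}$ is, up to a steepest-descent remainder of the same order, a universal finite combination of the functions $k^{-l}\mathcal D_j b_m$ with $j+m\ge N+1$. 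Using the estimate for $\mathcal D_j$, Theorem \ref{MainLemma}, and the elementary inequalities $j!\,m!^2\le (j+m)!\,m!\le ((j+m)!)^2$, one gets $\|\mathcal D_j b_m\|_{L^\infty}\le C^{j+m}((j+m)!)^2$, which is $\le C^{N+1}(N+1)!^2$ when $j+m=N+1$; the terms with $j+m>N+1$ are damped by their extra powers of $k^{-1}$, and the steepest-descent remainder is handled by its analytic bound — in carrying this out one makes the routine distinction between the range $N\lesssim k$, where \eqref{eq 5.3} has content, and $N\gtrsim k$, where its right-hand side is already $\ge 1$. Altogether $\|\varrho_{N+1}\|_{L^\infty(B^n(0,1)\times B^n(0,1))}\le C^{N+1}(N+1)!^2$.

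\emph{Estimating the integral and concluding.} On the diagonal region $|e^{k\psi(x,\bar y)}|^2e^{-k\phi(y)}=e^{k\phi(x)-kD(x,y)}$, so Cauchy--Schwarz together with $D(x,y)\ge c|x-y|^2$ and the Gaussian bound $\int_{\supp\chi}e^{-kD(x,y)}\,dV(y)\le C k^{-n}$ give
\[
\left|\frac{k^n}{\pi^n}\,k^{-(N+1)}\!\int_U\!\chi u\,e^{k\psi(x,\bar y)-k\phi(y)}\varrho_{N+1}\,dV\right|\le k^{n/2}\,e^{k\phi(x)/2}\,\frac{C^{N+1}(N+1)!^2}{k^{N+1}}\,\|u\|_{k\phi}.
\]
The contribution of $\{y:\,d(x,y)\ge\varepsilon_0\}$ together with the stray $O(e^{-\delta k})\|u\|_{k\phi}$ is $\le Ce^{-\delta k}e^{k\phi(x)/2}\|u\|_{k\phi}$, and this is absorbed using $e^{-\delta k}\le C^{N+1}(N+1)!^2 k^{-(N+1)}$, which holds for all $N\in\mathbb N$ and $k\ge 1$ once $C=C(\delta)$ is large enough (via $(N+1)!\ge((N+1)/e)^{N+1}$ this reduces to $\sup_{t>0} e^2kt^2e^{-\delta kt}\le C$). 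Since $k^{n/2}\le k^n$, taking $\mathcal R_{N+1}(\phi,k)$ to be the resulting error coefficient yields \eqref{eq 5.2}--\eqref{eq 5.3}.

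\emph{Main obstacle.} The crucial point is to keep the factorial loss at exactly $(N+1)!^2$. A naive $C^\infty$-style stationary-phase remainder for $\int e^{k\psi}a^{(N)}ue^{-k\phi}$ would only give a bound of size $\sup_{\supp\chi}|\partial^{\le 2(N+1)}a^{(N)}|\cdot k^{-(N+1)}\sim ((N+1)!)^4\,k^{-(N+1)}$, which is far too weak for the applications (where $N\sim k^{1/2}$). Avoiding this forces three things: staying in the analytic category so that each $\mathcal D_j$ costs only $j!\,C^j$ rather than $(2j)!\,C^j$; keeping the error as an oscillatory integral with an $L^\infty$-bounded amplitude rather than Taylor-expanding $u$ at $x$, which would produce a divergent series; and feeding the nontrivial bound of Theorem \ref{MainLemma} in precisely at the coefficient $b_{N+1}$. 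The remaining ingredients — the diastasis comparison, the Gaussian localization away from the diagonal, and the $L^2(e^{-k\phi})\to L^\infty$ estimates for holomorphic functions with this weight — enter exactly as in \cite{BBS}.
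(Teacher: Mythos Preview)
Your overall strategy --- run the \cite{BBS} construction and feed in the bound $\|b_m\|\le C^m m!^2$ from Theorem \ref{MainLemma} --- is the paper's strategy, and your third paragraph (Cauchy--Schwarz plus the diastasis/Gaussian localization, and absorbing $e^{-\delta k}$ into $C^{N+1}(N+1)!^2/k^{N+1}$) is fine. But the second paragraph, where the error formula is actually produced, does not hold together as written.

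The displayed identity
\[
(\chi u,K^{(N)}_{k,x})_{k\phi}=u(x)+\frac{k^n}{\pi^n}\,k^{-(N+1)}\!\int_U\!\chi\,u\,e^{k\psi(x,\bar y)-k\phi(y)}\,\varrho_{N+1}\,dV+O(e^{-\delta k})\|u\|_{k\phi},
\]
with $\varrho_{N+1}$ \emph{independent of $u$} and $\|\varrho_{N+1}\|_\infty\le C^{N+1}(N+1)!^2$, is precisely the thing to be proved, and your justification (``deforming the $y$-contour as in \cite{BBS}, and using that the $b_m$ are chosen so that the reproducing identity holds through order $N$'') does not supply it. Two specific problems. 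First, \cite{BBS} do not deform a $y$-contour: the contour variable is the auxiliary fiber variable $\theta$, and the exact reproducing identity $u(x)=\text{Op}_\Lambda(1)(\chi u)+O(e^{-k\delta})e^{k\phi(x)/2}\|u\|_{k\phi}$ (equation \eqref{BBS 2.1}) comes from a contour shift in $\theta$, not from steepest descent in $y$. Second, the recursion $b_l=-\sum_{j=1}^l\mathcal D_j b_{l-j}$ only says that the \emph{formal} Laplace expansion of $\int e^{k\psi-k\phi}B^{(N)}\,dV$ equals $1+O(k^{-(N+1)})$; once the arbitrary holomorphic $u$ sits in the amplitude, the steepest-descent operators $\mathcal D_j$ land on $u$ too, and your description of $\varrho_{N+1}$ as ``a universal finite combination of $k^{-l}\mathcal D_j b_m$'' is no longer what the remainder is. You flag in your last paragraph that one must avoid differentiating $u$, but you never supply the mechanism that avoids it.

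The paper's mechanism is the vector field $A$, which you do not introduce. In the contour-$\Psi$DO picture one has $1+a^{(N)}=B^{(N)}\Delta_0$ and the algebraic identity $a^{(N)}-\nabla A^{(N+1)}=-k^{-(N+1)}D_\theta\!\cdot\!A_{N+1}$ with $\nabla=D_\theta+k(x-y)$. Integration by parts in $\theta$ annihilates $\text{Op}_\Lambda(\nabla A^{(N+1)})(\chi u)$ up to a term living on $\supp d\chi$ (hence $O(e^{-\delta k})$), leaving the single error integral $\text{Op}_\Lambda(k^{-(N+1)}D_\theta\!\cdot\!A_{N+1})(\chi u)$ --- and no derivative ever falls on $u$. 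The quantitative input is then Lemma \ref{error term}: one writes $A_m$ explicitly as a finite sum of terms $\frac{(-D_\theta D_y)^i}{i!}$ applied to an integral of $D_y\big(\sum_{s+t=j-1}\frac{(D_\theta D_y)^s}{s!}(b_t\Delta_0)\big)$ with $i+j=m$, and bounds each piece by Cauchy estimates and Theorem \ref{MainLemma} to get $\|D_\theta\!\cdot\!A_{N+1}\|_\infty\le C^{N+1}(N+1)!^2$. Your observation that in the analytic category an order-$2j$ stationary-phase operator costs only $j!\,C^j$ is morally the same saving that makes the $i!$ and $s!$ (rather than $(2i)!$, $(2s)!$) appear in this formula, but it is the $A$-packaging --- not a direct steepest-descent bookkeeping in $y$ --- that delivers a $u$-independent error amplitude.
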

\begin{rmk}
In \cite{BBS}, only the qualitative estimate $\mathcal R_{N+1}(\phi, k)= O_N(\frac{1}{k^{N+1}})$ is given.
\end{rmk}
To prove this proposition we first need to recall the techniques of \cite{BBS}. 

\subsection{Review of the method of Berman-Berndtsson-Sj\"ostrand} \label{ReviewBBS} The main idea is to construct the local holomorphic reproducing kernel (also called local Bergman kernel) by means of the calculus of contour pseudo-differential operators (contour $\Psi$DO for short) introduced by Sj\"ostrand \cite{Sj}.  Before we introduce the notion of contour integrals we present some notations and definitions. 

Suppose $\phi(x)$ is analytic in $V=B^n(0, 3)$. Without the loss of generality we assume that the radius of convergence of it power series in terms of $x$ and $\bar x$ is $3$.  By replacing $\phi(x)$ by $\phi(x) - \phi(0)$, we can assume that $\phi(0)=0$.  We  then denote $\psi(x,z)$ to be the holomorphic extension of $\phi(x)$ by replacing $\bar x$ with $z$. This procedure is called polarization. One can easily verify that $ \psi(x, z)$ satisfies the formal definition of holomorphic extension, namely
\begin{itemize}
\item $\psi(x, z)$ is holomorphic in $B^n(0, 3) \times B^n(0, 3)$.
\item $\psi(x, \bar x) = \phi(x)$. 
\end{itemize}   
Moreover, since $\phi(x)$ is real-valued, we have $ \oo{\psi(x, z)} = \psi( \bar z, \bar x)$. We also define
\begin{equation}\label{theta}
	\theta(x,y,z)=\int_0^1(D_x\psi)(tx+(1-t)y,z)dt,
\end{equation}
where the differential operator $D_x$ is the gradient operator defined by
\begin{align*}
D_x&=(D_{x_1},D_{x_2},\cdots,D_{x_n}).
\end{align*}
Note that $\theta(x, x, z)= \psi_x(x, z)$. 
It is easy to prove that the Jacobian of the map $(x,y,z)\rightarrow (x,y,\theta)$ at $(x,y,z)=(0,0,0)$ is non-singular.  Thus the map is actually  a biholomorphic map between two neighborhoods of the origin of $\mathbb C^{3n}$. As a result, we can use $(x,y,z)$ or $(x,y,\theta)$ as local coordinates interchangeably. Without  loss of generality we can assume that $(x, y, z) \in B^n(0, 3) \times B^n(0, 3) \times B^n(0, 3)$ and  $\theta \in W$, where
$$W =\theta \left ( B^n(0, 3) \times B^n(0, 3) \times B^n(0, 3)\right ). $$ Note that $W$ contains the origin because by our assumption $\phi(0)=0$. 

A fundamental idea of \cite{BBS} is to use the estimate
\begin{equation}\label{BBS 2.1}
u(x)= c_n \left(\frac{k}{2\pi}\right)^n\int_\Lambda e^{k\,\theta\cdot (x-y)}u(y)\chi(y)d\theta\wedge dy
+O(e^{-k\delta})e^{\frac{k\phi(x)}{2}}\|u\|_{k\phi}
\end{equation}
which holds uniformly for $x \in B^n(0, \frac14)$, for any holomorphic function $u$ defined on $ B^n(0, 1)$. Here, 
$c_n=i^{-n^2}$, $\delta$ is a positive constant, and $\Lambda=\{(y,\theta):\theta=\theta(x,y)\}$ is a \textit{good contour}, which means that there exists $\delta>0$ such that for any $x,y$ in a neighborhood of the origin, 
\begin{align}\label{GoodContour}
	2\Real\theta\cdot (x-y)\leq -\delta|x-y|^2-\phi(y)+\phi(x).
\end{align} 
One can easily verify that 
\begin{equation} \label{contour} \Lambda=\{(y,\theta):\theta=\theta(x,y,\bar{y})\}, \end{equation} with $\theta(x,y,z)$ defined by \eqref{theta}, is a good contour by observing that
\begin{equation*}
\theta\cdot (x-y)=\psi(x,\bar{y})-\psi(y,\bar{y}).
\end{equation*} 
To put \eqref{BBS 2.1} into a useful perspective, one should think of the integral in \eqref{BBS 2.1} as a contour $\Psi$DO defined as follows. Let ${a=a(x,y,\theta,k)}$ be a holomorphic symbol in $B^n(0, 3) \times B^n(0, 3) \times W$, with an asymptotic expansion of the form
\begin{equation*}
a(x,y,\theta,k)\sim a_0(x,y,\theta)+\frac{a_1(x,y,\theta)}{k}+\frac{a_2(x,y,\theta)}{k^2}+\cdots \; .
\end{equation*}
For simplicity, we will suppress the dependency on $k$ and write $a=a(x,y,\theta)$.

A $\Psi$DO associated to a good contour $\Lambda$ and an amplitude $a(x, y, \theta)$, is an operator on $C^\infty_0(U)$ defined by 
$$\text{Op}_\Lambda(a)\, u = c_n \left(\frac{k}{2\pi}\right)^n \int_\Lambda e^{k\,\theta\cdot (x-y)}a(x, y, \theta)\,  u(y)\, d\theta\wedge dy.$$ Thus in this language \eqref{BBS 2.1} means that for $x \in B^n(0, 1/4)$
$$(\chi u)(x)= \text{Op}_\Lambda(1) (\chi u) 
+O(e^{-k\delta})e^{\frac{k\phi(x)}{2}}\|u\|_{k\phi}.$$
Roughly speaking this says that $\text{Op}_\Lambda (1)$ is the identity operator mod $O(e^{-k\delta})$.  
We define the integral kernel $K_{k, x}(y)$ of $\text{Op}_\Lambda(a)$ with respect to the inner product $( \cdot, \cdot )_{k \phi}$, by 
$$\text{Op}_\Lambda(a) u =\left(u, K_{k,x}\right)_{k\phi}. $$ The first observation is that the kernel $K_{k, x}(y)$ of $\text{Op}_\Lambda(1)$, associated to the contour \eqref{contour}, is not holomorphic. The idea of \cite{BBS} is to replace $\text{Op}_\Lambda(1)$ by $\text{Op}_\Lambda(1+a)$ where $a(x, y, \theta)$ is a \textit{negligible amplitude} and the kernel of $\text{Op}_\Lambda(1+a)$ is holomorphic. An amplitude $a(x, y, \theta)$ is negligible if 
$$\text{Op}_\Lambda(a) (\chi u)= O(k^{-\infty})e^{\frac{k\phi(x)}{2}}\|u\|_{k\phi}. $$ 
To find a suitable condition for negligible amplitudes one formally writes 
$$ \text{Op}_\Lambda(a) = \text{Op}_\Lambda(S a |_{x=y}),$$
where $S$ is a standard operator that is used in microlocal analysis to turn a symbol $a(x, y, \theta)$ of a $\Psi$DO to a symbol of the form $\tilde a (x, \theta)$. The operator $S$ is formally defined by
\begin{equation*}
S=e^{\frac{D_\theta \cdot D_y}{k}}=\sum_{m=0}^\infty\frac{(D_\theta\cdot D_y)^m}{m!k^m}.
\end{equation*}
Then an amplitude $a$ is negligible if $S a |_{x=y} \sim 0$ as a formal power series. This implies that there exists a holomorphic vector field $A(x, y, \theta)$ with formal power series \begin{equation*}
A(x,y,\theta)\sim
A_0(x,y,\theta)+\frac{A_1(x,y,\theta)}{k}+\frac{A_2(x,y,\theta)}{k^2}+\cdots.
\end{equation*} such that 
\begin{equation}\label{eq 5.11}
Sa\sim k(x-y)\cdot SA.
\end{equation}
By a straightforward calculation, it can be seen that this is equivalent to
\begin{equation}\label{nabla}
    a= \nabla A:=D_\theta \cdot A+k(x-y)\cdot A. 
\end{equation}
By comparing coefficients, \eqref{nabla} is equivalent to the following relations between $a_m$ and $A_m$:
\begin{equation}\label{Dtheta}
a_m=D_\theta\cdot A_m+(x-y)\cdot A_{m+1}\; .
\end{equation}
Here $a_m(x,y,\theta)$ are holomorphic functions and $A_m(x,y,\theta)$ are holomorphic vector fields in $\mathbb C^n$, defined on $B^{n}(0,3) \times B^{n}(0,3) \times W$. 

Next, we observe that the integral kernel of $\text{Op}_\Lambda(1+a)$ is holomorphic if
\begin{equation}\label{aB}
1+a(x,y,\theta)\sim B(x,z(x,y,\theta))\Delta_0(x,y,\theta),
\end{equation}
where
\begin{equation*}
	\Delta_0(x,y,\theta)=\frac{\det \psi_{yz}(y,z)}{\det\theta_z(x,y,z)}=\det\partial_\theta\psi_y(x,y,\theta),
\end{equation*}
and $B(x, z)$ is holomorphic and has an asymptotic expansion of the form
\begin{equation}\label{B} B(x, z) \sim b_0(x,z)+\frac{b_1(x,z)}{k}+\frac{b_2(x,z)}{k^2}+\cdots, \end{equation} where  $b_m(x, z)$ are holomorphic.  In fact, as it turns out, $b_m(x, z)$ are the holomorphic extensions of $b_m(x, \bar x)$, the Bergman kernel coefficients of the on-diagonal asymptotic expansion of Zelditch-Catlin \eqref{ZC}. 

If the amplitude $a$ is negligible, then by applying $S( \cdot) |_{x=y}$ to both sides of \eqref{aB}, we get 
$$S \left( B(x,z(x,y,\theta))\Delta_0(x,y,\theta) \right )|_{x=y} \sim 1.$$ 
From this, one gets the following recursive equations for Bergman kernel coefficients $b_m(x, z)$, which will play a key role in the proof of Theorem \ref{MainLemma}:
\begin{equation}\label{Recursive1}
	b_m(x,z(x,x,\theta))=-\sum_{l=1}^m\frac{(D_y\cdot D_\theta)^l}{l!}\big(b_{m-l}\left(x,z(x,y,\theta)\right)\Delta_0(x,y,\theta)\big)|_{y=x}.
\end{equation}
Additionally, by comparing the coefficients on both sides of \eqref{aB}, we have the following relations between $a_m$ and $b_{m}$:
\begin{equation}\label{eq 5.7}
a_m(x,y,\theta)=
\begin{cases}
\Delta_0(x,y,\theta)-1 & \mbox{ when }m=0,\\
b_m(x,z(x,y,\theta))\Delta_0(x,y,\theta) & \mbox{ when }m\geq 1.
\end{cases}
\end{equation}
These equations will be useful in estimating $a_m$ in terms of the bounds on $b_m$ from Theorem \ref{MainLemma}. 

We are now prepared to prove Proposition \ref{LocalBergmanKernel}.

\section{The remainder estimates and the proof of Proposition~\ref{LocalBergmanKernel}}\label{Sec MethodBBS}

Let $a_m$, $A_m$, and $b_m$ be given by \eqref{eq 5.7}, \eqref{Dtheta}, and \eqref{B}. Define $a^{(N)}$, $A^{(N)}$, and $B^{(N)}$ to be the partial sums of $a$, $A$, and $B$ up to order ${k^{-N}}$. We claim that
\begin{equation}\label{LocalBergmanKernel2}
u(x)=\text{Op}_\Lambda \left (1+a^{(N)} \right ) ( \chi u)
+k^n\mathcal R_{N+1}( \phi, k) e^{\frac{k\phi(x)}{2}}\|u\|_{k\phi},
\end{equation} where uniformly for $x \in B(0, \frac14 )$ we have
\begin{equation}\label{R}
|\mathcal R_{N+1}( \phi, k)| \leq\frac{C^{N+1}(N+1)!^2}{k^{N+1}},
\end{equation}
and the integral kernel of $\text{Op}_\Lambda \left (1+a^{(N)} \right )$  is holomorphic. The complex conjugate of this kernel is given by 
\begin{equation}\label{KandB}
\oo{K^{(N)}_{k,x}(y)}=\left(\frac{k}{\pi}\right)^n  e^{k\psi(x,\bar{y})}\left(1+ a^{(N)}(x, y, \theta(x, y, \bar y)\right) \Delta_0(x, y, \theta(x, y, \bar y))^{-1},
\end{equation}
which by the relation \eqref{aB} is reduced to 
$$\oo{K^{(N)}_{k,x}(y)}=\left(\frac{k}{\pi}\right)^n  e^{k\psi(x,\bar{y})} B^{(N)}(x, \bar y). $$
Hence  ${K^{(N)}_{k,x}(y)}$ is holomorphic in $y$ because $B(x, z)$ is holomorphic. 

In the light of \eqref{BBS 2.1}, to prove \eqref{LocalBergmanKernel2} it suffices to show that
$$\forall x \in B(0, \frac14): \quad \left | \text{Op}_\Lambda \left (a^{(N)} \right )( \chi u)(x)  \right | \leq \frac{C^{N+1}(N+1)!^2}{k^{N+1-n}} e^{\frac{k\phi(x)}{2}}\|u\|_{k\phi}. $$
By definition,
$$\text{Op}_\Lambda \left (a^{(N)} \right )( \chi u)(x)= c_n \left(\frac{k}{2\pi}\right)^n\int_\Lambda e^{k\theta\cdot (x-y)}u(y)\chi(y)\, a^{(N)} \, d\theta\wedge dy . $$
We observe that by the definition of $\nabla$ in \eqref{nabla},
\begin{equation*}
a^{(N)}-\nabla \left(A^{(N+1)}\right)
=-\frac{D_\theta\cdot A_{N+1}}{k^{N+1}}.
\end{equation*} 
Then it is easy to see that using integration by parts (see for example the proof of Proposition 2.2 in \cite{BBS}), we get
\begin{align*}\label{a^(N)}
\int_\Lambda e^{k\theta\cdot (x-y)}u(y)\chi(y)  a^{(N)}d\theta\wedge dy=
& -\int_\Lambda d\chi\wedge u(y)e^{k\theta\cdot(x-y)}A^{(N+1)}\wedge dy \\
& -\int_\Lambda e^{k\theta\cdot (x-y)}u(y)\chi(y)\frac{D_\theta\cdot A_{N+1}}{k^{N+1}}d\theta\wedge dy.
\end{align*} In the first integral, we have  identified  the $n$-vector $A$ as an $(n-1, 0)$ form defined by $ A= \sum_{j=1}^n A_j \widehat{d\theta_j}$, where $\widehat{d\theta_j}$ is the wedge product of all $\{d \theta_k\}_{k \neq j}$ such that $d\theta_j \wedge \widehat{d\theta_j} = d\theta$. 

We now estimate the two integrals on the right hand side of the above equality. For the first integral, since $d\chi(y)=0$ for $y\in B^{n}(0,\frac{1}{2})$,  $|x-y|\geq \frac{1}{4}$ for $x\in B^n(0,\frac{1}{4})$. Since $\theta$ satisfies \eqref{GoodContour}, by changing $\delta$ to a smaller constant, the integrand of the first integral is bounded by some constant times
\begin{equation*}
|u(y)|e^{\frac{k\phi(x)}{2}-\frac{k\phi(y)}{2}-\delta k}|A^{(N+1)} (x, y, \theta(x, y, \bar y) ) |.
\end{equation*}
So by using the Cauchy-Schwartz inequality, the first integral is bounded by
\begin{equation*}
\|A^{(N+1)}\|_{L^\infty(B^{n}(0,1)\times B^{n}(0,1) \times W_1  )}e^{\frac{k\phi(x)}{2}} \|u\|_{k\phi}O(e^{-\delta k}),
\end{equation*}
where $W_1 \subset W$ is defined by $$W_1=\theta \left ( B^{n}(0,1)\times B^{n}(0,1) \times B^n(0, 1) \right ).$$ 
Similarly, the integrand of the second integral is bounded by some constant times
\begin{equation*}
|u(y)|e^{\frac{k\phi(x)}{2}-\frac{k\phi(y)}{2}}\left|\frac{(D_\theta\cdot A_{N+1} ) (x, y, \theta(x, y, \bar y) )}{k^{N+1}}\right|. 
\end{equation*}
Again using the Cauchy-Schwartz inequality, the second integral is bounded by
\begin{equation*}\label{eq 5.28}
k^{-N-1} \left\| D_\theta\cdot A_{N+1} \right\|_{L^\infty(B^{n}(0,1)\times B^{n}(0,1) \times W_1  )} e^{\frac{k\phi(x)}{2}}\|u\|_{k\phi}.
\end{equation*} 
Finally, \eqref{LocalBergmanKernel2} and \eqref{R}, hence Proposition \ref{LocalBergmanKernel}, follow quickly from the following lemmas.

\begin{lemma}\label{max product of power and exponential}
For any $N \in \mathbb N$,
$$ k e^{-k \delta} \leq \left(\frac{2}{\delta}\right)^{N+2} \frac{ (N+1)!}{k^{N+1}}. $$
\end{lemma}
\begin{proof}
	Define $f(x)=x^{N+2}e^{-\delta x}$ on $[0,\infty)$. It is easy to see that $f$ attains its maximum at $x=\frac{N+2}{\delta}$. Therefore, by using Stirling's formula, we have
	\begin{align*}
		f(x)\leq \left(\frac{N+2}{\delta}\right)^{N+2}e^{-(N+2)}\leq \left(\frac{1}{\delta}\right)^{N+2}(N+2)!.
	\end{align*} 
	The Lemma follows from $N+2\leq 2^{N+2}$.
\end{proof}

\begin{lemma}\label{error term}
We have
\begin{equation}\label{Am}
    \|A_{N}\|_{L^\infty (B^{n}(0,1) \times B^{n}(0,1) \times W_1 )}\leq C^{N}N!^2,
\end{equation}
\begin{equation} \label{Sum}
    \|A^{(N)}\|_{L^\infty (B^{n}(0,1)\times B^{n}(0,1) \times W_1  )}\leq C k + \frac{C^{N}N!^2}{k^N},
\end{equation}
\begin{equation*}
    \|D_\theta\cdot A_{N}\|_{L^\infty (B^{n}(0,1) \times B^{n}(0,1) \times W_1 )}\leq C^{N}N!^2,
\end{equation*}
for a constant $C$ independent of $N$. 
\end{lemma}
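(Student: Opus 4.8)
The plan is to prove Lemma \ref{error term} by induction on $N$, using the recursive structure of the $A_m$ coming from equations \eqref{Dtheta} and \eqref{eq 5.7} together with the key input $\|b_m(x,z)\|_{L^\infty(U\times U)}\leq C^m m!^2$ of Theorem \ref{MainLemma} (whose proof is deferred to Section \ref{Sec ProofofMainLemma}). First I would fix a polydisc neighborhood of the origin, say $B^n(0,2)\times B^n(0,2)\times W_2$ with $W_2=\theta(B^n(0,2)^3)$, slightly larger than the domain $B^n(0,1)\times B^n(0,1)\times W_1$ on which the bounds are claimed, and work with sup-norms $\|\cdot\|$ over this larger polydisc. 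The point of the extra room is to invoke the Cauchy estimates: if a holomorphic function $g$ satisfies $\|g\|\leq M$ on $B^n(0,2)^2\times W_2$, then each of its first-order derivatives $D_{\theta_j}g$, $D_{y_j}g$ is bounded by $c M$ on the slightly smaller polydisc, where $c$ depends only on the geometry (the distance between the two polydiscs). Iterating, $\|(D_y\cdot D_\theta)^\ell g\|\leq (c')^\ell \ell!^{?}$... — more precisely, applying Cauchy's estimate $\ell$ times costs a factor like $(C\ell)^\ell\sim \ell!\, C^\ell e^{\ell}$, so one gets $\|(D_y\cdot D_\theta)^\ell g\|\leq C^\ell \ell!\, \|g\|$ on a polydisc whose radii have shrunk by a fixed fraction; the shrinkage can be absorbed once and for all since we only ever need finitely... actually we need it for all $\ell$, so one must be slightly careful and fix an intermediate scale, e.g. derive the $(D_y\cdot D_\theta)^\ell$ bound on $B^n(0,3/2)^2\times W_{3/2}$ from a bound on $B^n(0,2)^2\times W_2$ uniformly in $\ell$, paying $C^\ell\ell!$.

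The heart of the argument is the recursion. From \eqref{Dtheta}, namely $a_m=D_\theta\cdot A_m+(x-y)\cdot A_{m+1}$, one solves for $A_{m+1}$ in terms of $a_m$ and $A_m$. The standard way (this is where the good choice of $\theta$-coordinates matters) is: restrict to $x=y$ to see that $D_\theta\cdot A_m|_{x=y}=a_m|_{x=y}$ is forced, and then the division by $(x-y)$ is carried out componentwise after using the $\theta$-coordinates; concretely one uses that $A_{m+1}$ can be obtained from $a_m-D_\theta\cdot A_m$, which vanishes on $\{x=y\}$, by a bounded division operator (an integral along the segment, as in the definition \eqref{theta} of $\theta$ itself), which is bounded on sup-norms over nested polydiscs with a fixed constant. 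So schematically $\|A_{m+1}\|\leq C_0\big(\|D_\theta\cdot A_m\|+\|a_m\|\big)\leq C_1\big(\|A_m\|'+\|a_m\|\big)$, where $\|\cdot\|'$ is the sup-norm on a slightly larger polydisc and the Cauchy estimate converts $\|D_\theta\cdot A_m\|$ into $C\|A_m\|'$. Meanwhile \eqref{eq 5.7} gives $\|a_m\|\leq \|\Delta_0\|\,\|b_m\|\leq C^m m!^2$ for $m\geq 1$ (and $\|a_0\|=\|\Delta_0-1\|\leq C_0$), using Theorem \ref{MainLemma} and that $\Delta_0$ is a fixed bounded holomorphic function near the origin. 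Feeding this in and tracking the unavoidable loss of polydisc radius at each step — which forces one to fix, at the outset, a decreasing sequence of radii $r_m=1+2^{-m}$ or similar converging to $1$, and to note $\prod_m (\text{loss at step } m)$ converges or at worst contributes a further $C^m$ — one gets by induction $\|A_N\|_{L^\infty(B^n(0,1)^2\times W_1)}\leq C^N N!^2$, which is \eqref{Am}. The bound \eqref{Sum} on the partial sum $A^{(N)}=\sum_{m=0}^{N-1}k^{-m}A_m$ then follows by the triangle inequality: the $m=0$ term contributes $\|A_1\|\cdot$... wait — one must recall $A^{(N)}$ starts the expansion of a vector field whose leading behavior, by \eqref{eq 5.11}--\eqref{nabla}, is $O(k)$ in operator terms; more carefully, $\|A^{(N)}\|\leq \sum_{m=0}^{N-1} k^{-m} C^m m!^2 \leq \|A_0\|+\|A_1\| + \sum_{m\geq 2} k^{-m}C^m m!^2$, and the first two terms and the tail are controlled, giving $Ck+C^N N!^2/k^N$ after possibly relabeling (the $Ck$ absorbing the contour/normalization constants that appear through the $k(x-y)\cdot A$ term when these estimates are used back in \eqref{LocalBergmanKernel2}). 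Finally, $\|D_\theta\cdot A_N\|\leq C^N N!^2$ is just one more Cauchy estimate applied to \eqref{Am} on nested polydiscs.

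The main obstacle, and the step deserving the most care, is controlling the \emph{uniformity in $N$} of the polydisc-radius losses: each application of a Cauchy estimate or of the bounded-division operator shrinks the domain, and a priori $N$ such shrinkages could shrink it to nothing. The fix is the classical device of choosing a summable sequence of radii $r_0>r_1>r_2>\cdots$ with $r_m\downarrow 1$ (so $r_m-r_{m+1}\sim 2^{-m}$), proving the bound for $A_m$ on $B^n(0,r_m)^2\times W_{r_m}$, and checking that the per-step constant $C_1/(r_m-r_{m+1})\sim C_1 2^m$, when multiplied across the recursion, only inflates the final constant from $C$ to a larger fixed $C$ — because $\prod_{m=1}^N (C_1 2^m)=C_1^N 2^{N(N+1)/2}$ grows like $2^{N^2/2}$, which is \emph{too fast} to be absorbed into $C^N N!^2$. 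This means the naive bookkeeping fails and one must instead be more economical: either (i) note that the division operator in \eqref{Dtheta} does \emph{not} lose radius (it is an honest integral operator, bounded on a fixed polydisc), so that only the $D_\theta$ in $D_\theta\cdot A_m$ costs a Cauchy estimate, and arrange that this single derivative per step is applied with a \emph{fixed} radius gap by working throughout on two fixed nested polydiscs $B^n(0,1)\subset B^n(0,1+\eta)$ rather than a shrinking sequence; or (ii) absorb the one derivative per step into the factorial growth, since losing one derivative order per recursion step over $N$ steps is exactly the kind of loss that produces $N!$, not $2^{N^2}$, when done on a fixed-gap pair of domains with the sharp one-step Cauchy constant $C/\eta$. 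Approach (i)/(ii) is the right one: the recursion \eqref{Dtheta} reads $A_{m+1}=\mathcal{D}(a_m-D_\theta\cdot A_m)$ with $\mathcal{D}$ a \emph{fixed} bounded operator and only $D_\theta$ costing a derivative, so iterating $N$ times loses $N$ derivatives total and thus a factor $(C/\eta)^N N!$ at worst, comfortably inside $C^N N!^2$ once combined with the $\|a_m\|\leq C^m m!^2$ input. Verifying carefully that the division in \eqref{Dtheta} is genuinely radius-preserving — which is exactly the reason \cite{BBS} set up the $\theta$-coordinates and the averaged operator \eqref{theta} — is the crux of the whole lemma.
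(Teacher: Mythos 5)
Your overall strategy --- solving the recursion \eqref{Dtheta} step by step for $A_{m+1}$ in terms of $A_m$ and $a_m$ --- is genuinely different from what the paper does, and the gap you yourself flag as ``the crux'' is not actually closed by your proposed fix. You correctly compute that a shrinking sequence of radii with gaps $2^{-m}$ produces a factor $2^{N^2/2}$, which is fatal; but your remedy, ``working throughout on two fixed nested polydiscs,'' is not available: after one first-order Cauchy estimate your bound lives on the smaller polydisc, so the next step must shrink again. (The workable version of your idea is a chain of $N+1$ domains with gaps $\sim 1/N$, so each step costs $CN$ and the product is $(CN)^N\le C'^N N!\,e^N$; you never state this.) Moreover, your claim that the division operator $\mathcal D$ is ``radius-preserving'' is false: the averaged division is $\int_0^1 D_y(\cdots)(x,tx+(1-t)y,\theta)\,dt$, i.e.\ the fundamental theorem of calculus divides by $(x-y)$ at the cost of one more $y$-derivative per step. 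The paper sidesteps the whole iteration problem: using $S$ and $S^{-1}$ it writes each $A_m$ in \emph{closed form} as a finite convolution sum of operators $(D_\theta\cdot D_y)^i$ applied to $b_j\Delta_0$, and then applies the Cauchy integral formula only twice, each time for a high-order derivative on a \emph{fixed} pair of nested domains ($W_1\Subset W_2\Subset W_3$), paying $C^{2i}i!$ and $C^{2s+1}s!$ in one shot; combined with $\|b_t\Delta_0\|\le C^t t!^2$ the convolution sum closes to $C^m m!^2$. This is exactly the economical bookkeeping your sketch is groping for but does not supply.

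Two further points. First, the solvability of \eqref{Dtheta} for $A_{m+1}$ requires the compatibility condition $D_\theta\cdot A_m|_{x=y}=a_m|_{x=y}$, which you call ``forced''; it is in fact the nontrivial recursion \eqref{Recursive1} that \emph{defines} $b_m$, so it is an input from the BBS construction, not an automatic consequence, and your induction should say so. Second, your treatment of \eqref{Sum} is essentially absent: the bound $\sum_{m=1}^N C^m m!^2/k^m\le Ck+C^N N!^2/k^N$ is not a bare triangle inequality but rests on the observation that $m\mapsto C^m m!^2/k^m$ decreases up to $m\approx(k/C)^{1/2}$ and increases afterwards, so the sum is controlled by the number of terms times the two endpoint values; your parenthetical attributing the $Ck$ term to ``contour/normalization constants'' from $k(x-y)\cdot A$ is not where that term comes from.
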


\begin{proof}

The idea is to explicitly express $A_m$'s in terms of $b_m$'s, and then use Theorem \ref{MainLemma}. 

First, we observe that since\begin{equation*}
Sa\sim \sum_{i=0}^\infty\frac{(D_\theta\cdot D_y)^i}{i!k^i}\sum_{j=0}^\infty \frac{a_j}{k^j}
\sim \sum_{m=0}^\infty\sum_{i+j=m}\frac{(D_\theta\cdot D_y)^ia_j}{i!k^m},
\end{equation*}
we have
\begin{equation*}
(Sa)_m=\sum_{i+j=m}\frac{(D_\theta\cdot D_y)^ia_j}{i!}.
\end{equation*}
Using the relation \eqref{eq 5.7}, this becomes
\begin{equation*}
(Sa)_m=\sum_{i+j=m}\frac{(D_\theta\cdot D_y)^i(b_j\Delta_0)}{i!} \hspace{12pt} \mbox{ for any }m\geq 1.
\end{equation*}
So by the relation between $Sa$ and $SA$ in \eqref{eq 5.11}, we get
\begin{equation}\label{pqa}
(x-y)\cdot (SA)_{m+1}=\sum_{i+j=m}\frac{(D_\theta\cdot D_y)^i(b_j\Delta_0)}{i!}.
\end{equation}
We emphasize that a formal asymptotic expansion $A$ satisfying the above equation is not unique, and in fact here, we choose the particular $A$ so that $(SA)_0=0$ and $SA$ satisfies
\begin{equation*}
(SA)_{m+1}(x,y,\theta)=\int_0^1D_y\left(\sum_{i+j=m}\frac{(D_\theta\cdot D_y)^i(b_j\Delta_0)}{i!}\right)(x, tx+(1-t)y, \theta)dt \hspace{12pt} \mbox{ for } m\geq 0. 
\end{equation*}
That why \eqref{pqa} holds for this particular $A$ is evident by the fundamental theorem of calculus. 
Given that we have $SA$, by applying the inverse operator $S^{-1}$, we can obtain $A$ as follows
\begin{align*}
\begin{split}
A_m
=&\sum_{i+j=m}\frac{(-D_\theta\cdot D_y)^i(SA)_j}{i!}
\\
=&\sum_{\substack{i+j=m,\\j\geq 1}}\frac{(-D_\theta\cdot D_y)^i}{i!}
\int_0^1D_y\left(\sum_{s+t=j-1}\frac{(D_\theta\cdot D_y)^s(b_t\Delta_0)}{s!}\right)(x, tx+(1-t)y,\theta)dt.
\end{split}
\end{align*}
Now, by using the Cauchy integral formula twice and Theorem \ref{MainLemma}, we obtain
\begin{align*}
\|A_m & \|_{L^\infty (B^{n}(0,1) \times B^{n}(0,1) \times W_1 )}
\\
\leq &
\sum_{\substack{i+j=m,\\j\geq 1}}C^{2i}i!
\left\|\int_0^1D_x\left(\sum_{s+t=j-1}\frac{(D_\theta\cdot D_y)^s(b_t\Delta_0)}{s!}\right)(x, tx+(1-t)y,\theta)dt\right\|_{L^\infty(B^{n}(0,\frac32)\times B^{n}(0,\frac32) \times W_2 )}
\\
\leq &
\sum_{\substack{i+j=m,\\j\geq 1}}C^{2i}i!
\left\|D_y\left(\sum_{s+t=j-1}\frac{(D_\theta\cdot D_y)^s(b_t\Delta_0)}{s!}\right)(x,y,\theta)\right\|_{L^\infty(B^{n}(0,\frac32)\times B^{n}(0,\frac32) \times W_2)}
\\
\leq &
\sum_{\substack{i+j=m,\\j\geq 1}}C^{2i}i!
\sum_{s+t=j-1}C^{2s+1}s!\|b_t\Delta_0\|_{L^\infty(B^{n}(0,\frac52)  \times B^{n}(0,\frac52)  \times W_3)}
\\
\leq &
\sum_{\substack{i+j=m,\\j\geq 1}}C^{2i}i!
\sum_{s+t=j-1}C^{2s+1}s!C^tt!^2
\\
\leq &
C^m m!^2.
\end{align*} In the above series of estimates $W_1 \Subset W_2 \Subset W_3 \Subset W$, where $\Subset$ is our notation for relative compactness. Also, the constants $C$ are renamed to be $C$ again at each line. 

Using the estimate on $A_m$ and \eqref{Dtheta}, we get
\begin{align*}
\|D_\theta\cdot A_m  & \|_{L^\infty(B^{n}(0,1) \times B^{n}(0,1) \times W_1) }  \\
& \leq  \|a_m\|_{L^\infty(B^{n}(0,1) \times B^{n}(0,1) \times W_1)}
+\|(x-y)\cdot A_{m+1}\|_{L^\infty(B^{n}(0,1) \times B^{n}(0,1) \times W_1)}
\leq C^m m!^2,
\end{align*}
for a new $C$. 

To prove \eqref{Sum}, we note that by the definition of $A^{(N)}$  and \eqref{Am}, we have
\begin{equation*}
 \|A^{(N)}\|_{L^\infty (B^{n}(0,1)\times B^{n}(0,1) \times W_1  )}  \leq \frac{C1!^2}{k}+\frac{C^2 2! ^2}{k^2}+\cdots+\frac{C^N N! ^2}{k^N}.
\end{equation*}
To estimate the right hand side, we need to study the function $\frac{C^x (\Gamma(1+x))^2}{k^x}$. By Sterling's formula this is more or less equivalent to studying the function $ \frac{C^xx^{2x}}{e^{2x}k^x}$. To minimize this function we consider its logarithm 
\begin{equation*}
f(x)=\log \frac{C^xx^{2x}}{e^{2x}k^x}=x\log C+2x\log x-2x-x\log k \hspace{12 pt} \mbox{ for } x\in (0,\infty).
\end{equation*}
Since
\begin{equation*}
f'(x)=\log C+2\log x-\log k, 
\end{equation*}
the only critical point is $x_0=(\frac{k}{C})^{\frac{1}{2}}$, and the function is decreasing on the interval $(0, x_0]$ and increasing on the interval $[x_0, \infty)$. Therefore, if we take $N_0=N_0(k)=[(\frac{k}{C})^{\frac{1}{2}}]$, then by using {Stirling's formula} twice
\begin{align*}\sum_{m=1}^N \frac{C^m m! ^2}{k^m} & \leq e^2  \sum_{m=1}^N \frac{C^m m^{2m+1}}{e^{2m}k^m} \\ & \leq e^2 \left ( N_0^2 + N(N-N_0)\frac{C^N N^{2N}}{e^{2N}k^N} \right ) \\ & \leq e^2 \left ({\frac{k}{C}} + \frac{C^N N^{2N+2}}{e^{2N}k^N} \right ) \\ &  \leq C' \left (k +  \frac{C'^N N!^2}{ k^N} \right ). \end{align*}
\end{proof}

\section{From local to global and the proof of Theorem \ref{Main}}\label{Sec Global}
Let $K_k(x,y)$ be the Bergman kernel of $(M,L^k)$. As we noted before, we also write $K_k(x, y)$ for the representation of the Bergman kernel in the local frame $e^k_L \otimes \oo{e^k_L}$ and we denote $K_{k,y}(x):= K_k(x,y)$. In the last section, we constructed the local Bergman kernel of order $N$, which we denoted by $K_{k}^{(N)}(x,y)=K_{k,y}^{(N)}(x)$. In this section, we show that $K_k(x,y)$ is equal to $K_{k}^{(N)}(x,y)$ up to order $k^{-N}$ when $x,y$ are sufficiently close to each other. Moreover, we will give a precise upper bound for the error term.
\begin{prop}\label{LocalGlobal} There exists $ \delta >0$ such that whenever $d(x,y) < \delta$, we have
\begin{equation} \label{Cx}
K_k(x,y)=K_k^{(N)}(x,y)+ k^{\frac{3n}{2}} \widetilde{\mathcal R}_{N+1}(\phi, k) e^{\frac{k\phi(x)}{2}+\frac{k\phi(y)}{2}},
\end{equation}
where
\begin{equation}\label{C}
| \widetilde{\mathcal R}_{N+1}(\phi, k)| \leq \frac{C^{N+1}(N+1)!^2}{k^{N+1}}, 
\end{equation}
and the constant $C$ is independent of $N$, $x$, $y$, and $k$. 
\end{prop}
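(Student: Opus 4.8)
The plan is to pass from the local Bergman kernel $K_k^{(N)}$, which by Proposition \ref{LocalBergmanKernel} reproduces holomorphic sections on $U$ modulo an error controlled by $\mathcal{R}_{N+1}(\phi,k)$, to the genuine global Bergman kernel $K_k$ via H\"ormander's $\bar\partial$-estimate, following the standard strategy of \cite{BBS} but keeping careful track of the $N$-dependence of all error terms. First I would fix a point $y\in M$ and regard $K_k^{(N)}(\cdot,y)$ (cut off by $\chi$ centered appropriately) as an approximate holomorphic section of $L^k$; the quantity $\bar\partial(\chi K_{k,y}^{(N)})$ is supported away from $y$ (on the annulus where $d\chi\neq 0$) and is exponentially small in $k$ there by the good-contour estimate \eqref{GoodContour}, exactly as in the two integral bounds already carried out in Section \ref{Sec MethodBBS}. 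Solving $\bar\partial v = \bar\partial(\chi K_{k,y}^{(N)})$ with the H\"ormander weighted estimate produces a correction $v$ with $\|v\|_{k\phi} = O(e^{-\delta k})e^{k\phi(y)/2}$ (for a possibly smaller $\delta$), and setting $\widetilde K_{k,y} := \chi K_{k,y}^{(N)} - v$ gives an honest holomorphic section; standard elliptic/subaveraging estimates upgrade the $L^2$ bound on $v$ to a pointwise bound, costing only a polynomial power of $k$, which is the source of the $k^{3n/2}$ prefactor.

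Next I would show that this $\widetilde K_{k,y}$ is close to the true Bergman kernel section $K_{k,y}$. Since $\Pi_k$ is the orthogonal projection onto $H^0(M,L^k)$ and reproduces values of holomorphic sections via $K_k$, one writes $K_{k,y} - \widetilde K_{k,y} = (\Pi_k - \mathrm{id})\widetilde K_{k,y}$ up to the reproducing error: evaluating against any $u\in H^0(M,L^k)$ and using that $\widetilde K_{k,y}$ reproduces $u(y)$ modulo $k^n e^{k\phi(y)/2}\mathcal{R}_{N+1}\|u\|$ (this is where Proposition \ref{LocalBergmanKernel}, hence Theorem \ref{MainLemma}, enters, together with the exponentially small contributions of $v$ and of the cutoff), one gets that $\langle u, K_{k,y}-\widetilde K_{k,y}\rangle_{k\phi}$ is bounded by $k^n \mathcal{R}_{N+1}(\phi,k)\,e^{k\phi(y)/2}\|u\|_{k\phi}$ plus an $O(e^{-\delta k})$ term. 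Taking $u = K_{k,y} - \widetilde K_{k,y}$ itself (which lies in $H^0$, since both pieces are holomorphic after the correction — more precisely $\Pi_k \widetilde K_{k,y}$ differs from $\widetilde K_{k,y}$ by the projection error one is estimating, so one argues by a short fixed-point/bootstrap on the norm) yields $\|K_{k,y}-\widetilde K_{k,y}\|_{k\phi} \leq C k^n \mathcal{R}_{N+1}(\phi,k) e^{k\phi(y)/2}$. Then another subaveraging/elliptic estimate converts this into the pointwise bound $|K_k(x,y) - \widetilde K_k^{(N)}(x,y)| \leq k^{3n/2}\,\widetilde{\mathcal R}_{N+1}(\phi,k)\,e^{k\phi(x)/2+k\phi(y)/2}$ with $|\widetilde{\mathcal R}_{N+1}| \leq C^{N+1}(N+1)!^2/k^{N+1}$, after absorbing the exponentially small terms using Lemma \ref{max product of power and exponential}. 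Finally, since $\widetilde K_{k,y}$ and $K_{k,y}^{(N)}$ differ by the exponentially small $v$ (again absorbed via Lemma \ref{max product of power and exponential}), replacing $\widetilde K_k^{(N)}$ by $K_k^{(N)}$ changes nothing at the claimed order, and restricting to $d(x,y)<\delta$ (so that $y$ lies in the ball around which the local construction was performed and the good-contour inequalities are valid) gives \eqref{Cx}–\eqref{C}.

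The main obstacle I anticipate is the bookkeeping of constants: one must ensure that the single constant $C$ in \eqref{C} can be chosen \emph{independent of $N$, $x$, $y$, and $k$}. The $N$-independence is inherited from Proposition \ref{LocalBergmanKernel} (ultimately from Theorem \ref{MainLemma}), but the passage through H\"ormander's estimate and the subaveraging inequalities introduces $k$-powers and $\delta$-dependent constants that must be shown not to depend on $N$; the cleanest way is to note that the $\bar\partial$-data and the cutoff errors are $N$-\emph{independent} (they come only from $d\chi$ and the good contour, not from the order of the amplitude), so the only $N$-dependence anywhere is the explicit $C^{N+1}(N+1)!^2/k^{N+1}$ factor, and Lemma \ref{max product of power and exponential} guarantees that every exponentially small term is dominated by a term of this exact shape. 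A secondary technical point is making the ``take $u = K_{k,y}-\widetilde K_{k,y}$'' step rigorous: one should either set it up as a Neumann-series/fixed-point argument for $\Pi_k$ applied to the approximate kernel, or invoke the standard fact (as in \cite{BBS}, \cite{MaMaBook}) that an approximate reproducing kernel which is genuinely holomorphic and reproduces up to error $\eta$ in operator norm differs from $K_k$ by $O(\eta)$ in the appropriate norm. Once these are in place, the estimate \eqref{C} follows without further surprises.
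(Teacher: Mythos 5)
Your proposal uses the same two ingredients as the paper's proof---Proposition~\ref{LocalBergmanKernel} and H\"ormander's $L^2$-estimate---and the same intermediate object (your $\widetilde K_{k,y}$ is exactly $\Pi_k(\chi K^{(N)}_{k,y})$, the paper's $\chi K^{(N)}_{k,y}-u_{k,y}$), so the route is essentially the same; the bookkeeping of powers of $k$ (two subaveraging/on-diagonal bounds, each costing $k^{n/2}$, on top of the $k^n$ from Proposition~\ref{LocalBergmanKernel}) also matches the $k^{3n/2}$ in \eqref{Cx}. The one place your writeup differs is in how the closeness of $\widetilde K_{k,y}$ to $K_{k,y}$ is extracted: you test the reproducing error against $u=K_{k,y}-\widetilde K_{k,y}$ and divide, whereas the paper simply applies Proposition~\ref{LocalBergmanKernel} with $u=K_{k,x}$ and pairs it with the conjugate inner product coming from H\"ormander, using the explicit bound $\|K_{k,x}\|_{k\phi}\le Ck^{n/2}e^{k\phi(x)/2}$; the two are interchangeable and of the same strength, but the paper's is a touch more direct. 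One small slip in your reasoning that deserves correction: there is no need for a fixed-point or bootstrap, and the identity ``$K_{k,y}-\widetilde K_{k,y}=(\Pi_k-\mathrm{id})\widetilde K_{k,y}$'' is not right --- since $v$ solves $\bar\partial v=\bar\partial(\chi K^{(N)}_{k,y})$, the section $\widetilde K_{k,y}=\chi K^{(N)}_{k,y}-v$ is already holomorphic, so $\Pi_k\widetilde K_{k,y}=\widetilde K_{k,y}$ exactly and $u=K_{k,y}-\widetilde K_{k,y}\in H^0(M,L^k)$ with no further argument; once this is noted your ``self-testing'' step is a one-line Cauchy--Schwarz, and the rest of the proposal closes correctly, with Lemma~\ref{max product of power and exponential} absorbing the exponentially small cutoff terms exactly as you indicate.
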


\begin{proof}
We fix $x \in M$ and assume that $\phi$ is analytic in $B^n(x, 3)$. Let $\chi$ be a  smooth cut-off function such that
\begin{equation*}
    \chi(z)=
    \begin{cases}
    1 & z\in B^{n}(x,\frac{1}{2})\\
    0 & z\notin B^{n}(x,\frac{3}{4})
    \end{cases}.
\end{equation*}
We assume $y\in B^n(x,\frac{1}{4})$.
We first observe that
\begin{equation}\label{S}
K_k(y,x)=\left(\chi K_{k,x},K^{(N)}_{k,y}\right)_{k\phi}+ \mathcal S_{N+1} (\phi, k) k^{\frac{3n}{2}}e^{k(\frac{\phi(x)}{2}+\frac{\phi(y)}{2})},
\end{equation}
where $|\mathcal S_{N+1} (\phi, k)|\leq \frac{C^{N+1}(N+1)!^2}{k^{N+1}}$.
This is because, by Proposition \ref{LocalBergmanKernel}, we have
\begin{align*}
K_{k,x}(y)=\left(\chi K_{k,x}, K^{(N)}_{k,y}\right)_{k\phi}+k^n \mathcal S_{N+1} (\phi, k)   e^{\frac{k\phi(y)}{2}}\|K_{k,x}\|_{k\phi},
\end{align*}
and by the reproducing property of Bergman kernel, we have
\begin{equation*}
\|K_{k,x}\|_{k\phi} e^{-k\frac{\phi(x)}{2}} \leq  \left |  \|K_{k,x}\|_{L^2(M,L^k)} \right |_{h^k}= \sqrt{| K_k(x,x)|_{h^k}}\leq Ck^{\frac{n}{2}}.
\end{equation*}
That why $| K_k(x,x)|_{h^k} \leq Ck^n$ follows from the extreme property of the Bergman function and also the sub-mean value inequality. For a simple proof see for example Lemma 4.1 of \cite{HKSX}.

Next, we define\begin{equation} \label{u}
u_{k,y}(z)= \chi(z) K_{k,y}^{(N)}(z)-\left(\chi K^{(N)}_{k,y}, K_{k,z}\right)_{k\phi}.
\end{equation} Our goal is to estimate $|u_{k, y}(x)|$. 
Since $\left(\chi K^{(N)}_{k,y}, K_{k,x}\right)_{k\phi}$ is the Bergman projection of $\chi K_{k,y}^{(N)}$, $u_{k,y}$ is the minimal $L^2$ solution to the equation
\begin{equation*}
    \dbar u=\dbar(\chi K_{k,y}^{(N)}).
\end{equation*}
So by using H\"ormander's $L^2$ estimates \cite{Ho} (see  \cite{B} for an exposition), we have
\begin{equation*}
\|u_{k,y}\|_{L^2}^2\leq \frac{C}{k}\|\dbar \left(\chi K_{k,y}^{(N)}\right)\|_{L^2}^2.
\end{equation*}
As $K_{k,y}^{(N)}(z)$ is holomorphic, we have $\dbar(\chi K_{k,y}^{(N)})(z)=\dbar\chi(z) K_{k,y}^{(N)}(z)$. We recall that by \eqref{KandB}
$${K^{(N)}_{k,y}(z)}=\left(\frac{k}{\pi}\right)^n e^{k\psi(z,\bar{y})}B^{(N)}(z,\bar{y}). $$ Since $\dbar \chi(z)$ is supported in $d(z, x) \geq \frac12$ and since $d(x, y) \leq \frac14$, using
$$ \text{Re} \, \psi(z, \bar y) \leq - \delta  |z -y|^2 + \frac{\phi(z)}{2} + \frac{\phi(y)}{2}, $$ we get
\begin{equation*}
\left| \dbar \left(\chi K_{k,y}^{(N)}(z)\right) \right| ^2 \leq \left(\frac{k}{\pi}\right)^{2n}e^{-k{\delta}+k{\phi(y)}+k \phi(z)}\left\|B^{(N)}\right\|^2_{L^{\infty}(U \times U)},
\end{equation*}
for a new $\delta>0$.
Therefore,
\begin{align*}
    \left\|\dbar \left(\chi K_{k,y}^{(N)}\right)\right\|_{L^2}^2
    \leq&
    Ck^{2n}\int_{B(x,1)} e^{-k\delta+k\phi(y)}\left\|B^{(N)}\right\|_{L^{\infty}(U \times U)}\frac{\omega^n}{n!}
    \\
    \leq&
    C k^{2n} e^{-k\delta+k\phi(y)}\left\|B^{(N)}\right\|^2_{L^{\infty}(U \times U)},
\end{align*}
which implies
\begin{equation*}
    \|u_{k,y}\|_{L^2}^2
    \leq
    C k^{2n}e^{-k\delta+k\phi(y)}\|B^{(N)}\|^2_{L^{\infty}(U \times U)}.
\end{equation*}
Thus, using the sub-mean value inequality in $B(x,\frac{1}{\sqrt{k}})$ (with respect to the standard Euclidean volume form $dV_0$) as performed below
\begin{align*}
    \|u_{k,y}\|^2
    \geq& e^{-k\phi(x)}
    \int_{B(x,\frac{1}{\sqrt{k}})} |u_{k,y}(z)|^2 e^{-k\left(\phi(z)-\phi(x)\right)} \frac{\omega^n}{n!}
    \\
    \geq& \frac{e^{-k\phi(x)}}{C}
    \int_{B(x,\frac{1}{\sqrt{k}})} |u_{k,y}(z)|^2 dV_0
    \\
    \geq &
    \frac{e^{-k\phi(x)}}{Ck^n}|u_{k,y}(x)|^2,
\end{align*}
we get
\begin{equation*}
|u_{k,y}(x)|\leq Ck^{\frac{3n}{2}}e^{-k\delta}e^{k\left(\frac{\phi(x)}{2}+\frac{\phi(y)}{2}\right)}\left\|B^{(N)}\right\|_{L^\infty(U \times U)}.
\end{equation*}
We can estimate $\|B^{(N)}\|_{L^\infty(U \times U)}$ using our Theorem \ref{MainLemma}
\begin{align*}
 \|B^{(N)}\|_{L^\infty ( U \times U)} & \leq 1+ \frac{1}{k} \|b_1\|_{L^\infty ( U\times U)} + \dots \frac{1}{k^N} \|b_N\|_{L^\infty (U\times U)} \\ &  \leq 1+\frac{C1!^2}{k}+\frac{C^2 2! ^2}{k^2}+\cdots+\frac{C^N N! ^2}{k^N} \\
& \leq C\left(k+\frac{C^NN!^2}{k^N}\right) .
\end{align*}
Hence, 
$$|u_{k,y}(x)|\leq Ck^{\frac{3n}{2}}e^{-k\delta}e^{k\left(\frac{\phi(x)}{2}+\frac{\phi(y)}{2}\right)}\left(k+\frac{C^NN!^2}{k^N}\right) \leq k^{\frac{3n}{2}}e^{k\left(\frac{\phi(x)}{2}+\frac{\phi(y)}{2}\right)} \frac{C^{N+1} (N+1)!^2}{k^{N+1}} . $$ Combining this estimate with \eqref{S} and recalling the definition of $u_{k, y}$ in \eqref{u}, we get the result. 

We point out that we have renewed the constant $C$ at each step, but the final constant is independent of $k$ and $N$. We also note that the constant $C$ may depend on the point $x$, however by a simple compactness argument one can see that each such $C$ can be bounded by a uniform constant independent of $x$.
\end{proof}
Now we are ready to prove Theorem \ref{Main} and its corollaries. 
\subsection{Proof of Theorem \ref{Main}}

By Proposition \ref{LocalGlobal}, we just need to show that with $N=N_0-1= [ \sqrt{k/C}\,]-1$ we have\footnote{For convenience, we use $N_0$ for $N_0(k)= {[\sqrt{k / C}]}$. }
$$  k^{\frac{3n}{2}} \mathcal R_{N_0}(\phi, k) e^{\frac{k\phi(x)}{2}+\frac{k\phi(y)}{2}} = e^{k\left(\frac{\phi(x)}{2}+\frac{\phi(y)}{2}\right)}e^{-\delta k^{\frac{1}{2}}}O(1). $$
However, by the same proposition we know that 
$$ | R_{N_0}(\phi, k)| \leq \frac{C^{N_0}N_0!^2}{k^{N_0}}.$$ 
Hence it is enough to show that
$$ \frac{C^{N_0}N_0!^2}{k^{N_0}} =  e^{-\delta k^{\frac{1}{2}}} O(1).$$ 
By Stirling's formula
\begin{equation*}
\frac{C^{N_0}N_0!^2}{k^{N_0}}
\leq C' N_0\frac{C^{N_0}{N_0}^{2N_0}}{e^{2N_0}k^{N_0}}
 \leq C' N_0e^{-2N_0}
\leq C''k^{\frac{1}{2}}e^{- \frac{2}{\sqrt{C}} k^{\frac{1}{2}}}.
\end{equation*}
Since, $$k^{\frac{3n}{2}+\frac12} e^{- \frac{2}{\sqrt{C}} k^{\frac{1}{2}}} \leq C''' e^{- \frac{1}{\sqrt{C}} k^{\frac{1}{2}}},$$ $\delta =\frac{1}{\sqrt{C}}$ would do the job. The estimate on the derivatives follow immediately by using the Cauchy integral formula over the boundary of a polydisc with radius $\frac{1}{k}$, since the kernels $K_k(x,y)$ and $K_k^{(N)}(x,y)$ are both holomorphic in $x$ and $\bar{y}$  

\subsection{Proof of Corollary \ref{complete asymptotics}}
\begin{proof}
	By Theorem \ref{Main}, uniformly for any $x,y\in U$, we have 
	\begin{equation*}
	K_k(x,y)=e^{k\psi(x,\bar y)}\frac{k^n}{\pi^n}\left (1+\sum _{j=1}^{ N_0(k)-1}\frac{b_j(x, \bar y)}{k^j}\right )+e^{k\left(\frac{\phi(x)}{2}+\frac{\phi(y)}{2}\right)}e^{-\delta k^{\frac{1}{2}}}O(1).
	\end{equation*} 
For any given positive integer $N$, we rewrite the above formula as follows.
	\begin{equation*}
	K_k(x,y)=e^{-k\psi(x,\bar{y})}\frac{k^n}{\pi^n}\left(1+\sum_{j=1}^{N-1}\frac{b_j(x,\bar{y})}{k^j}+\sum_{j=N}^{N_0(k)-1}\frac{b_j(x,\bar{y})}{k^j}+e^{\frac{k}{2}\left(\phi(x)+\phi(y)-2\psi(x,\bar{y})\right)}e^{-\delta k^{\frac{1}{2}}}O(1)\right).
	\end{equation*}
	Our first observation is that, 
	\begin{align*}
	\left|e^{\frac{k}{2}\left(\phi(x)+\phi(y)-2\psi(s,\bar{y})\right)}e^{-\delta k^{\frac{1}{2}}}\right|=e^{\frac{k}{2}D(x,y)-\delta k^{\frac{1}{2}}}
	\leq e^{-\frac{3}{4}\delta k^{\frac{1}{2}}}.
	\end{align*}
	Now we estimate the term $\sum_{j=N}^{N_0-1}\frac{b_j(x,\bar{y})}{k^j}$. By Stirling's formula, we have
	\begin{align*}
	\left|\frac{b_j(x,\bar{y})}{k^j}\right|\leq \frac{C^jj!^2}{k^j}\leq C'j\frac{C^jj^{2j}}{e^{2j}k^j}.
	\end{align*}
	Since $\frac{C^jj^{2j}}{e^{2j}k^j}$ is monotonically decreasing for $1\leq j\leq N_0(k)-1$ (with the help of Stirling's formula once more), we get
	\begin{align*}
	\left|\sum_{j=N}^{N_0-1}\frac{b_j(x,\bar{y})}{k^j}\right|
	\leq& \frac{C^NN!}{k^N}+\sum_{j=N+1}^{N_0-1}C'j\frac{C^jj^{2j}}{e^{2j}k^j}
	\\
	\leq &\frac{C^NN!}{k^N}+C'N_0^2\frac{C^{N+1}(N+1)^{2(N+1)}}{e^{2(N+1)}k^{N+1}}
	\\
	\leq &\frac{C''^{N}N!^2}{k^N}.
	\end{align*}
	Therefore,
	\begin{align*}
	K_k(x,y)
	=&e^{-k\psi(x,\bar{y})}\frac{k^n}{\pi^n}\left(1+\sum_{j=1}^{N-1}\frac{b_j(x,\bar{y})}{k^j}+\frac{C''^NN!^2}{k^N}+e^{-\frac{3}{4}\delta k^{\frac{1}{2}}}O(1)\right).
	\end{align*}
	But by a similar argument as in Lemma \ref{max product of power and exponential},
	\begin{align*}
	e^{-\frac{3}{4}\delta k^{\frac{1}{2}}}\leq \left(\frac{4}{3\delta}\right)^{2N}\frac{(2N)!}{k^N}
	\leq \left(\frac{8}{3\delta}\right)^{2N}\frac{N!^2}{k^N},
	\end{align*}
and the result follows. 
\end{proof}

\subsection{Proof of Corollary \ref{Cor1}}
By Theorem \ref{Main}, we have
\begin{equation*}
K_k(x,y)=e^{k\psi(x, \bar y)}\frac{k^n}{\pi^n}\big(1+\sum _{j=1}^{N_0-1}\frac{b_j(x,\bar y)}{k^j}\big)+e^{\frac{k\phi(x)}{2}+\frac{k\phi(y)}{2}}e^{-\delta k^{\frac{1}{2}}} O(1).
\end{equation*}
Recall that $D(x,y)=\phi(x)+\phi(y)-\psi(x,\bar y)-\psi(y, \bar x)$.
Then
\begin{equation*}
\log|K_k(x,y)|_{h^k}=-\frac{kD(x,y)}{2}+n\log k -n\log \pi +\log\left|1+\sum _{j=1}^{N_0-1}\frac{b_j(x, \bar y)}{k^j}+e^{\frac{Q(x,y)}{2}k-\delta k^{1/2}}O(1)\right|,
\end{equation*}
where $Q(x,y)=\phi(x)+\phi(y)-2\psi(x,\bar{y})$.
So it is sufficient to prove
$$
\log\left|1+\sum _{j=1}^{N_0-1}\frac{b_j(x,\bar{y})}{k^j}+e^{\frac{Q(x,y)}{2}k-\delta k^{1/2}}O(1)\right|
=\log\left(1+O \left ( \frac{1}{k} \right )\right). $$
To do this we note that by our assumption $D(x, y) \leq  \frac{1}{2} \delta k^{-\frac12}$, hence
\begin{equation*}
\left|e^{\frac{Q(x,y)}{2}k-\delta k^{1/2}}\right|=
e^{\frac{D(x,y)}{2}k-\delta k^{1/2}}
\leq e^{-\frac{3\delta}{4}k^{1/2}}.
\end{equation*}
It remains to show that
\begin{equation*}
\left|\sum _{j=1}^{N_0-1}\frac{b_j(x,\bar{y})}{k^j}\right|=O \left (\frac{1}{k} \right ).
\end{equation*}
By the estimates on $b_j(x,\bar{y})$ from Theorem \ref{MainLemma} and Stirling's formula, we have
$$\frac{|b_j(x,\bar{y})|}{k^j}\leq \frac{C^jj!^2}{k^j}\leq C'j\frac{C^j j^{2j}}{e^{2j}k^j}.$$
As shown in Lemma \ref{error term}, the function $f(x)=\log \frac{C^xx^{2x}}{e^{2x}k^x}$ is decreasing on the interval $(0,(\frac{k}{C})^{\frac{1}{2}}]$, thus for $j\in [2,N_0-1]$,
$$\frac{|b_j(x,\bar{y})|}{k^j}\leq C'(N_0-1)\frac{C^2 2^{4}}{e^{4}k^2}\leq C'C^2\frac{N_0}{k^2}.$$
Therefore, 
\begin{equation*}
\left|\sum _{j=1}^{N_0-1}\frac{b_j(x,\bar{y})}{k^j}\right|\leq \frac{C}{k}+C'C^2\frac{N_0^2}{k^2}\leq \frac{C}{k}+C'C\frac{1}{k}=O \left (\frac{1}{k} \right ).
\end{equation*}

\section{Estimates on Bergman Kernel Coefficients}\label{Sec ProofofMainLemma}
As before, we assume the \k metric is analytic in a neighborhood $V$ of $p$. We will estimate the growth rate of the Bergman kernel coefficients $b_m(x,z)$ as $m\rightarrow \infty$ for $x,z$ in some open set $U\subset V$ containing $p$. Our goal is to prove Theorem \ref{MainLemma}. 

The key ingredient for the proof is the following recursive formula\footnote{We discussed its proof in \eqref{Recursive1}. } on $b_m(x,z)$ established in  \cite{BBS}.
\begin{equation}\label{Recursive}
	b_m(x,z(x,x,\theta))=-\sum_{l=1}^m\frac{(D_y\cdot D_\theta)^l}{l!}\big(b_{m-l}\left(x,z(x,y,\theta)\right)\Delta_0(x,y,\theta)\big)|_{y=x}.
\end{equation}
We will break the proof of Theorem \ref{MainLemma} into two steps. The first step is to derive from the recursive formula \eqref{Recursive}, a recursive inequality on $\|D^\xi_zb_m(x,z)\|_{L^\infty(U\times U)}$ for any multi-index $\xi \in (\mathbb{Z}^{\geq 0})^n$. The second step is to estimate $\|D^\xi_zb_m(x,z)\|_{L^\infty(U\times U)}$ by induction. 

In the following we shall use the following standard notations for multi-indicies. 

\begin{itemize}
\item $ \mathbbm 1 = (1, 1, \cdots, 1)$.
\item $|\alpha|=\alpha_1+\alpha_2+\cdots+\alpha_n$.
\item $ \alpha \leq \beta $ if $\alpha_1\leq \beta_1, \alpha_2\leq \beta_2,\cdots,\alpha_n\leq \beta_n$.
\item $\alpha< \beta$ if $ \alpha \leq \beta$ and $\alpha \ne \beta$.
\item $\binom{\alpha}{\beta}=\binom{\alpha_1}{\beta_1}\binom{\alpha_2}{\beta_2}\cdots \binom{\alpha_n}{\beta_n}$.
\item $ \alpha !=\alpha_1!\alpha_2!\cdots \alpha_n!$.
\item $\binom{l}{\delta_1,\delta_2,\cdots,\delta_n}=\frac{l!}{\delta_1!\delta_2!\cdots\delta_n!}$ for any non-negative integer $l$ and multi-index $\delta\geq 0$ such that $|\delta|=l$.
\item $D_y^\alpha=D_{y_1}^{\alpha_1}D_{y_2}^{\alpha_2}\cdots D_{y_n}^{\alpha_n}$.
\end{itemize}

\begin{lemma} \label{bmLemmaAnalytic}
Suppose $\phi$ is real analytic in some neighborhood $V$ of $p$. Then there exist some positive constant $C\geq 1$ and an open set $U\subset V$ containing $p$, such that for any non-negative integer $m$ and multi-index $\xi\geq 0$
\begin{align}\label{bmAnalytic}
	b_{m,\xi}
	\leq\sum_{l=1}^m
	\sum_{|\delta|=l}\delta!
	\sum_{\alpha,\beta\leq \delta}
	\sum_{ |\gamma| \leq |\alpha+\beta|}
	\sum_{\xi_0\leq \xi}
	\frac{b_{m-l,\gamma+\xi_0}}{\gamma!}\frac{\xi!}{\xi_0!}
	\binom{\alpha+|\gamma|\mathbbm{1}}{|\gamma|\mathbbm{1}}\binom{\beta+|\gamma|\mathbbm{1}}{|\gamma|\mathbbm{1}}C^{|\xi-\xi_0|+2|\delta|+|\gamma|+1},
\end{align}
where 
\begin{equation*}
b_{m,\xi}:=\|D_z^\xi b_m(x,z)\|_{L^\infty(U\times U)}.
\end{equation*}
\end{lemma}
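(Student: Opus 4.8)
The plan is to start from the recursive formula \eqref{Recursive} and apply the multivariate Faà di Bruno / Leibniz machinery to the operator $(D_y\cdot D_\theta)^l$, then differentiate $l$ times in $z$ and estimate everything in $L^\infty$ on a slightly shrunk polydisc using the Cauchy estimates. Concretely, first I would expand $(D_y\cdot D_\theta)^l$ via the multinomial theorem as $\sum_{|\delta|=l}\binom{l}{\delta}D_\theta^\delta D_y^\delta$ acting on the product $b_{m-l}(x,z(x,y,\theta))\Delta_0(x,y,\theta)$, and then apply $D_z^\xi$ to the whole identity \eqref{Recursive} (noting that on the left side $z=z(x,x,\theta)$ depends on $x$ and $\theta$, not on a free $z$, so actually one differentiates the composite function — this is a point that needs care, see below). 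The Leibniz rule splits the $\theta$- and $y$-derivatives between the factor $b_{m-l}(x,z(x,y,\theta))$ and the factor $\Delta_0$; the chain rule applied to $b_{m-l}(x,z(x,y,\theta))$ produces derivatives $D_z^\gamma b_{m-l}$ composed with powers of the (real-analytic, hence Cauchy-estimable) derivatives of the map $(x,y,\theta)\mapsto z(x,y,\theta)$, which is exactly where the term $b_{m-l,\gamma+\xi_0}/\gamma!$ and the binomial factors $\binom{\alpha+|\gamma|\mathbbm 1}{|\gamma|\mathbbm 1}$, $\binom{\beta+|\gamma|\mathbbm 1}{|\gamma|\mathbbm 1}$ come from — they encode the number of ways the $\alpha$ (resp. $\beta$) derivatives in $y$ (resp. $\theta$) can be distributed among the $|\gamma|$ inner copies of $z$.

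The key bookkeeping steps, in order, are: (1) write \eqref{Recursive}, apply $D_z^\xi$, and use multivariate Leibniz to split each $D_\theta^\delta D_y^\delta$-derivative between $b_{m-l}\circ z$ and $\Delta_0$, introducing the sums over $\alpha,\beta\le\delta$; (2) apply the multivariate chain rule (Faà di Bruno) to $D_\theta^{\alpha}D_y^{\alpha'}D_z^{\xi}$ of the composite $b_{m-l}(x,z(x,y,\theta))$, producing an inner $D_z^{\gamma+\xi_0}b_{m-l}$ with $|\gamma|\le|\alpha|+|\alpha'|$ and $\xi_0\le\xi$, and controlling the combinatorial coefficients by the stated binomials; (3) bound all the "geometric" factors — the derivatives of $z(x,y,\theta)$, of $\Delta_0$, and the remaining $\xi-\xi_0$ derivatives — by $C^{|\xi-\xi_0|+2|\delta|+|\gamma|+1}$, which is possible because $\phi$ is real analytic so $\psi$, $\theta$, $z$, and $\Delta_0$ are all holomorphic on a fixed neighborhood of the origin and hence their derivatives grow at most geometrically (Cauchy estimates on a fixed smaller polydisc $U\times U$); (4) absorb the multinomial coefficient $\binom{l}{\delta}=l!/\delta!$ together with the $1/l!$ from \eqref{Recursive} into the factor $\delta!$ appearing in front (here $\delta!$ enters because $\binom{l}{\delta}/l!=1/\delta!$, and then one more $\delta!$ or so gets absorbed when counting chain-rule terms — the precise accounting is what produces exactly $\delta!$ rather than $1/\delta!$). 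Collecting all of this gives precisely \eqref{bmAnalytic}.

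The main obstacle I expect is step (2) combined with the care needed in step (1) about \emph{where} the left-hand side is being differentiated. On the left of \eqref{Recursive} we have $b_m$ evaluated at $z=z(x,x,\theta)$; to get a clean recursion for $b_{m,\xi}=\|D_z^\xi b_m(x,z)\|_{L^\infty(U\times U)}$ one wants to differentiate $b_m(x,z)$ in the \emph{free} variable $z$ and only afterwards restrict. Since $(x,y,\theta)$ and $(x,y,z)$ are interchangeable biholomorphic coordinates (as recalled before \eqref{theta}), one can instead run the whole Leibniz/chain-rule computation in the $(x,y,z)$ coordinates, where $z$ is free, and only at the very end set $y=x$; this sidesteps the issue but requires translating the operator $(D_y\cdot D_\theta)^l$ into the $z$-coordinates, which re-introduces Jacobian factors $\theta_z$, again holomorphic and Cauchy-estimable. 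The genuinely delicate part is then making sure the combinatorial coefficients from iterated chain rule are dominated by the clean product of two binomial coefficients stated in the lemma rather than by something larger; this is a standard but fiddly Faà di Bruno estimate, and getting the powers of $C$ to come out as exactly $|\xi-\xi_0|+2|\delta|+|\gamma|+1$ (rather than, say, with an extra factor depending on $l$) is where most of the real work lies. Everything else — the geometric bounds on the analytic data, the absorption of factorials — is routine once the analyticity of $\phi$ is in hand.
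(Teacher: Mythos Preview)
Your proposal is essentially the paper's proof: multinomial expansion of $(D_y\cdot D_\theta)^l$, Leibniz to split between $b_{m-l}\circ z$ and $\Delta_0$, a Fa\`a di Bruno/Taylor computation for the composite, Cauchy estimates on the analytic data, and counting the partitions by $\binom{\alpha+|\gamma|\mathbbm 1}{|\gamma|\mathbbm 1}\binom{\beta+|\gamma|\mathbbm 1}{|\gamma|\mathbbm 1}$. The factorial accounting you sketch is also the right one: $\tfrac{1}{l!}\binom{l}{\delta}\binom{\delta}{\alpha}\binom{\delta}{\beta}\alpha!\beta!(\delta-\alpha)!(\delta-\beta)!=\delta!$, with the $(\delta-\alpha)!(\delta-\beta)!$ absorbed into the Cauchy bounds on $D_y^{\delta-\alpha}D_\theta^{\delta-\beta}\Delta_0$.

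The one place where your ordering is more awkward than necessary is the ``main obstacle'' you flag. The paper does \emph{not} translate $(D_y\cdot D_\theta)^l$ into $(x,y,z)$-coordinates. Instead it carries out the entire expansion of the right side of \eqref{Recursive} in $(x,y,\theta)$-coordinates, restricts to $y=x$, and \emph{then} uses the diagonal biholomorphism $(x,x,\theta)\leftrightarrow(x,z)$ given by $\theta=\psi_x(x,z)$. The point is that $z(x,x,\psi_x(x,z))=z$ identically, so after this substitution the factors $D_z^\gamma b_{m-l}(x,z)$ sit there as functions of the \emph{free} variable $z$, while all the geometric factors become holomorphic functions of $(x,z)$ of the form $(D_y^{\alpha_{ij}}D_\theta^{\beta_{ij}}z_i)(x,x,\psi_x(x,z))$ and $(D_y^{\delta-\alpha}D_\theta^{\delta-\beta}\Delta_0)(x,x,\psi_x(x,z))$. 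Only at this stage does one apply $D_z^\xi$, split it via Leibniz over $\xi_0\le\xi$ (putting $D_z^{\xi_0}$ on $D_z^\gamma b_{m-l}$ and $D_z^{\xi-\xi_0}$ on the geometric factors), and invoke Cauchy on the latter. This avoids any Jacobians $\theta_z$ in the differential operator and is what makes the exponent come out exactly as $|\xi-\xi_0|+2|\delta|+|\gamma|+1$.
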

\begin{proof}
We first work on $(D_y\cdot D_\theta)^l\Big(b_{m-l}\left(x,z\left(x,y,\theta\right)\right)\Delta_0\left(x,y,\theta\right)\Big)$. We expand the operator $(D_y\cdot D_\theta)^l$ and obtain
\begin{align}\label{DyDtheta}
\begin{split}
	(D_y\cdot D_\theta)^l  & \left(b_{m-l}\left(x,z\left(x,y,\theta\right)\right)\Delta_0\left(x,y,\theta\right)\right)\\
	=&\sum_{|\delta|=l}\binom{l}{\delta_1,\delta_2,\cdots,\delta_n}
	D_y^\delta D_\theta^\delta
	\big(b_{m-l}(x,z(x,y,\theta))\Delta_0(x,y,\theta)\big)\\
	=&\sum_{|\delta|=l}\binom{l}{\delta_1,\delta_2,\cdots,\delta_n}
	\sum_{\alpha,\beta\leq \delta}\binom{\delta}{\alpha}\binom{\delta}{\beta}D_y^\alpha D_\theta^\beta
	\big(b_{m-l}(x,z(x,y,\theta))\big)D_y^{\delta-\alpha}D_\theta^{\delta-\beta}\Delta_0
\end{split}
\end{align}
To compute $D_y^\alpha D_\theta^\beta
	\big(b_{m-l}(x,z(x,y,\theta))\big)$, or equivalently (up to a factor $\alpha ! \beta !$) the Taylor coefficients of $b_{m-l}(x,z(x,y,\theta))$ at arbitrary points, we use the following more general strategy by means of formal Taylor expansions. Assume
\begin{align*}
	&f(z)\sim \sum_{\gamma\geq 0} a_\gamma(z-z_0)^\gamma,
	\\
	&z(y,\theta)\sim \sum _{\alpha,\beta\geq 0} a_{\alpha\beta} (y-y_0)^\alpha(\theta-\theta_0)^\beta,
\end{align*}
where $a_{\alpha\beta}=(a_{\alpha\beta}^1,a_{\alpha\beta}^2,\cdots,a_{\alpha\beta}^n)$ is a vector and $z(y_0,\theta_0)=z_0$.
Then we calculate the Taylor series of the composition $f(z(y,\theta))$ as follows.
\begin{align*}
	f(z(y,\theta))
	\sim& \sum_{\gamma\geq 0} a_\gamma\left(\sum _{\alpha,\beta\geq 0} a_{\alpha\beta} (y-y_0)^\alpha(\theta-\theta_0)^\beta-z_0\right)^\gamma
	\\
	=&\sum_{\gamma\geq 0} a_\gamma\left(\sum _{\alpha+\beta> 0} a_{\alpha\beta} (y-y_0)^\alpha(\theta-\theta_0)^\beta\right)^\gamma
	\\
	=&\sum_{\gamma\geq 0} a_\gamma\left(\sum _{\alpha+\beta> 0} a_{\alpha\beta}^1 (y-y_0)^\alpha(\theta-\theta_0)^\beta\right)^{\gamma_1}
	\cdots
	\left(\sum _{\alpha+\beta> 0}a_{\alpha\beta}^n (y-y_0)^\alpha(\theta-\theta_0)^\beta\right)^{\gamma_n} .
	\end{align*}
	By taking advantage of the following set of indices for convenience,
	$$I(\alpha,\beta,i)= \left \{ \{ \alpha_{ij} \}_{1 \leq j \leq \gamma_i},  \{ \beta_{ij} \}_{1 \leq j \leq \gamma_i}: \quad  \alpha_{ij}+\beta_{ij}>0   \right \},$$
we get
	\begin{align*}
	f(z(y,\theta))=&\sum_{\gamma\geq 0} a_\gamma
	\sum _{I(\alpha,\beta,1)} a^1_{\alpha_{11}\beta_{11}}a^1_{\alpha_{12}\beta_{12}}\cdots
	a^1_{\alpha_{1\gamma_1}\beta_{1\gamma_1}} (y-y_0)^{\alpha_{11}+\alpha_{12}+\cdots+\alpha_{1\gamma_1}}(\theta-\theta_0)^{\beta_{11}+\beta_{12}+\cdots+\beta_{1\gamma_1}}
	\\
	&\cdots
	\sum _{I(\alpha,\beta,n)} a^n_{\alpha_{n1}\beta_{n1}}a^n_{\alpha_{n2}\beta_{n2}}\cdots
	a^n_{\alpha_{n\gamma_n}\beta_{n\gamma_n}} (y-y_0)^{\alpha_{n1}+\alpha_{n2}+\cdots+\alpha_{n\gamma_n}}(\theta-\theta_0)^{\beta_{n1}+\beta_{n2}+\cdots+\beta_{n\gamma_n}}
	\\
	=&a_0+\sum_{\gamma>0} a_\gamma
	\sum _{I(\alpha,\beta,1)}
	\cdots
	\sum _{I(\alpha,\beta,n)}
	a^1_{\alpha_{11}\beta_{11}}a^1_{\alpha_{12}\beta_{12}}\cdots
	a^1_{\alpha_{1\gamma_1}\beta_{1\gamma_1}}\cdots
	a^n_{\alpha_{n1}\beta_{n1}}a^n_{\alpha_{n2}\beta_{n2}}\cdots
	a^n_{\alpha_{n\gamma_n}\beta_{n\gamma_n}}
	\\
	&   \qquad \cdot (y-y_0)^{\alpha_{11}+\alpha_{12}+\cdots\alpha_{n\gamma_n}}
	(\theta-\theta_0)^{\beta_{11}+\beta_{12}+\cdots+\beta_{n\gamma_n}}
	\\
	=&a_0+\sum_{\alpha+\beta> 0}
	\sum_{1\leq |\gamma| \leq |\alpha+\beta|}
	a_\gamma
	\sum_{A(\alpha,\beta,\gamma)}\prod_{i,j} 
	a^i_{\alpha_{ij}\beta_{ij}}(y-y_0)^{\alpha}(\theta-\theta_0)^{\beta}.
	\end{align*}
Here, the last sum runs over the set $A_{\alpha \beta \gamma}$ defined by\footnote{By our notation, $\alpha_{ij}$ and $\beta_{ij}$ are vectors that have nothing to do with $\alpha$ and $\beta$. }
$$A_{\alpha \beta \gamma} = \left \{ \{ \alpha_{ij} \}_{1\leq i \leq n, 1 \leq j \leq \gamma_i},  \{ \beta_{ij} \}_{1\leq i \leq n, 1 \leq j \leq \gamma_i}: \quad  \begin{array}{ll} \sum_{1\leq i \leq n, 1 \leq j \leq \gamma_i} \alpha_{ij}=\alpha, \\ \sum_{1\leq i \leq n, 1 \leq j \leq \gamma_i} \beta_{ij}=\beta,  \\ \alpha_{ij}+\beta_{ij}>0  \end{array}  \right \}.$$
Therefore, the coefficient of $(y-y_0)^{\alpha}(\theta-\theta_0)^{\beta}$ is
	\begin{equation*}
   \sum_{1\leq |\gamma| \leq |\alpha+\beta|}
   a_\gamma
   \sum_{A(\alpha,\beta,\gamma)}\prod_{i,j} 
   a^i_{\alpha_{ij}\beta_{ij}}
	\end{equation*}
when $\alpha+\beta>0$, and is $a_0$ when $\alpha=\beta=0$.

Applying this formula to $b_{m-l}(x,z(x,y,\theta))$ and plugging it into \eqref{DyDtheta}, we get
\begin{align}\label{DDl}
\begin{split}
	(D_y\cdot D_\theta)^l  & \left(b_{m-l}\left(x,z\left(x,y,\theta\right)\right)\Delta_0\left(x,y,\theta\right)\right) \\
 &= b_{m-l}(x,z(x,y,\theta))(D_y\cdot D_\theta)^l\Delta_0 
	\\ & +  \sum_{|\delta|=l}\binom{l}{\delta_1,\delta_2,\cdots,\delta_n}
	\sum_{\substack{\alpha,\beta\leq \delta\\ \alpha+\beta>0}}\binom{\delta}{\alpha}\binom{\delta}{\beta}\alpha!\beta!
	\sum_{1\leq |\gamma| \leq |\alpha+\beta|}
	\frac{D_z^\gamma b_{m-l}(x,z)}{\gamma!}
	\\&\cdot
	\sum_{A_{\alpha \beta \gamma}} \prod_{i,j} 
	\frac{D_y^{\alpha_{ij}}D_\theta^{\beta_{ij}}z_i}{\alpha_{ij}!\beta_{ij}!}\cdot
	D_y^{\delta-\alpha}D_\theta^{\delta-\beta}\Delta_0.
\end{split}
\end{align}
We now substitute \eqref{DDl} into equation \eqref{Recursive} and obtain
\begin{align*}
\begin{split}
	b_m&(x,z(x,x,\theta))
	\\
	=&-\sum_{l=1}^m\frac{1}{l!}
	\Bigg(b_{m-l}(x,z(x,x,\theta))(D_y\cdot D_\theta)^l\Delta_0 (x,x,\theta)
	+ \sum_{|\delta|=l}\sum_{\substack{\alpha,\beta\leq \delta\\ \alpha+\beta>0}}\binom{l}{\delta_1,\delta_2,\cdots,\delta_n}
		\binom{\delta}{\alpha}\binom{\delta}{\beta}\alpha!\beta!
	\\&
	\cdot
	\sum_{1\leq |\gamma| \leq |\alpha+\beta|}
	\frac{D_z^\gamma b_{m-l}(x,z(x,x,\theta))}{\gamma!}
	\sum_{A_{\alpha \beta \gamma}} \prod_{i,j} 
	\frac{D_y^{\alpha_{ij}}D_\theta^{\beta_{ij}}z_i}{\alpha_{ij}!\beta_{ij}!}(x,x,\theta)
	D_y^{\delta-\alpha}D_\theta^{\delta-\beta}\Delta_0(x,x,\theta)\Bigg).
\end{split}
\end{align*}
The correspondence $(x,x,z)\leftrightarrow (x,x,\theta=\psi_x(x,z))$, turns this into
\begin{align*}
\begin{split}
	b_m&(x,z)
	\\
	=&-\sum_{l=1}^m\frac{1}{l!}
	\Bigg(b_{m-l}(x,z)(D_y\cdot D_\theta)^l\Delta_0 (x,x,\psi_x(x,z))
	+ \sum_{|\delta|=l}\sum_{\substack{\alpha,\beta\leq \delta\\ \alpha+\beta>0}}\binom{l}{\delta_1,\delta_2,\cdots,\delta_n}
	\binom{\delta}{\alpha}\binom{\delta}{\beta}\alpha!\beta!
	\\&
	\cdot
	\sum_{1\leq |\gamma| \leq |\alpha+\beta|}
	\frac{D_z^\gamma b_{m-l}(x,z)}{\gamma!}
	\sum_{A_{\alpha \beta \gamma}} \prod_{i,j} 
	\frac{D_y^{\alpha_{ij}}D_\theta^{\beta_{ij}}z_i}{\alpha_{ij}!\beta_{ij}!}(x,x,\psi_x(x,z))
	D_y^{\delta-\alpha}D_\theta^{\delta-\beta}\Delta_0(x,x,\psi_x(x,z))\Bigg).
\end{split}
\end{align*}
Note that in this recursive formula, the coefficients $b_m$ depend on not only the previous coefficients $b_{m-l}$, but also the derivatives of $b_{m-l}$. Hence, we need to include $D_z^\xi b_m$ in our inductive argument. To do this we apply $D_z^\xi$ on both sides and obtain a recursive formula for the derivatives of $b_m$.
\begin{align}\label{bmrecursive}
\begin{split}
D_z^\xi b_m(x,z)
=&-\sum_{l=1}^m\frac{1}{l!}\sum_{\xi_0\leq \xi}\binom{\xi}{\xi_0}
\Bigg(D_z^{\xi_0}b_{m-l}(x,z)D_z^{\xi-\xi_0}\left((D_y\cdot D_\theta)^l\Delta_0 (x,x,\psi_x(x,z))\right)
\\&
+ \sum_{|\delta|=l}\sum_{\substack{\alpha,\beta\leq \delta\\ \alpha+\beta>0}}\binom{l}{\delta_1,\delta_2,\cdots,\delta_n} \delta!^2
\sum_{1\leq |\gamma| \leq |\alpha+\beta|}
\frac{D_z^{\gamma+\xi_0} b_{m-l}(x,z)}{\gamma!}
\\&
\cdot
D_z^{\xi-\xi_0}\left(\sum_{A_{\alpha \beta \gamma}} \prod_{i,j} 
\frac{D_y^{\alpha_{ij}}D_\theta^{\beta_{ij}}z_i}{\alpha_{ij}!\beta_{ij}!}(x,x,\psi_x(x,z))
\frac{D_y^{\delta-\alpha}D_\theta^{\delta-\beta}\Delta_0}{(\delta-\alpha)!(\delta-\beta)!}(x,x,\psi_x(x,z))\right)\Bigg).
\end{split}
\end{align}

As $z(x,y,\psi_x(x,z)), \Delta_0(x,y,\psi_x(x,z))$ are holomorphic, by the Cauchy integral formula, there exists a fixed neighborhood $U$ such that for any $x,z\in U$, we have
	\begin{align*}
	& \left|	D_z^{\xi-\xi_0}  \left(\sum_{A_{\alpha \beta \gamma}}  \prod_{i,j} 
	\frac{D_y^{\alpha_{ij}}D_\theta^{\beta_{ij}}z_i}{\alpha_{ij}!\beta_{ij}!}(x,x,\psi_x(x,z))
	\frac{D_y^{\delta-\alpha}D_\theta^{\delta-\beta}\Delta_0}{(\delta-\alpha)!(\delta-\beta)!}(x,x,\psi_x(x,z))\right)\right|
	\\
	& \qquad \qquad \leq
	\sum_{A(\alpha,\beta,\gamma)} C^{|\xi-\xi_0|+2|\delta|+|\gamma|+1}(\xi-\xi_0)!
	\\
	& \qquad  \qquad \leq\binom{\alpha+|\gamma|\mathbbm{1}}{|\gamma|\mathbbm{1}}\binom{\beta+|\gamma|\mathbbm{1}}{|\gamma|\mathbbm{1}}C^{|\xi-\xi_0|+2|\delta|+|\gamma|+1}(\xi-\xi_0)!.
	\end{align*}
Similarly, 
\begin{align*}
	\left|D_z^{\xi-\xi_0}(D_y\cdot D_\theta)^l\Delta_0 (x,x,\psi_x(x,z))\right|
	\leq \sum_{|\delta|=l}\binom{l}{\delta_1,\delta_2,\cdots,\delta_n}C^{|\xi-\xi_0|+2|\delta|+1}\delta!^2(\xi-\xi_0)!.
\end{align*}
Recall that 
\begin{equation*}
b_{m,\xi}=\|D_z^\xi b_m(x,z)\|_{L^\infty(U\times U)}.
\end{equation*}
Then \eqref{bmrecursive} implies the following inequality 	\begin{align*}
	b_{m,\xi}
            & \leq \sum_{l=1}^m
	\sum_{|\delta|=l}\delta!
         \sum_{\xi_0\leq \xi}\binom{\xi}{\xi_0}
	 b_{m-l,\xi_0}
	C^{|\xi-\xi_0|+2|\delta|+1}(\xi-\xi_0)! \\
	+&\sum_{l=1}^m
	\sum_{|\delta|=l}\delta!
	\sum_{\substack{\alpha,\beta\leq \delta \\ \alpha+ \beta >0}}
	\sum_{1\leq |\gamma| \leq |\alpha+\beta|}
	\sum_{\xi_0\leq \xi}\binom{\xi}{\xi_0}
	\frac{b_{m-l,\gamma+\xi_0}}{\gamma!}
	\binom{\alpha+|\gamma|\mathbbm{1}}{|\gamma|\mathbbm{1}}\binom{\beta+|\gamma|\mathbbm{1}}{|\gamma|\mathbbm{1}}C^{|\xi-\xi_0|+2|\delta|+|\gamma|+1}(\xi-\xi_0)! \\
& \leq \sum_{l=1}^m
	\sum_{|\delta|=l}\delta!
	\sum_{\substack{\alpha,\beta\leq \delta}}
	\sum_{|\gamma| \leq |\alpha+\beta|}
	\sum_{\xi_0\leq \xi}\frac{\xi!}{\xi_0!}
	\frac{b_{m-l,\gamma+\xi_0}}{\gamma!}
	\binom{\alpha+|\gamma|\mathbbm{1}}{|\gamma|\mathbbm{1}}\binom{\beta+|\gamma|\mathbbm{1}}{|\gamma|\mathbbm{1}}C^{|\xi-\xi_0|+2|\delta|+|\gamma|+1}
\end{align*}
Thus Lemma \ref{bmLemmaAnalytic} follows. Next we use this lemma to prove Theorem \ref{MainLemma}.
\end{proof}

\subsection{Proof of Theorem \ref{MainLemma}}
For convenience we define
\begin{equation}\label{am}
a_{m,\xi}=\frac{b_{m,\xi}}{(2m+1)!\xi!}.
\end{equation}
Then by Lemma \ref{bmLemmaAnalytic}
\begin{align}\label{amrecursive}
\begin{split}
	a_{m,\xi}
	\leq \sum_{l=1}^m
	\sum_{|\delta|=l}
	\sum_{\substack{\alpha,\beta\leq \delta \\ |\gamma| \leq |\alpha+\beta| \\ \xi_0\leq \xi }}
	a_{m-l,\gamma+\xi_0}\binom{\gamma+\xi_0}{\xi_0}
	\frac{1}{\binom{2m+1}{2l}}\frac{\delta!}{(2l)!}
	\binom{\alpha+|\gamma|\mathbbm{1}}{|\gamma|\mathbbm{1}}\binom{\beta+|\gamma|\mathbbm{1}}{|\gamma|\mathbbm{1}}C^{|\xi-\xi_0|+2|\delta|+|\gamma|+1}.
\end{split}
\end{align}
Since $b_0(x,z)=1$, we have
\begin{equation}\label{a0}
a_{0,\xi}=
\begin{cases}
1 & \xi=(0,0,\cdots,0),\\
0 & \mbox{otherwise}.
\end{cases}
\end{equation}
We will argue by induction on $m$ and prove that for any integer $m\geq 0$ and multi-index $\xi\geq 0$,
\begin{equation}\label{amupperbound}
a_{m,\xi}\leq\binom{2m+|\xi|}{|\xi|} A^{m}(2C)^{|\xi|},
\end{equation}
where $C$ is the same constant which appears on the right hand side of \eqref{amrecursive} and $A$ is a bigger constant to be selected later. Obviously \eqref{a0} and the fact that  $C \geq 1$ imply that \eqref{amupperbound} holds for $m=0$ and any $\xi\geq 0$. Assume that \eqref{amupperbound} holds up to $m-1$ and we proceed to $m$. By \eqref{amrecursive} and because $\frac{\delta!}{(2l)!}\leq 1$, we have
\begin{align*}
	a_{m,\xi}
	\leq&\sum_{l=1}^m
	\sum_{|\delta|=l}
	\sum_{\alpha,\beta\leq \delta}
	\sum_{ |\gamma| \leq |\alpha+\beta|}
	\sum_{\xi_0\leq \xi}
	A^{m-l}\binom{2m-2l+|\gamma+\xi_0|}{|\gamma+\xi_0|}
	\binom{\gamma+\xi_0}{\xi_0}
	\frac{1}{\binom{2m+1}{2l}}
	\\&
	\cdot \binom{\alpha+|\gamma| \mathbbm{1}}{|\gamma|\mathbbm{1}}\binom{\beta+|\gamma|\mathbbm{1}}{|\gamma|\mathbbm{1}}2^{|\gamma+\xi_0|}C^{|\xi|+2|\gamma|+2|\delta|+1}
	\\
	\leq&\sum_{l=1}^m
	\sum_{|\delta|=l}
	\sum_{ |\gamma| \leq 2|\delta|}
	\sum_{\xi_0\leq \xi}
	A^{m-l}\binom{2m-2l+|\gamma+\xi_0|}{|\gamma+\xi_0|}
	\binom{\gamma+\xi_0}{\xi_0}
	\frac{1}{\binom{2m+1}{2l}}
	\\&
	 \cdot \sum_{\alpha\leq \delta}\binom{\alpha+|\gamma|\mathbbm{1}}{|\gamma|\mathbbm{1}}
	\sum_{\beta\leq \delta}\binom{\beta+|\gamma| \mathbbm{1}}{|\gamma|\mathbbm{1}}2^{|\gamma+\xi_0|}C^{|\xi|+7l}.
\end{align*}
Due to the fact
\begin{equation*}
\sum_{\alpha\leq \delta}\binom{\alpha+|\gamma|\mathbbm{1}}{|\gamma|\mathbbm{1}}
=\sum_{\beta\leq \delta}\binom{\beta+|\gamma|\mathbbm{1}}{|\gamma|\mathbbm{1}}
=\binom{\delta+(|\gamma|+1)\mathbbm{1}}{(|\gamma|+1)\mathbbm{1}}\leq 2^{|\delta|+n|\gamma|+n},
\end{equation*}
it follows that
\begin{align*}
	a_{m,\xi}
	\leq&\sum_{l=1}^m
	\sum_{|\delta|=l}
	\sum_{|\gamma| \leq 2|\delta|}
	\sum_{\xi_0\leq \xi}
	A^{m-l}\binom{2m-2l+|\gamma+\xi_0|}{|\gamma+\xi_0|}
	\binom{\gamma+\xi_0}{\xi_0}
	\frac{1}{\binom{2m+1}{2l}}
	2^{|\gamma+\xi_0|+2|\delta|+2n|\gamma|+2n}C^{|\xi|+7l}
	\\
	\leq&A^m(2C)^{|\xi|}
	\sum_{l=1}^m
	\sum_{|\delta|=l}
	\sum_{ |\gamma| \leq 2l}
	\sum_{\xi_0\leq \xi}
	2^{|\xi_0|-|\xi|}\left(\frac{2^{6n+4}C^7}{A}\right)^{l}
	\binom{2m-2l+|\gamma+\xi_0|}{|\gamma+\xi_0|}
	\binom{\gamma+\xi_0}{\xi_0}
	\frac{1}{\binom{2m+1}{2l}}.
\end{align*}
Moreover, since
\begin{align*}
\#\{|\delta|=l\}=\binom{l+n-1}{n-1}\leq 2^{l+n-1}\leq 2^{nl},
\end{align*}
we have
\begin{align}\label{am2}
	a_{m,\xi}
	\leq A^m(2C)^{|\xi|}
	\sum_{l=1}^m
	\sum_{ |\gamma| \leq 2l}
	\sum_{\xi_0\leq \xi}
	2^{|\xi_0|-|\xi|}(\frac{2^{7n+4}C^7}{A})^{l}
	\binom{2m-2l+|\gamma+\xi_0|}{|\gamma+\xi_0|}
	\binom{\gamma+\xi_0}{\xi_0}
	\frac{1}{\binom{2m+1}{2l}}.
\end{align}
In the next step we apply the combinatorial inequality
\begin{equation*}
    \binom{\gamma+\xi_0}{\xi_0}\leq \binom{|\gamma+\xi_0|}{|\xi_0|},
\end{equation*}
and the combinatorial identity
\begin{align*}
\binom{2m-2l+|\gamma+\xi_0|}{|\gamma+\xi_0|}\binom{|\gamma+\xi_0|}{|\xi_0|}
=\binom{2m-2l+|\gamma+\xi_0|}{|\xi_0|}\binom{2m-2l+|\gamma|}{2m-2l}.
\end{align*}
Observe that, since $|\gamma|\leq 2l$ and $\xi_0\leq \xi$, we have
\begin{align*}
\binom{2m-2l+|\gamma+\xi_0|}{|\xi_0|}
\leq \binom{2m+|\xi|}{|\xi|}.
\end{align*}
Plugging these into \eqref{am2}, we obtain
\begin{align*}
	a_{m,\xi}
	\leq A^m(2C)^{|\xi|}\binom{2m+|\xi|}{|\xi|}
	\sum_{l=1}^m
	\sum_{ |\gamma| \leq 2l}
	\sum_{\xi_0\leq \xi}
	2^{|\xi_0|-|\xi|}(\frac{2^{7n+4}C^7}{A})^{l}
	\binom{2m-2l+|\gamma|}{2m-2l}
	\frac{1}{\binom{2m+1}{2l}}.
\end{align*}
Again since
\begin{equation*}
    \#\{|\gamma|=k\}=\binom{k+n-1}{n-1}\leq 2^{k+n-1},
\end{equation*}
the sum over $\gamma$ on the right hand side can be estimated as
\begin{align*}
    \sum_{ |\gamma|\leq 2l} \binom{2m-2l+|\gamma|}{2m-2l}
    =\sum_{k=0}^{2l}\sum_{|\gamma|=k}\binom{2m-2l+k}{2m-2l}
    \leq  2^{2l+n-1}\binom{2m+1}{2m-2l+1}.
\end{align*}
Therefore,
\begin{align*}
	a_{m,\xi}
	\leq& A^m(2C)^{|\xi|}\binom{2m+|\xi|}{|\xi|}
	\sum_{l=1}^m
    \left (\frac{2^{8n+5}C^7}{A} \right )^{l}
	\sum_{\xi_0\leq \xi}
	2^{|\xi_0|-|\xi|}
	\\
	\leq& A^m(2C)^{|\xi|}\binom{2m+|\xi|}{|\xi|}
	\sum_{l=1}^m
    2^n \left (\frac{2^{8n+5}C^7}{A}\right )^{l}.
\end{align*}
By taking  $A= 2^{9n+6}C^7$ we surely have $\sum_{l=1}^m2^n \left (\frac{2^{8n+5}C^7}{A} \right )^{l} < 1$, which implies that $a_{m,\xi}\leq A^m(2C)^\xi\binom{2m+|\xi|}{|\xi|}$, hence concluding the induction. Remembering the definition \eqref{am} of $a_m$ in terms of $b_m$, we get
\begin{equation}\label{bmUB}
\|D_z^\xi b_m(x,z)\|_{L^\infty(U\times U)}= (2m+1)!\xi! a_{m,\xi}\leq (64A)^{m+|\xi|}(m!)^2\xi!,
\end{equation}
So Theorem \ref{MainLemma} follows by renaming $64A$ to $C$ and taking $\xi=0$.

\section{Optimality of the upper bounds on Bergman coefficients $b_m$} \label{optimal}In this section we show that although it would be desirable to improve the estimate \eqref{bmUB} to
\begin{equation}\label{bmconjecture}
\|D_z^\xi b_m(x,z)\|_{L^\infty(U\times U)}\leq C^{m+|\xi|} m!\xi!,
\end{equation}
 it is not possible to prove it simply by the recursive inequality \eqref{bmAnalytic}. Here we provide an example which satisfies \eqref{bmAnalytic} while fails \eqref{bmconjecture}. For simplicity, we assume $C=1$ in \eqref{bmAnalytic}. Let us consider the worst case when equality holds in \eqref{bmAnalytic}, i.e.
\begin{equation}\label{bmoptimal}
	b_{m,\xi}
	=\sum_{l=1}^m
	\sum_{|\delta|=l}\delta!
	\sum_{\alpha,\beta\leq \delta}
	\sum_{ |\gamma| \leq |\alpha+\beta|}
	\sum_{\xi_0\leq \xi}
	\frac{b_{m-l,\gamma+\xi_0}}{\gamma!}\frac{\xi!}{\xi_0!}
	\binom{\alpha+|\gamma|\mathbbm{1}}{|\gamma|\mathbbm{1}}\binom{\beta+|\gamma|\mathbbm{1}}{|\gamma|\mathbbm{1}}.
\end{equation}
One can easily check that this recursive equation uniquely defines $\{b_{m, \xi} \}$ given an initial data $\{ b_{0, \xi} \}$. We shall only focus on the terms $b_{m,ke_1}$ where $e_1=(1,0,\cdots,0)$ and show by induction that
\begin{equation}\label{bmLB}
b_{m,ke_1}\geq (2m-2+k)! \hspace{12pt} \mbox{ for any }m\geq 1,k\geq 0.
\end{equation}
First let us check this for $b_{1,ke_1}$. Since in our case
\begin{equation*}
b_{0,\xi}=
\begin{cases}
1 & \xi=0\\
0 & \mbox{otherwise},
\end{cases}
\end{equation*}
by \eqref{bmoptimal}, we have
\begin{equation*}
	b_{1,\xi}
	=\sum_{|\delta|=1}
	\sum_{\alpha,\beta\leq \delta}\xi!\geq \xi!.
\end{equation*}
Therefore \eqref{bmLB} holds for $b_{1,ke_1}$.
Assume that \eqref{bmLB} holds for $b_{1,ke_1},b_{2,ke_1},\cdots b_{m-1,ke_1}$. Then by only considering the terms with $l=|\alpha|=|\beta|=1$ and $\gamma=2e_1$ in \eqref{bmoptimal}, we obtain for $m\geq 2$ 
\begin{align*}
	b_{m,ke_1}
	\geq&
	\sum_{|\delta|=1}
	\sum_{|\alpha|=|\beta|=1}
	\sum_{j=0}^k
	\frac{b_{m-1,(j+2)e_1}}{2!}\frac{k!}{j!}
	\binom{\alpha+2 \cdot \mathbbm{1}}{2 \cdot \mathbbm{1}}\binom{\beta+2 \cdot \mathbbm{1}}{2 \cdot \mathbbm{1}}
	\\
	\geq&
	\sum_{j=0}^k
	(2m-2+j)!\frac{k!}{j!}
	\\
	\geq&
	(2m-2+k)!.
    \end{align*}
Note that if in particular we put $k=0$ into \eqref{bmLB} we get 
$$ b_{m,0} \geq  \left ( \frac14 \right)^m  m!^2, $$ 
which shows that up to an exponential factor $C^m$, $m!^2$ is the best upper bound one can hope to obtain from the recursive inequality \eqref{bmAnalytic}. 

\section{\k Manifolds with local constant holomorphic sectional curvatures}\label{Sec ConstantCurvature}

In this section, we consider \k manifolds with local constant holomorphic sectional curvature and prove Theorem \ref{Constant1} and Corollary \ref{Constant2}. If the \k manifold has constant holomorphic sectional curvatures only near a point $p$, then we have the following properties on $b_m$ near $p$. 
\begin{prop}\label{local constant recursive formula}
	Assume the \k manifold has constant holomorphic sectional curvature $c$ in some neighborhood $V$ containing $p$. Then there exists $U \subset V$ containing $p$  such that for any $x,y\in U$, the Bergman kernel coefficients $b_m(x,\bar{y})$ are all constants that vanish for $m > n$, and are given by the polynomial relation
\begin{align*}
		\sum_{j=0}^n{b_j}k^{n-j}=  \frac{c^n \Gamma (\frac{k}{c}+n+1)}{\Gamma(\frac{k}{c}+1)} = c^n \left (\frac{k}{c}+n \right ) \left (\frac{k}{c}+n-1 \right ) \dots \left (\frac{k}{c}+1 \right ). 
		\end{align*}
In the case $c=0$, the right hand side is understood as the limit when $c \to 0$, which equals $k^n$. 

\end{prop}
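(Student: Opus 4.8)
The plan is to reduce the statement to an explicit computation on the complex space forms.

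\textbf{Step 1: reduction to the model.} The first point is that $b_m(x,\bar x)$ is a \emph{local} invariant of the \k metric: by the recursive formula \eqref{Recursive} together with $b_0\equiv 1$, the function $b_m(x,z)$ for $z$ near $\bar x$ is a universal polynomial expression in finitely many Taylor coefficients of $\phi$ at $x$ (everything entering \eqref{Recursive} — $\theta(x,y,z)$, $z(x,y,\theta)$, $\Delta_0(x,y,\theta)$ — is a real–analytic function built out of $\psi$, evaluated and differentiated at $y=x$). Equivalently, $b_m(x,\bar x)$ depends only on the germ of the metric at $x$ and is invariant under local biholomorphic \k isometries and under pluriharmonic shifts of $\phi$ — this is also visible from the fact that it is a coefficient in the asymptotic expansion of the intrinsic Bergman function $|K_k(x,x)|_{h^k}$, which admits a local expansion by Proposition \ref{LocalGlobal}. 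On the other hand, it is classical that a \k metric with constant holomorphic sectional curvature $c$ on $V$ is, near each of its points, locally holomorphically isometric to the complex space form $\mathcal M_c$ (a rescaled $\mathbb{CP}^n$ for $c>0$, flat $\mathbb C^n$ for $c=0$, a rescaled ball for $c<0$); in suitable holomorphic coordinates centered at the point, $\phi$ becomes, up to an additive pluriharmonic term, the model potential $\phi_c(z)=\tfrac1c\log(1+c|z|^2)$ for $c\neq 0$, and $\phi_0(z)=|z|^2$. Hence there is an open $U\subset V$ containing $p$ such that for every $x\in U$ the germ of $\phi$ at $x$ agrees (after a holomorphic change of coordinates and a pluriharmonic shift) with that of $\phi_c$, so $b_m(x,\bar x)$ equals the coefficient $b_m^{(c)}$ computed from $\phi_c$. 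By homogeneity of $\mathcal M_c$ this $b_m^{(c)}$ is a constant $b_m$, and since $b_m(x,z)$ is the polarization of $x\mapsto b_m(x,\bar x)\equiv b_m$, we get $b_m(x,z)\equiv b_m$ on $U\times U$.

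\textbf{Step 2: the model Bergman kernel.} It remains to compute the $b_m$ from $\phi_c$. For $c=0$ this is the Bargmann–Fock case: $K_k(x,y)=\frac{k^n}{\pi^n}e^{k\,x\cdot\bar y}$, so $B(x,\bar y)\equiv 1$, i.e. $b_0=1$ and $b_m=0$ for $m\geq 1$, and $\sum_{j=0}^n b_jk^{n-j}=k^n$, which is the $c\to 0$ limit of the claimed formula. For $c\neq 0$ one computes the weighted Bergman kernel of $\phi_c$ directly: the volume form is $(1+c|z|^2)^{-(n+1)}\det(\cdot)\,dV_0$-type with Euclidean $dV_0$ (the integral being over $\mathbb C^n$ if $c>0$ and over the ball $\{|z|^2<1/|c|\}$ if $c<0$), the monomials $z^\alpha$ are mutually orthogonal, and a Beta-integral evaluation of $\int |z^\alpha|^2(1+c|z|^2)^{-k/c-n-1}\,dV_0$ gives
$$\|z^\alpha\|_{k\phi_c}^2=\frac{\pi^n\,\alpha!}{c^{\,n}}\;\frac{\Gamma\!\bigl(\tfrac kc+1-|\alpha|\bigr)}{\Gamma\!\bigl(\tfrac kc+n+1\bigr)},$$
(for $c>0$ only the $z^\alpha$ with $|\alpha|<k/c+1$ lie in the space, exactly where this is finite and positive). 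Summing $K_k(x,y)=\sum_\alpha z^\alpha\bar y^\alpha/\|z^\alpha\|^2$ with $\sum_{|\alpha|=m}z^\alpha\bar y^\alpha/\alpha!=(x\cdot\bar y)^m/m!$, $\tfrac1{m!\,\Gamma(k/c+1-m)}=\binom{k/c}{m}/\Gamma(k/c+1)$, and the binomial series $\sum_m\binom{k/c}{m}t^m=(1+t)^{k/c}$ yields
$$K_k(x,y)=e^{k\psi(x,\bar y)}\,\frac{c^n}{\pi^n}\,\frac{\Gamma(\tfrac kc+n+1)}{\Gamma(\tfrac kc+1)},\qquad e^{k\psi(x,\bar y)}=(1+x\cdot\bar y)^{k/c}.$$
Hence $1+\sum_{m\geq 1}b_mk^{-m}=B(x,\bar y)=\frac{c^n}{k^n}\frac{\Gamma(k/c+n+1)}{\Gamma(k/c+1)}=\frac1{k^n}\prod_{i=1}^n(k+ic)$, a monic polynomial of degree $n$ in $k$ divided by $k^n$. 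Comparing coefficients gives $b_0=1$, $b_m=0$ for $m>n$, and $\sum_{j=0}^n b_jk^{n-j}=\prod_{i=1}^n(k+ic)=c^n\,\Gamma(k/c+n+1)/\Gamma(k/c+1)$, as claimed.

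\textbf{On the case $c<0$ and the main obstacle.} One can sidestep the sign-of-$c$ case analysis (and the bookkeeping of which monomials belong to the Bergman space when $c>0$ and $k/c\notin\mathbb Z$) by first establishing the identity for $c>0$ via the honest $\mathbb{CP}^n$ Bergman kernel, and then noting that $b_m^{(c)}$ is a polynomial in $c$ — every Taylor coefficient of $\phi_c$ is a polynomial in $c$, and $b_m^{(c)}$ is a universal polynomial in these by \eqref{Recursive} — so the equality of the two polynomials in $(c,k)$ on $c>0$ forces it for all $c$, including $c\leq 0$. A direct attack via the recursive formula \eqref{Recursive} applied to $\phi_c$ is also possible but the expressions for $\theta(x,y,z)$, $z(x,y,\theta)$, $\Delta_0$ become unwieldy, so the route above is preferable. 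The only genuinely delicate point is Step 1: fixing the curvature normalization so that $\phi_c=\tfrac1c\log(1+c|z|^2)$ is precisely the potential of the space form of constant holomorphic sectional curvature $c$ (a direct but convention-dependent check), together with invoking the classical local holomorphic rigidity of constant-holomorphic-sectional-curvature \k metrics and the locality and biholomorphic invariance of the coefficients $b_m(x,\bar x)$.
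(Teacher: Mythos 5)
Your proposal is correct in substance but follows a genuinely different route from the paper. The paper stays entirely inside the BBS machinery: working in Bochner coordinates with the normal form $\phi_c=\tfrac1c\log(1+c|z|^2)$, it computes $\theta$, $z(x,y,\theta)$ and $\Delta_0$ explicitly, derives from \eqref{Recursive} a closed recursion $b_m=-\sum_{l=1}^m(-c)^l\tfrac{(l+n-1)!}{(n-1)!}a_lb_{m-l}$ (Lemma \ref{recursiveCHSC}), which simultaneously proves that the $b_m$ are constants and, via the substitution $\widetilde b_m=b_m/c^m$, reduces all $c\neq 0$ to the single case $c=1$, where the $\mathbb{CP}^n$ answer is quoted from \cite{Lu}. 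You instead get constancy from locality/invariance of $b_m(x,\bar x)$ plus local rigidity of space forms, and then compute the model weighted Bergman kernel exactly by orthogonal monomials and Beta integrals. Both routes bottom out in the same exact $\mathbb{CP}^n$-type computation; yours is more classical and self-explanatory, while the paper's recursion avoids the one genuinely delicate point in your Step 2, namely identifying the exact kernel of the noncompact model with the formal BBS coefficients when $c>0$ and $k/c\notin\mathbb{Z}$ (there the monomial sum truncates at $|\alpha|<k/c+1$ and is \emph{not} exactly $(1+cx\cdot\bar y)^{k/c}$). Your polynomiality-in-$c$ fix does close this, but you should say against which infinite set of $c$ you verify the polynomial identity — e.g.\ $c=1/d$, $d\in\mathbb N$, where the model is honestly $(\mathbb{CP}^n,O(d))$ and uniqueness of the Zelditch--Catlin expansion applies; even cleaner is the scaling identity $b_m^{(c)}=c^m b_m^{(1)}$ (from $\phi_c(z)=\tfrac1c\phi_1(\sqrt c\,z)$), which is exactly the paper's $\widetilde b_m$ reduction and needs only the single case $c=1$. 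Two computational slips worth fixing: the norm should read $\|z^\alpha\|^2=\frac{\pi^n\alpha!}{c^{|\alpha|+n}}\frac{\Gamma(k/c+1-|\alpha|)}{\Gamma(k/c+n+1)}$ (you dropped $c^{|\alpha|}$, which is needed to make the binomial variable $t=c\,x\cdot\bar y$), and $e^{k\psi(x,\bar y)}=(1+c\,x\cdot\bar y)^{k/c}$, not $(1+x\cdot\bar y)^{k/c}$; neither affects your final formula, which is correct.
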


To prove this proposition we first prove a lemma that gives a recursive formula for the constants $b_m$. 
\begin{lemma} \label{recursiveCHSC}For all $c$, $b_0=1$ and for $m\geq 1$ we have
	\begin{equation}\label{constant recursive}
	b_m=-\sum_{l=1}^m(-c)^l\frac{(l+n-1)!}{(n-1)!}a_lb_{m-l},
	\end{equation}
	where $\{a_l\}_{l=0}^\infty$ are given by the Taylor expansion
	\begin{equation}\label{definition of a_l}
	e^x\left(\frac{e^x-1}{x}\right)^{n-1}=\sum_{l=0}^\infty a_lx^l.
	\end{equation}
\end{lemma}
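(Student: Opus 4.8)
The plan is to reduce to an explicit model and then read the recursion off a generating function hidden in $\Delta_0$. First I would reduce to the model potential: since $b_m(x,z)$ is by construction the polarization of the intrinsic Zelditch--Catlin coefficient $b_m(x,\bar x)$, and since any real-analytic \k metric of constant holomorphic sectional curvature $c$ near $p$ is locally holomorphically isometric to the model space form, it suffices to prove the lemma when $\phi(x)=\frac1c\log(1+c|x|^2)$ (the case $c=0$ being $\phi(x)=|x|^2$, handled as the limit $c\to 0$ or directly). For this $\phi$, $\psi(x,z)=\frac1c\log(1+c\,x\cdot z)$ with $x\cdot z:=\sum_i x_iz_i$. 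The model is homogeneous --- its local holomorphic isometry group acts transitively --- and $b_m(x,\bar x)$ is a local invariant of the metric, so $b_m(x,\bar x)$ is constant; by uniqueness of polarization $b_m(x,z)\equiv b_m:=b_m(0,0)$, and $b_0\equiv 1$.

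Since every $b_{m-l}$ is constant, the recursion \eqref{Recursive} collapses to
\[
b_m=-\sum_{l=1}^m\frac{b_{m-l}}{l!}\,\big[(D_y\cdot D_\theta)^l\,\Delta_0(x,y,\theta)\big]\big|_{y=x},
\]
an identity valid for all $x,\theta$, which I would evaluate at $x=0$ (where $z(0,0,\theta)=\theta$). The crux is then an explicit formula for $\Delta_0(0,y,\theta)$. From
\[
\theta(0,y,z)=\int_0^1\psi_x\big((1-t)y,z\big)\,dt=\frac{\log(1+c\,y\cdot z)}{c\,y\cdot z}\,z
\]
and $\Delta_0=\det\psi_{yz}(y,z)/\det\theta_z(0,y,z)$, the matrix determinant lemma gives $\det\psi_{yz}=(1+c\,y\cdot z)^{-(n+1)}$ and, writing $g(s)=\frac{\log(1+cs)}{cs}$ so that $g(s)+sg'(s)=\frac1{1+cs}$, $\det\theta_z(0,y,z)=g(y\cdot z)^{n-1}(1+c\,y\cdot z)^{-1}$. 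Eliminating $z$ via $y\cdot\theta=\frac1c\log(1+c\,y\cdot z)$ yields
\[
\Delta_0(0,y,\theta)=e^{-nc\,y\cdot\theta}\Big(\frac{e^{c\,y\cdot\theta}-1}{c\,y\cdot\theta}\Big)^{n-1}=e^{w}\Big(\frac{e^{w}-1}{w}\Big)^{n-1}\Big|_{w=-c\,y\cdot\theta}=\sum_{l\ge 0}a_l(-c)^l(y\cdot\theta)^l,
\]
with $a_l$ exactly as in \eqref{definition of a_l}. Applying $\frac1{l!}(D_y\cdot D_\theta)^l(\cdot)|_{y=0}$ kills all but the degree-$l$ term, and expanding $(y\cdot\theta)^l$ multinomially gives $(D_y\cdot D_\theta)^l(y\cdot\theta)^l|_{y=0}=(l!)^2\binom{l+n-1}{n-1}$, hence $\frac1{l!}[(D_y\cdot D_\theta)^l\Delta_0(0,y,\theta)]|_{y=0}=(-c)^l\,l!\binom{l+n-1}{n-1}a_l=(-c)^l\frac{(l+n-1)!}{(n-1)!}a_l$. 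Substituting this into the collapsed recursion is precisely \eqref{constant recursive}; when $c=0$ one has instead $\theta=z$ and $\Delta_0\equiv1$, forcing $b_m=0$ for $m\ge1$, consistent with the formula since $(-c)^l=0$ there.

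The main obstacle is the explicit evaluation of $\Delta_0$: carrying out the two determinants and, above all, recognizing that the resulting closed form is exactly the generating series $e^w\big(\frac{e^w-1}{w}\big)^{n-1}$ of the $a_l$ with $w=-c\,y\cdot\theta$ --- this identification is what produces the coefficients $a_l$ appearing in \eqref{constant recursive}. A secondary point requiring care is the constancy of the $b_m$, which is what lets us pull $b_{m-l}$ outside the operators $(D_y\cdot D_\theta)^l$ and evaluate the recursion at $x=0$; I would argue it from the homogeneity of the model space form together with the coordinate-independence of the diagonal coefficients $b_m(x,\bar x)$. One should also keep track of Sj\"ostrand's sign conventions for $D$, but this is routine bookkeeping.
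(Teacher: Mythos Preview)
Your proof is correct and follows essentially the same approach as the paper: both reduce to the model potential $\phi=\frac1c\log(1+c|x|^2)$, compute $\Delta_0$ explicitly via the matrix determinant lemma, identify it with the generating series $e^w\big(\tfrac{e^w-1}{w}\big)^{n-1}$, and evaluate $\frac{1}{l!}(D_y\cdot D_\theta)^l\Delta_0|_{y=x}$ by multinomial expansion. The only minor difference is that you argue constancy of the $b_m$ a priori from homogeneity and then specialize to $x=0$, whereas the paper keeps $x$ general and proves constancy inductively from the recursion itself; both are fine, and your specialization slightly shortens the determinant computations.
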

\begin{proof}
	When the \k manifold has constant holomorphic sectional curvature $c$ in $V$ containing $p$, it is known that (see for example equation (28) in \cite{Boc}),  in a specific coordinate (Bochner coordinate) at $p$, the \k potential near $p$, say in $U$, can be uniquely written\footnote{Note that we are using a different notation from the one in \cite{Boc}, so that ${\mathbb C \mathbb P}^n$ has holomorphic sectional curvature $1$ instead of $2$. } as 
\begin{equation*}
	\phi(x)=\begin{cases}
	\frac{1}{c}\ln\left(1+c\sum_{i=1}^n|x_i|^2\right) \quad &c\neq 0,
	\\
	\sum_{i=1}^{n}|x_i|^2 \quad &c=0,
	\end{cases}
	\end{equation*}
	where the case $c=0$ can be regarded as the limiting case when $c\rightarrow 0$. We now  simplify the recursive formula \eqref{Recursive} on $b_m$ with the above explicit expression for $\phi$. By polarizing $\phi(x)$, we get $\psi(x,z)=\frac{1}{c}\ln\left(1+cx\cdot z\right)$. Taking partial derivatives, 
	\begin{align*}
	\psi_{x_i}(x,z)=\frac{z_i}{1+cx\cdot z}, & \qquad 
	\psi_{x_iz_j}=\frac{\delta_{ij}}{1+cx\cdot z}-\frac{cz_ix_j}{(1+cx\cdot z)^2}.
	\end{align*}
	The \emph{matrix determinant lemma} tells that if $A$ is an invertible matrix and $u$ and $v$ are column vectors, then
	\begin{align*}
	\det\left(A+uv^\intercal\right)=\left(1+v^\intercal A^{-1}u\right)\det A.
	\end{align*}
	Therefore, 
	\begin{align*}
	\det \psi_{xz}(x,z)=\frac{1}{(1+cx\cdot z)^{n+1}}.
	\end{align*}
	Recalling the definition of $\theta=\theta(x,y,z)$ in \eqref{theta}, we have
	\begin{align*}
	\theta=\frac{\ln(1+cx\cdot z)-\ln(1+cy\cdot z)}{c(x-y)\cdot z}\, z.
	\end{align*}
	Hence,
	\begin{align*}
	\frac{\partial \theta_i}{\partial z_j}(x,y,z)
	=&\frac{\ln(1+cx\cdot z)-\ln(1+cy\cdot z)}{c(x-y)\cdot z}\delta_{ij}+\frac{\frac{cx_j}{1+cx\cdot z}-\frac{cy_j}{1+cy\cdot z}}{c(x-y)\cdot z}z_i
	\\
	&-\frac{\ln(1+cx\cdot z)-\ln(1+cy\cdot z)}{(c(x-y)\cdot z)^2} c(x_j-y_j)z_i.
	\end{align*}
	Using the \emph{matrix determinant lemma} again, we obtain
	\begin{align*}
	\det\theta_z=\frac{1}{(1+cx\cdot z)(1+cy\cdot z)}\left(\frac{\ln(1+cx\cdot z)-\ln(1+cy\cdot z)}{c(x-y)\cdot z}\right)^{n-1}.
	\end{align*}
	Therefore,
	\begin{align*}
	\Delta_0(x,y,z)
	=&\frac{\det\psi_{yz}(y,z)}{\det\theta_z(x,y,z)}\\
	=&\frac{1+cx\cdot z}{1+cy\cdot z}\left(\frac{c(x-y)\cdot z}{1+cy\cdot z}\right)^{n-1} \left(\frac{1}{\ln(1+cx\cdot z)-\ln(1+cy\cdot z)}\right)^{n-1}.
	\end{align*}
	We then use the relation $c\theta\cdot(x-y)=\ln(1+cx\cdot z)-\ln(1+cy\cdot z)$, to change the variables from $(x,y,z)$ to $(x,y,\theta)$ and get
	\begin{align*}
	\Delta_0(x,y,\theta)=e^{c\theta\cdot (x-y)}\left(\frac{e^{c\theta\cdot(x-y)}-1}{c\theta\cdot (x-y)}\right)^{n-1}=\sum_{l=0}^\infty a_l\left(c\theta\cdot (x-y)\right)^l.
	\end{align*}
	Remember that $b_0(x,z)=1$, so if we assume that $b_1(x,z),b_2(x,z)\cdots b_{m-1}(x,z)$ are all constants, then by \eqref{Recursive} we have
	\begin{align*}
	b_m=-\sum_{l=1}^m\frac{(D_y\cdot D_\theta)^l}{l!}\left(b_{m-l}\Delta_0\right) \big|_{y=x}
	=-\sum_{l=1}^mb_{m-l}\frac{(D_y\cdot D_\theta)^l\Delta_0}{l!} \big|_{y=x}.
	\end{align*}
	Now we calculate $\frac{(D_y\cdot D_\theta)^l\Delta_0}{l!} \big|_{y=x}$ as follows. 
	\begin{align*}
	\frac{(D_y\cdot D_\theta)^l\Delta_0}{l!} \big|_{y=x}
	=&\sum_{j=0}^\infty a_j\frac{(D_y\cdot D_\theta)^l}{l!} \left(c\theta\cdot (x-y)\right)^j \big|_{y=x}\\
	=&a_l\frac{(D_y\cdot D_\theta)^l}{l!} \left(c\theta\cdot (x-y)\right)^l \big|_{y=x}\\
	=&\frac{c^la_l}{l!}\sum_{|\xi|=l}\binom{l}{\xi_1,\xi_2,\cdots,\xi_n}D_y^\xi D_\theta^\xi \sum_{|\eta|=l}\binom{l}{\eta_1,\eta_2,\cdots,\eta_n}\theta^\eta(x-y)^\eta \,\, \big|_{y=x}
	\\
	=&\frac{c^la_l}{l!}\sum_{|\xi|=l}\binom{l}{\xi_1,\xi_2,\cdots,\xi_n}^2\xi!^2(-1)^{|\xi|}
	\\
	=&(-c)^la_l\frac{(l+n-1)!}{(n-1)!}.
	\end{align*} 
	Therefore, $b_m=-\sum_{l=1}^m(-c)^l\frac{(l+n-1)!}{(n-1)!}a_lb_{m-l}$ is a constant and the lemma follows.
\end{proof}

\begin{proof}[\textbf{Proof of Proposition \ref{local constant recursive formula}.}]
	
	By Lemma \ref{recursiveCHSC}, it is obvious that if $c=0$ then $b_m=0$ for any $m\geq 1$. When $c\neq 0$, 
	we define $\widetilde{b}_m=\frac{b_m}{c^m}$. Then $\tilde{b}_0=b_0=1$ and $\tilde{b}_m$'s satisfy
	\begin{align*}
	\widetilde{b}_m=-\sum_{l=1}^m(-1)^l\frac{(l+n-1)!}{(n-1)!}a_l\widetilde{b}_{m-l}.
	\end{align*}
	Since $\widetilde{b}_m$ share the same initial data and the same recursive formula as the Bergman coefficients for $c=1$, $\widetilde{b}_m$ are identical to the Bergman coefficients for $c=1$, in particular those of ${\mathbb C \mathbb P}^n$. On the other hand, one can find by direct computation (see for instance, Example 1 in \cite{Lu}) that for ${\mathbb C \mathbb P}^n$, $\widetilde{b}_m=0$ for $m>n$, and $$\sum_{j=0}^n\frac{\widetilde{b}_j}{k^j}=\frac{1}{k^n}\frac{\Gamma(k+n+1)}{\Gamma(k+1)}=\frac{1}{k^n} \left ({k}+n \right ) \left ({k}+n-1 \right ) \dots \left ({k}+1 \right ).$$ 
Then, using this we write
$$\sum_{j=0}^n{b_j}k^{n-j} = \sum_{j=0}^n{c^j \widetilde{b}_j}k^{n-j} = k^n \sum_{j=0}^n{ \widetilde{b}_j}\left(\frac{k}{c}\right)^{-j}=\frac{c^n\Gamma(\frac{k}{c}+n+1)}{\Gamma(\frac{k}{c}+1)}, $$
and the result follows.
\end{proof}

\begin{rmk}
	If the \k manifold has global constant holomorphic sectional curvature $c$, then the fact that the Bergman coefficients are constants and vanish for $m > n$, can be obtained from the Hirzebruch-Riemann-Roch Theorem as we demonstrate below. 
	
	Suppose $M$ has constant holomorphic sectional curvature. Then by Theorem 10 of \cite{Boc}, the curvature $R_{i\bar{j}k\bar{l}}$ can be written in terms of the metric $g_{i\bar{j}}$ as
	\begin{equation*}
	R_{i\bar{j}k\bar{l}}=-c\left(g_{i\bar{j}}g_{k\bar{l}}+g_{i\bar{l}}g_{k\bar{j}}\right).
	\end{equation*} 
	By Theorem 1.1 in \cite{Lu}, each $b_{m}(x,\bar{x})$ is a polynomial of the curvature and its covariant derivatives at $x$. Therefore, each $b_{m}(x,\bar{x})$ is a constant function on $M$. Denote $B_k(x)$ to be the Bergman function on $M$ defined by $B_k(x)=|K_k(x,x)|_{h^k}$. Then 
	$$\dim H^0(M,L^k)=\int_M B_k(x) \frac{\omega^n}{n!}.$$
	By the Kodaira Vanishing theorem and  the Hirzebruch-Riemann-Roch theorem, for sufficiently large $k$,  we have
	\begin{align*}
	\dim H^0(M,L^k)=\chi(M,L^k)=\int_M Ch(L^k)\wedge Td(M),
	\end{align*}
	where $Ch(L^k)$ is the Chern character and $Td(M)$ is the Todd class.
	Note that, since
	\begin{align*} \int_M  Ch(L^k) & \wedge Td(M)\\
	=&\int_M Td(M) +\frac{k}{\pi}\int_M\frac{\omega}{\pi}\wedge Td(M)+\left(\frac{k}{\pi}\right)^2\int_M\frac{\omega^2}{2!}\wedge Td(M)+\cdots +\left(\frac{k}{\pi}\right)^n\int_M\frac{\omega^n}{n!},
	\end{align*}
	the quantity $\dim H^0(M,L^k)$ must be a polynomial in $k$.  On the other hand, by the asymptotic expansion of $K_k(x,x)$ we have
	\begin{align*}
	\int_M B_k(x)\frac{\omega^n}{n!}
	=\left(\frac{k}{\pi}\right)^n\int_M\frac{\omega^n}{n!}\left(b_0(x,\bar{x})+\frac{b_1(x,\bar{x})}{k}+\frac{b_2(x,\bar{x})}{k^2}+\cdots\right).
	\end{align*}
	By comparing the coefficients and knowing that $b_{m}(x,\bar{x})$ are constants,  we conclude that $b_{m}(x,\bar{x})$ vanishes when $m>n$. Note that because $b_m(x,z)$ is the polarization of $b_m(x,\bar{x})$, each $b_m(x,z)$ wherever it is defined, is the same constant as $b_m(x,\bar{x})$. So  $b_m(x,z)=0$ for $m >n$. 
\end{rmk}

Now that in this particular case, we have better estimates on the growth of $b_m(x,z)$, the proofs of Theorem \ref{Constant1} and Corollary \ref{Constant2} follow by using the same argument as in the previous sections. We shall only indicate the differences. 

\subsection{Proof of Theorem \ref{Constant1}}
Since $\|b_m(x,z)\|_{L^\infty(U\times U)}$ is now trivially bounded by $C^mm!$ (in fact it is bounded by a constant $C$) instead of $C^mm!^2$, estimates in Lemma \ref{error term} can be improved to
\begin{align*}
&\|A_{N}\|_{L^\infty (B^{n}(0,1) \times B^{n}(0,1) \times W_1 )}\leq C^{N}N!,\\ 
&\|A^{(N)}\|_{L^\infty (B^{n}(0,1)\times B^{n}(0,1) \times W_1  )}\leq C k^2 + \frac{C^{N}N!}{k^N},\\
&\|D_\theta\cdot A_{N}\|_{L^\infty (B^{n}(0,1) \times B^{n}(0,1) \times W_1 )}\leq C^{N}N!.
\end{align*}
Accordingly, the error term estimates in Proposition \ref{LocalGlobal} and \ref{LocalBergmanKernel} can be both improved to 
\begin{align*}
| \mathcal R_{N+1}(\phi, k)| \leq \frac{C^{N+1}(N+1)!}{k^{N+1}}.
\end{align*}
By choosing $N=[\frac{k}{C}]-1$, we can minimize the error term $\mathcal R_{N+1}(\phi, k)$ to be $e^{-\delta k}O(1)$ and thus obtain the desired results by observing that $[\frac{k}{C}]-1>n$ for sufficiently large $k$.


\section{Transport equations for the amplitude}\label{Sec NewFormula}
In this final section, we present a parametrix for the local Bergman kernel by means of \emph{transport equations}, similar to those for the wave and heat equations, which could be of independent interest. One can also give a proof of Theorem \ref{MainLemma} using this parametrix representation.  

In the following $K_{k}^{(N)}(x,y)=K_{k,y}^{(N)}(x)$ is the local Bergman kernel of order $N$, defined in Proposition \ref{LocalBergmanKernel}. See Section \ref{ReviewBBS} for all other necessary notations. 
\begin{thm}\label{Transport} For all $N \in \mathbb N$, the local Bergman kernel of order $N$ can be expressed as 
$$K_{k}^{(N)}(x,y)= \left(\frac{k}{\pi}\right)^n  e^{k\psi(x,\bar{y})} B^{(N)}(x, \bar y),$$
where $B^{(N)}(x, \bar y)$ is the truncation up to order $k^{-N}$ of the formal expansion
$$B(x, \bar y) = b_0(x, \bar y) + \frac{b_1(x, \bar y)}{k} +  \frac{b_2(x, \bar y)}{k^2} + \dots,$$
given by 
$$B(x,\bar y)=\frac{1+k(x-y)\cdot A(x,y,\theta(x,y,\bar y))+(D_\theta \cdot A)(x,y,\theta(x,y, \bar y))}{\Delta_0(x,y,\theta(x,y, \bar y))},$$
where $$A(x, y, \theta)= \frac{A_1(x, y,\theta)}{k} + \frac{A_2(x, y,\theta)}{k^2} + \dots ,$$ is the formal expansion whose coefficients satisfy the following transport equations
\begin{equation*}
A_1(x,y,\theta)=-\int_0^1 (D_y\Delta_0)(x,tx+(1-t)y,\theta)dt,
\end{equation*}
and for $m \geq 2$
\begin{align}\label{NewRecursive}
\small \begin{split}
A_m(x,y,\theta)  = & -\int_0^1D_y\Big(\Delta_0(x,y,\theta)(D_\theta \cdot A_{m-1})(x,x,\theta(x,x,z(x,y,\theta))) \Big) (x,tx+(1-t)y,\theta)dt\\
& + \int_0^1D_y\Big((D_\theta \cdot A_{m-1})(x,y,\theta)\Big)(x,tx+(1-t)y,\theta)dt. 
\end{split}
\end{align} 
\end{thm}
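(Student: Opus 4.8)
The plan is to verify that the vector fields $A_m$ defined by the stated formula for $A_1$ and the recursion \eqref{NewRecursive} produce, through $a:=\nabla A=D_\theta\cdot A+k(x-y)\cdot A$ (cf.\ \eqref{nabla}), exactly the Berman--Berndtsson--Sj\"ostrand amplitude whose truncation defines $K_k^{(N)}$ in Proposition~\ref{LocalBergmanKernel}. Granting this, the first displayed identity of the theorem is \eqref{KandB} reduced by \eqref{aB} (taking complex conjugates and using $\oo{\psi(x,z)}=\psi(\bar z,\bar x)$ together with the corresponding reality symmetry of the $b_m$), while the formula for $B$ follows by rewriting \eqref{aB} as $B(x,z(x,y,\theta))=(1+a(x,y,\theta))/\Delta_0(x,y,\theta)$, inserting $a=D_\theta\cdot A+k(x-y)\cdot A$, and specializing to $\theta=\theta(x,y,\bar y)$, for which $z(x,y,\theta(x,y,\bar y))=\bar y$.

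The core task is therefore to show that the $A_m$ satisfy the relations \eqref{Dtheta}: $a_m=D_\theta\cdot A_m+(x-y)\cdot A_{m+1}$ with $a_m=b_m(x,z(x,y,\theta))\Delta_0$ for $m\ge1$ and $a_0=\Delta_0-1$, where the $b_m$ are the genuine (holomorphically extended) Bergman coefficients. Two elementary facts are used throughout. First, $\Delta_0(x,x,\theta)=1$: since $\theta(x,x,z)=\psi_x(x,z)$, the map $z\mapsto\theta(x,x,z)$ is inverted by $\theta\mapsto z(x,x,\theta)$, so at $y=x$ the matrix $\partial_\theta\psi_y$ is the derivative of the identity map $\theta\mapsto\psi_x(x,z(x,x,\theta))=\theta$, whence $\Delta_0(x,x,\theta)=\det I=1$. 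Second, the ``fundamental theorem of calculus'' identity: if $H(x,y,\theta)$ is holomorphic near the origin with $H(x,x,\theta)\equiv0$, then $V:=-\int_0^1(D_yH)(x,tx+(1-t)y,\theta)\,dt$ satisfies $(x-y)\cdot V(x,y,\theta)=H(x,y,\theta)$, because $(x-y)\cdot V=-\int_0^1\tfrac{d}{dt}H(x,tx+(1-t)y,\theta)\,dt=H(x,y,\theta)-H(x,x,\theta)$.

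With these, one argues by induction on $m$. Put $A_0:=0$, and for $m\ge1$ set $c_m(x,w):=(D_\theta\cdot A_m)(x,x,\theta(x,x,w))$, holomorphic in $(x,w)$ near the origin since $w\mapsto\theta(x,x,w)$ is biholomorphic there, and $c_0:=1$. Formula \eqref{NewRecursive} has the shape $A_m=-\int_0^1(D_yH_{m-1})(x,tx+(1-t)y,\theta)\,dt$ with $H_{m-1}=\Delta_0\,(D_\theta\cdot A_{m-1})(x,x,\theta(x,x,z(x,y,\theta)))-(D_\theta\cdot A_{m-1})(x,y,\theta)$; using $\Delta_0(x,x,\theta)=1$ and the inversion $z(x,x,\theta)\leftrightarrow\theta$ one checks $H_{m-1}(x,x,\theta)\equiv0$, so the identity above gives $(x-y)\cdot A_m=H_{m-1}=c_{m-1}(x,z(x,y,\theta))\Delta_0-(D_\theta\cdot A_{m-1})$, with the case $m=1$ (taking $H_0=\Delta_0-1$) giving $(x-y)\cdot A_1=\Delta_0-1$. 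Setting $\tilde a_0:=\Delta_0-1$ and $\tilde a_m:=c_m(x,z(x,y,\theta))\Delta_0$ for $m\ge1$, this says $\tilde a_m=D_\theta\cdot A_m+(x-y)\cdot A_{m+1}$ for every $m\ge0$, i.e.\ $\tilde a:=\sum_m\tilde a_m/k^m=\nabla A$. By the description of negligible amplitudes in Section~\ref{ReviewBBS}, $\tilde a$ is then negligible, so $S\tilde a|_{x=y}\sim0$; on the other hand $1+\tilde a=\bigl(\sum_m c_m/k^m\bigr)\Delta_0$, which is \eqref{aB} for the formal symbol $\tilde B:=\sum_m c_m/k^m$. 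Applying $S(\cdot)|_{x=y}$ to $1+\tilde a=\tilde B\Delta_0$ and using $S(1+\tilde a)|_{x=y}\sim1$ produces precisely the Zelditch--Catlin recursion \eqref{Recursive} for the $c_m$; since that recursion together with $c_0=1$ has a unique solution (Section~\ref{optimal}) and the holomorphically extended $b_m$ satisfy it, we get $c_m=b_m$, hence $\tilde a=a$ and $\tilde B=B$. Then \eqref{Dtheta} holds with the genuine $b_m$, so $B^{(N)}=\sum_{j=0}^{N}b_j/k^j$ and the amplitude $1+a^{(N)}$ (which by \eqref{eq 5.7} depends only on $b_0,\dots,b_N$) coincide with those of Proposition~\ref{LocalBergmanKernel}, and the theorem follows.

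The step deserving the most care is the identification $c_m=b_m$: one must confirm that the pair $(\tilde a,\tilde B)$ produced by \eqref{NewRecursive} satisfies \emph{both} defining properties of the BBS amplitude --- negligibility, which comes for free from $\tilde a=\nabla A$, and the holomorphy relation $1+\tilde a=\tilde B\Delta_0$ --- so that uniqueness of the Zelditch--Catlin recursion pins down the coefficients. The remaining pieces ($\Delta_0(x,x,\theta)=1$, the vanishing $H_{m-1}(x,x,\theta)\equiv0$, the fundamental-theorem-of-calculus identity, and the slight shrinking of neighborhoods needed to make the compositions $\theta(x,x,z(x,y,\theta))$ well defined) are routine.
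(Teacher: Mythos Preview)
Your argument is correct and uses the same ingredients as the paper (the fundamental theorem of calculus for $(x-y)\cdot A_m=H_{m-1}$, the normalization $\Delta_0(x,x,\theta)=1$, and the $y$-independence of $B$), but you run the logic in the opposite direction. The paper starts from the existing BBS data $(a,B)$, writes $a=\nabla A$ for some $A=\tfrac{1}{k}S^{-1}C$, uses the $y$-independence of $B$ to derive the equations $(x-y)\cdot A_m=H_{m-1}$, and then records the integral formulas as particular solutions; it leaves implicit why choosing those particular solutions (rather than the $A$ that came from $S^{-1}C$) still reproduces the same $B$. You instead take the $A_m$ as \emph{defined} by the transport equations, build $\tilde a=\nabla A$ and $\tilde B=\sum c_m/k^m$, and then close the loop by invoking the uniqueness of the recursion \eqref{Recursive} to force $c_m=b_m$. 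This extra uniqueness step is exactly what ties the transport-equation $A$ back to the local Bergman kernel of Proposition~\ref{LocalBergmanKernel}, and it makes your version slightly tighter than the paper's at that point. One small remark: the uniqueness of \eqref{Recursive} is immediate from its structure (it expresses $b_m$ in terms of $b_0,\dots,b_{m-1}$), so you need not point to Section~\ref{optimal} for it.
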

\begin{proof}
Recall that the asymptotic expansion $a(x,y,\theta)$ satisfies
\begin{equation*}
1+a(x,y,\theta(x,y,z))=B(x,z)\Delta_0(x,y,\theta(x,y,z)).
\end{equation*}
Since $a$ is \textit{negligible}, we have $(Sa)(x,x,\theta)=0$ and hence there exists a formal expansion $C(x,y,\theta)$ such that
\begin{equation*}
Sa(x,y,\theta)=(x-y)\cdot C(x,y,\theta).
\end{equation*}
We can solve for $a$ by taking $S^{-1}$ and obtain
\begin{align*}
a=(x-y)\cdot S^{-1}C+\frac{1}{k}D_\theta\cdot S^{-1}C.
\end{align*}
Then we set $A=\frac{1}{k}S^{-1}C$. Obviously,
\begin{equation*}
a(x,y,\theta)=k(x-y)\cdot A(x,y,\theta)+D_\theta\cdot A(x,y,\theta).
\end{equation*}
Thus $B(x,z)$ is related to $A$ by
\begin{equation}\label{BandA}
B(x,z)=\frac{1+k(x-y)\cdot A(x,y,\theta(x,y,z))+(D_\theta \cdot A)(x,y,\theta(x,y,z))}{\Delta_0(x,y,\theta(x,y,z))}.
\end{equation}
Since $B(x,z)$ is independent of the variable $y$, the right hand side is unchanged when we vary $y$. In particular, we can put $y=x$ and get
\begin{equation*}
\frac{1+k(x-y)\cdot A(x,y,\theta(x,y,z))+(D_\theta \cdot A)(x,y,\theta(x,y,z))}{\Delta_0(x,y,\theta(x,y,z))}
=1+(D_\theta \cdot A)(x,x,\theta(x,x,z)).
\end{equation*} 
Plugging the asymptotic expansion $$A\sim A_0+\frac{A_1}{k}+\frac{A_2}{k^2}+\cdots,$$ and comparing the coefficients, we obtain $A_0=0$ and
\begin{align*}
(x-y)\cdot A_1(x,y,\theta)=\Delta_0(x,y,\theta)-1.
\end{align*}
By the fundamental theorem of calculus, this equation has a particular solution given by
\begin{equation*}
A_1(x,y,\theta)=-\int_0^1 (D_y\Delta_0)(x,tx+(1-t)y,\theta)dt.
\end{equation*}
For $m\geq 2$, we get
\begin{align*}
\begin{split}
(x-y)\cdot A_m & (x,y,\theta(x,y,z))= \\ & \Delta_0(x,y,\theta(x,y,z))(D_\theta \cdot A_{m-1})(x,x,\theta(x,x,z))-(D_\theta \cdot A_{m-1})(x,y,\theta(x,y,z)),
\end{split}
\end{align*}
which by changing variables to $(x,y,\theta)$, turns into
\begin{align*}
\begin{split}
\small (x-y)\cdot &A_m(x,y,\theta) =\Delta_0(x,y,\theta)(D_\theta \cdot A_{m-1})(x,x,\theta(x,x,z(x,y,\theta)))-(D_\theta \cdot A_{m-1})(x,y,\theta).
\end{split}
\end{align*}
Similarly, we can see that $A_m$ given by \eqref{NewRecursive} is a particular solution to the above equation. 
\end{proof}

\begin{rmk}\label{FinalRemark} In the recursive formula \eqref{NewRecursive}, each $A_m$ depends on second order derivatives of the previous term $A_{m-1}$. This is similar to the relations between the coefficients in Hadamard's parametrix (see \cite{Ze2}) for the fundamental solution of wave equation. In Hadamard's parametrix, we have 
$$U=U_0+\Gamma U_1+\cdots+\Gamma^m U_m+\cdots,$$
where each $U_m(x,t)$ is a smooth function defined on $W\times \mathbb{R}^+$ and $W$ is some neighborhood on the given Riemannian manifold $(M,g)$. The function
$U_0$ is smooth and non-vanishing. For $m\geq 1$,
\begin{align}\label{Hadamard}
U_m=-\frac{U_0}{4ms^{n+m+1}}\int_0^sU_0^{-1}s^{n+m}\Box U_{m-1}ds,
\end{align}
where $\Box=\frac{\partial^2}{\partial t^2}+\Delta_g$. Hadamard's result says that when $(M,g)$ is real analytic, $U_m$ is dominated by some $C^m$. But the same procedure does not work for $A_m$ because we do not have the $m$ appearing in the denominator of \eqref{Hadamard}, nor the factor $s^{n+m}$ in the integrand, which contributes as $\frac{1}{s^{n+m+1}}\int_0^ss^{n+m}ds=\frac{1}{n+m+1}$. With these differences in mind, it seems that $A_m$ can only be controlled by $C^mm!^2$. We obtain the same bounds for $b_m$ by the relation between $A_m$ and $b_m$ in \eqref{BandA}. 
\end{rmk}

\section*{Acknowledgements}

The authors are thankful to Steve Zelditch for stimulating conversations about the Bergman kernel coefficients.  The third author thanks Prof. Bernard Shiffman for his constant support and mentoring. The third author would also like to thank Prof. Yuan Yuan and Prof. Hao Xu for their friendly discussions and useful suggestions.


\begin{thebibliography}{HHHH}

\bibitem[BeBeSj08]{BBS}
	Berman, R., Berndtsson, B., Sj\"ostrand, J.,
	\emph{A direct approach to Bergman kernel asymptotics for positive line bundles}.
	Ark. Mat. \textbf{46(2)}, 197--217 (2008). 

\bibitem[Be03]{Bern} Berndtsson, B. \emph{Bergman kernels related to Hermitian line bundles over compact complex manifolds}, Explorations in complex and Riemannian geometry,
Contemp. Math., \textbf{332},  1--17, Amer. Math. Soc., Providence, RI, 2003. 

\bibitem[Ber08]{Ber}
Berman, R.,
\emph{Sharp asymptotics for {T}oeplitz determinants and convergence
	towards the {G}aussian free field on {R}iemann surfaces}.
Int. Math. Res. Not. \textbf{22}, 5031--5062 (2012).

\bibitem[Be10]{B}
	Berndtsson, B.
	\emph{An introduction to things $\dbar$. Analytic and Algebraic Geometry}, McNeal, 7--76 (2010).

\bibitem[BlShZe00]{BSZ} Bleher, P., Shiffman, B., Zelditch, S., \emph{Universality and scaling of correlations between zeros on complex manifolds}.
Invent. Math. \textbf{142} (2), 351--395 (2000).

\bibitem[Bo47]{Boc} Bochner, S.
\emph{Curvature in {H}ermitian metric}. Bull. Amer. Math. Soc. 179--195 (1947). 

\bibitem[BoSj75]{BoSj} Boutet de Monvel, L.,  Sj\"ostrand, J.
\emph{Sur la singularit\'e des noyaux de Bergman et de Szeg\"o}. \'Equations aux D\'eriv\'ees Partielles de Rennes, Asterisque \textbf{34-35}, 123--164 (1976), Soc. Math. France, Paris.


\bibitem[Cal53]{Cal} Calabi, E.,
\emph{Isometric imbedding of complex manifolds.}
Ann. of Math. \textbf{58} (2), 1--23 (1953).

\bibitem[Ca99]{Ca}
	Catlin, D.,
	\emph{The Bergman kernel and a theorem of Tian. Analysis and Geometry in Several Complex Variables}, Katata,
	Trends Math., 1--23. Birkh\"auser, Boston (1999).

\bibitem[Ch91]{Ch1} Christ, M. \emph{On the $\bar \partial$ equation in weighted $L^2$ norms in $\mathbb C^1$.} J. Geom. Anal., \textbf{1}(3), 193--230 (1991).

\bibitem[Ch03]{Ch} Christ, M.,
\emph{Slow off-diagonal decay for Szeg\"o kernels associated to smooth Hermitian line bundles.} Harmonic analysis at Mount Holyoke (South Hadley, MA, 2001), 77--89,
Contemp. Math., \textbf{320}, Amer. Math. Soc., Providence, RI, 2003.

\bibitem[Ch13a]{Ch2} Christ, M., \emph{Upper bounds for Bergman kernels associated to positive line bundles with smooth Hermitian metrics}, unpublished (2013), arXiv:1308.0062. 

\bibitem[Ch13b]{Ch3} Christ, M., \emph{Off-diagonal decay of Bergman kernels: On a conjecture of Zelditch}, unpublished (2013), arXiv:1308.5644. 

\bibitem[DaLiMa06]{DLM}  Dai, X., Liu, K., Ma, X., \emph{On the asymptotic expansion of {B}ergman kernel.} J. Differential Geom. \textbf{72}, 1--41(2006).

\bibitem[Do01]{Do} Donaldson S. K. \emph{Scalar curvature and projective embeddings. I}, J. Differential Geom. \textbf{59}(3), 479--522 (2001).

\bibitem[De98]{Del} Delin, H. \emph{Pointwise estimates for the weighted Bergman projection kernel in $\mathbb C^n$, using a weighted $L^2$ estimate for the $\bar \partial$ equation.} Ann. Inst. Fourier (Grenoble), \textbf{48}(4), 967--997 (1998).

\bibitem[En00]{En}  Engli\v s, M., \emph{The asymptotics of a Laplace integral on a K\"ahler manifold.} Trans. Amer. Math. Soc. \textbf{528}, 1--39 (2000).

\bibitem[Fe74]{Fe}
	Fefferman, C.,
	\emph{The Bergman kernel and biholomorphic mappings of psuedoconvex domains.}
	Invent. Math. \textbf{26}, 1--66 (1974).


\bibitem[HeKeSeXu16]{HKSX} Hezari, H., Kelleher, C., Seto, S., Xu, H., \emph{Asymptotic expansion of the Bergman kernel via perturbation of the Bargmann-Fock model,} Journal of Geometric Analysis, \textbf{26}(4), 2602--2638 (2016). 

\bibitem[Ho66]{Ho} H\"ormander, L., \emph{An introduction to complex analysis in several variables.} D. Van Nostrand Co., Inc., Princeton, N.J.-Toronto, Ont.-London 1966.



\bibitem[KaSc01]{KS} Karabegov, A., Schlichenmaier, M. \emph{Identification of Berezin-Toeplitz quantization}, J. Reine Angew. Math. \textbf{540}, 49--76 (2001).


\bibitem[Lin01]{Lindholm} Lindholm, N. \emph{Sampling in weighted $L^p$ spaces of entire functions in $\mathbb C^n$ and estimates of the
Bergman kernel.} J. Funct. Anal. \textbf{182}(2), 390--426 (2001).

\bibitem[Liu10]{Liu}
	Liu, C.-J.,
	\emph{The asymptotic {T}ian-{Y}au-{Z}elditch expansion on {R}iemann surfaces with constant curvature},
	Taiwanese J. Math., 1665--1675 (2010). 

\bibitem[LiuLu15]{LiuLu1}
	Liu, C.-J., Lu, Z.,
	\emph{Uniform asymptotic expansion on {R}iemann surfaces}. Analysis, complex geometry, and mathematical physics: in honor of {D}uong {H}. {P}hong,
	Contemp. Math. \textbf{644}, 159--173 (2015). 
	
\bibitem[LiuLu16]{LiuLu2}
	Liu, C.-J.,  Lu, Z.,
	\emph{Abstract {B}ergman kernel expansion and its applications},
	Trans. Amer. Math. Soc. \textbf{368}, 1467--1495 (2016). 
 
\bibitem[Lo04]{Loi} Loi, A.,  \emph{The Tian-Yau-Zelditch asymptotic expansion for real analytic K\"ahler metrics.} Int. J. Geom. Methods Mod. Phys. \textbf{1}(3), 253--263 (2004).

\bibitem[Lu00]{Lu}
	Lu, Z.,
 	\emph{On the Lower Order Terms of the Asymptotic Expansion of Tian-Yau-Zelditch},
	American Journal of Mathematics, \textbf{122}(2), 235--273 (2000). 

\bibitem[LuSe17]{LS} Lu, Z.,  Seto S. \emph{Agmon type estimates of the Bergman Kernel for non-compact manifolds}, preprint, 2017.

\bibitem[LuSh15]{LuSh}
	Lu, Z., Shiffman, B.,
	\emph{Asymptotic Expansion of the Off-Diagonal Bergman Kernel on Compact \k Manifolds},
	Journal of Geometric Analysis, \textbf{25}(2), 761--782 (2015). 

\bibitem[LuTi04]{LuTian}
	Lu, Z., Tian, G.,
 	\emph{The log term of the {S}zeg\"o kernel},
	Duke Math. J., \textbf{125}(2), 351--387 (2004).
	
\bibitem[LuZe16]{LuZe}
	Lu, Z., Zelditch, S.,
 	\emph{Szeg\"o kernels and {P}oincar\'e series},
	J. Anal. Math., \textbf{130}, 167--184 (2016).	

\bibitem[MaMa07]{MaMaBook} Ma, X. and  Marinescu, G., \emph{ Holomorphic Morse inequalities and Bergman kernels}, Progress in Math., \textbf{254}, Birkh\"auser, Basel, 2007.

\bibitem[MaMa08]{MaMa}
	Ma, X., Marinescu, G.,
 	\emph{Generalized {B}ergman kernels on symplectic manifolds},
	Adv. Math., \textbf{217}(4), 1756-1815 (2008).


\bibitem [MaMa13]{MaMaOff} Ma, X., Marinescu, G., \emph{Remark on the Off-Diagonal Expansion of the Bergman Kernel on Compact K\"ahler Manifolds}, Communications in Mathematics and Statistics, \textbf{1}(1), 37--41 (2013).


\bibitem[MaMa15]{MaMaAgmon} Ma, X., Marinescu, G. \emph{Exponential estimate for the asymptotics of Bergman kernels.}
Math. Ann. \textbf{362}(3-4), 1327--1347 (2015). 

\bibitem[Se15]{Seto} Seto, S. \emph{On the asymptotic expansion of the Bergman kernel}, Thesis (Ph.D.)-University of California, Irvine. (2015).


\bibitem[ShZe02]{ShZe}
 Shiffman, B., Zelditch, S.,
 \emph{Asymptotics of almost holomorphic sections of ample line bundles on symplectic manifolds}, J. Reine Angew. Math. \textbf{544}, 181-222 (2002). 

\bibitem[Sj82]{Sj} Sj\"ostrand, Singularit\'es analytiques microlocales, Ast\'erisque,  \textbf{95}(1982), 1--166, Soc. Math. France, Paris.

\bibitem[Ti90]{Ti}
	Tian, G.,
	\emph{On a set of polarized \k metrics on algebraic manifolds}, J. Differ. Geom. \textbf{32}(1), 99--130 (1990).

\bibitem[Xu12]{Xu} Xu, H.,
\emph{A closed formula for the asymptotic expansion of the Bergman kernel.}
Comm. Math. Phys. \textbf{314}(3), 555--585 (2012).

\bibitem[YuZh16]{YZ} Yuan, Y., Zhu, J.
\emph{Holomorphic line bundles over a tower of coverings.}
J. Geom. Anal. \textbf{26}(3), 2013--2039 (2016).

\bibitem[Ze98]{Ze1}
	Zelditch, S., \emph{Szeg\"{o} kernels and a theorem of Tian}, Internat. Math. Res. Notices \textbf{6}, 317--331 (1998). 

\bibitem[Ze09]{ZeBookReview} Zelditch, S. \emph{Book review of "Holomorphic Morse inequalities and Bergman kernels" (by Xiaonan Ma and George Marinescu)}. Bulletin of the American Mathematical Society \textbf{46}, 349--361 (2009).

\bibitem[Ze12]{Ze2}
	Zelditch, S.,
 	\emph{Pluri-potential theory on {G}rauert tubes of real analytic {R}iemannian manifolds, {I}}. Spectral geometry, Proc. Sympos. Pure Math. \textbf{84}, 299--339 (2012). 
	
\bibitem[Ze16]{Ze3}  Zelditch, S.,  \emph{Off-diagonal decay of toric Bergman kernels}, Lett. Math. Phys. \textbf{106}(12), 1849--1864 (2016). Volume in Memory of Louis Boutet de Monvel. 



\end{thebibliography}

\end{document}